\documentclass{amsart}

\usepackage{amssymb}
\usepackage{hyperref}
\hypersetup{
    citecolor=blue,
    filecolor=blue,
    linkcolor=blue,
    urlcolor=blue
}

\newtheorem{thm}{Theorem}[section]

\newtheorem{lem}[thm]{Lemma}
\newtheorem{cor}[thm]{Corollary}

\theoremstyle{definition}
\newtheorem{definition}[thm]{Definition}

\theoremstyle{remark}
\newtheorem{remark}[thm]{Remark}

\numberwithin{equation}{section}

\renewcommand{\phi}{\varphi}
\renewcommand{\subset}{\subseteq}		

\title[Free boundary regularity in obstacle problems]{Free boundary regularity in obstacle problems with a degenerate forcing term}
\author{Yong Liu}
\thanks{The paper is supported by Hunan Provincial Innovation Foundation For Postgraduate [CX20230405].   }
\address{School of Mathematics, Hunan University, Changsha, 410082, China}
\email{yongliu93@hnu.edu.cn}
\keywords{obstacle problem, free boundary, epiperimetric inequality, degenerate.}
\subjclass[2010]{35R35.}

\begin{document}

\maketitle

\begin{abstract}
In this paper, we consider the properties of a special free boundary point in the following obstacle problem
 \begin{equation*}
\Delta u=f(x)\chi_{\{u>0\}} \textup{ in $B_1$},
\end{equation*}
where  $f(x)=|x|$ is a degenerate forcing term and $B_1\subset \mathbb{R}^2$.

The key challenge stems from the degeneracy of $f(x)$, which leads to a  more complex structure of the free boundary compared to the classical setting.  To analyze it, we introduce the epiperimetric inequality developed by Weiss (Invent Math 138:23-50, 1999). Although this powerful tool was firstly introduced for the classical obstacle problem characterized by $\inf_{B_1} f(x)>0$ (see \eqref{f2}), it also proves effective in our degenerate setting.
This allows us to first obtain the decay rate of the Weiss energy for all blow-ups at the origin, which in turn implies the uniqueness of the blow-up profiles. With this uniqueness established, we then prove a very weak directional monotonicity
properties satisfied by the solutions. This finally yields the regularity of the free boundary at the origin if the origin is a regular point.
\end{abstract}

\tableofcontents

\section{Introduction}

The classical obstacle problem describes the height of an elastic membrane being
pushed towards an impenetrable obstacle. Mathematically, it can be written as the minimization problem
\begin{equation*}
\textrm{min}\quad \frac{1}{2}\int_{\Omega}|\nabla v|^2dx \quad \textrm{among all functions}\quad v\ge\varphi,
\end{equation*}
where the minimization is subject to boundary conditions $v|_{\partial \Omega}=g$, and the ``obstacle" $\varphi$ meets  $\varphi|_{\partial \Omega}<g$. If we use $u$ to represent the height of the minimizer minus the height of the obstacle, then one can see that
\begin{equation}\label{f1}
\left\{
\begin{split}
&\Delta u=f(x)\chi_{\{u>0\}} \textup{ in $B_1$},\\
&u\ge0 \textup{ in $B_1$},\\
\end{split}
\right.
\end{equation}
where $f(x):=-\Delta\varphi$. For the sake of studying the regularity of the free boundary $\partial \{u=0\}$, as already observed in \cite{Caf1,Caf2}, it is necessary to impose the nondegeneracy condition
\begin{equation}\label{f2}
0<\lambda\le f(x).
\end{equation}
Here, $\lambda$ is a constant. Since this type of problem and its variants appear extensively in fluid mechanics, elasticity, probability, finance, biology, and industry, etc., many researchers have focused their efforts on this field over the past several decades (see \cite{Dan,Pet,Xa}).

As shown by Br\'{e}zis and Kinderlehrer \cite{Br}, the problem has a unique solution which is $C^{1,1}_{loc}$. In the dimension $n=2$, Sakai \cite{Sa1,Sa2} gave some structural results for the free boundary provided $f$ is real analytic. For the higher dimension, Caffarelli's seminal work \cite{Caf1,Caf2} implies that the free boundary $\partial\{u = 0\}$ splits into the regular points which behaves like a half-space solution $\frac{1}{2}\max\{x\cdot e,0 \}^2$ with some $e\in \mathbb{S}^{n-1}$ and singular points which behaves like a nonnegative homogeneous quadratic polynomial $\frac{1}{2}x\cdot Ax$ with some positive semidefinite matrices $A$. Meanwhile, he also proved that near the regular points, the free boundary locally is a $C^1$ hypersurface (and then Kinderlehrer and Nirenberg showed that the free boundary is smooth and analytic \cite{Kin2}). Conversely, the set of singular points can be covered  locally  by a $C^1$ hypersurface. However,
Schaeffer showed that the free boundary can form cusps near the singular point, which implies that
the singular set can be rather wild \cite{Sch}. It is worth noting that Caffarelli's robust method \cite{Caf1,Caf2}
has been extended to other fields such as the obstacle problem for integro-differential operators \cite{Bar,Caf3}, the thin obstacle
problem \cite{Ath,Savin1}, and fully nonlinear obstacle problem \cite{Lee,Xa1}, etc.. However, Weiss
provided a new perspective to study the free boundary. He introduced an epiperimetric inequality for
 the Weiss energy of the classical obstacle problem in the celebrated work \cite{Wei1}. In particular, he improved the $C^1$ regularity to
$C^{1,\alpha}$ at the regular point in two dimensions. This robust method also has been extended to many different settings \cite{Ander,Colo0,Colo1,Eng,Spol11}.
Recently, in a pioneering work, by introduced the Almgren frequency formula for classical obstacle problems, Figalli and Serra  proved
a more refined structure of the singular set \cite{Fig1}. By developing a novel approach, Savin and Yu showed the similar results for the fully nonlinear obstacle problem \cite{Savin2,Savin3}.
For more results on singularities, we refer to \cite{Caff12,Gar,Mon1,Xa}.

In recent years, the obstacle problem with a degenerate force term has increasingly attracted considerable attention.
Under this context, we just have
\begin{equation}\label{f3}
0\le f(x),
\end{equation}
which implies that $f(x)$ may degenerate at certain points. Apart from its various industrial applications,
mathematically, as a generalization of the classical obstacle problem, it is also very interesting to study the regularity
of the free boundary. However,  as we previously indicated, once the force term $f(x)$ becomes degenerate,
the analysis of the problem becomes considerably more challenging in contrast to the classical obstacle problem.
Significantly, Yeressian in \cite{Yere1} first provided a preliminary attempt. He established the non-degeneracy
 property and the optimal growth estimate for a class of functions satisfying
\begin{equation}\label{f4}
\lambda|(x_1,..,x_p)|^\alpha\le f(x)\le \Lambda|(x_1,..,x_p)|^\alpha,
\end{equation}
where $\lambda$, $\Lambda$, $\alpha$ and $p\le n$ are positive constants. Furthermore,  Yeressian \cite{Yere2}  considered the following
problem in two dimensions
\begin{equation}\label{f5}
\left\{
\begin{split}
&\Delta u=|x_1|\chi_{\{u>0\}} \textup{ in $B_1$},\\
&u\ge0 \textup{ in $B_1$}.\\
\end{split}
\right.
\end{equation}
On one hand, he proved the free boundary is locally a graph in a neighborhood of the point with the lowest Weiss balanced energy. On the other hand,  near to a degenerate point, he showed that the free boundary is not $C^{1,\alpha}$ for any $0<\alpha<1$ under some  suitable assumptions.

While these results are exciting, research on obstacle problems governed by degenerate forcing terms remains very limited. And our understanding of how the degeneration mechanism affects the free boundary is still unclear. Hence, inspired  by the above work, in this paper, we study the following problem
\begin{equation}\label{f6}
\left\{
\begin{split}
&\Delta u=|x|\chi_{\{u>0\}} \textup{ in $B_1$},\\
&u\ge0 \textup{ in $B_1$}.\\
\end{split}
\right.
\end{equation}
In contrast to classical obstacle problem \eqref{f1}, although the function $f(x)=|x|$ degenerates only at the origin in our setting, the free boundaries exhibit significant differences. In the subsequent analysis, we will also observe certain novel properties of \eqref{f6} that
deviate from their counterparts in the classical obstacle problem.

From a geometric perspective,  both the classical obstacle problem and its many variants share a important feature: they admit a classical Caffarelli's dichotomy theorem \cite{Caf1,Caf2}. Furthermore, Weiss \cite{Wei1} provided a novel interpretation from an energy perspective, stating that free boundary points possess two levels of Weiss energy: all regular points share the lowest energy level, while singular points share a higher energy level. Within the same energy level, despite the potential variations in the geometric characteristics of the free boundary, their values of Weiss energy remain equal. However, from an energy perspective, our setting is considerably more complex. Even when restricting our analysis to two dimensions, additional energy levels emerge. Moreover, one of the core challenges in free boundary problems is proving the uniqueness of blow-up limits. In our problem, when the blow-up limit is no longer a quadratic homogeneous polynomial, or even no longer a polynomial, it brings new and significant challenges to the study of free boundaries.

Before stating our main theorem, we first introduce the Weiss energy. Suppose $x_0\in \partial \{ u=0 \}$, then we can easily find that the Weiss energy for \eqref{f6}  is given by
\begin{equation}\label{f7}
\Phi(r,u)=\frac{1}{r}\int_{B_r(x_0)}\left(|\nabla u|^2+2|x| u^+\right)dx-\frac{3}{r^7}\int_{\partial B_r(x_0)} u^2d\mathcal{H}.
\end{equation}
If no confusion is caused, we may denote $\Phi(r,u)$ by $\Phi(r)$.   In order to obtain a  decay estimate of the energy, we consider the following  energy functional \begin{equation}\label{int1}
M(v):=\int_{B_1}\left(|\nabla v|^2+2|x| v^+ \right)dx-3\int_{\partial B_1} v^2d\mathcal{H},
\end{equation}
among all $v\in W^{1,2}(B_1)$. First, we state an important property of $M(v)$.
\begin{thm} [Epiperimetric inequality] \label{TH1}
There exist $\kappa\in(0,1)$ and $\delta\in(0,1)$ such that the following holds for every  non-negative function $c\in W^{1,2}(B_1)$ that is homogeneous of degree 3, if
\begin{equation*}
||c-h_{\theta_1} ||_{W^{1,2}(B_1)}\le \delta
\end{equation*}
for some $h_{\theta_1}\in\mathbb{H}$ (defined in Definition \ref{Def3}), then there exists a function $v\in W^{1,2}(B_1)$ satisfying $v=c$ on $\partial B_1$  and
\begin{equation}\label{cc3}
M(v)\le (1-\kappa)M(c)+\kappa M(h_{\theta_1}),
\end{equation}
\end{thm}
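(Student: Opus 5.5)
The proof argues by contradiction and compactness, in the spirit of Weiss \cite{Wei1}. Suppose the statement fails. Then there are $\kappa_k\to0$, $\delta_k\to0$, nonnegative $3$-homogeneous $c_k$ and $h_k\in\mathbb{H}$ with $\|c_k-h_k\|_{W^{1,2}(B_1)}\le\delta_k$, such that every $v$ with $v=c_k$ on $\partial B_1$ satisfies
\[
M(v)>(1-\kappa_k)M(c_k)+\kappa_k M(h_k).
\]
I will use that $\mathbb{H}$ is (I expect, from Definition \ref{Def3}) the one-parameter family of rotations of a single profile $h_{\theta_1}=r^3g(\theta-\theta_1)$. Here $g\ge0$ solves $g''+9g=1$ on an arc $I$ with $g=g'=0$ at its endpoints, for instance $g=\frac19(1-\cos 3\theta)$ on an arc of length $2\pi/3$. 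Since rotations preserve $M$, I first choose $h_k$ to minimize $\|c_k-h\|$ over $\mathbb{H}$, so that $c_k-h_k$ is orthogonal to the tangent direction $\partial_{\theta_1}h_{\theta_1}$ (to leading order). Set $\varepsilon_k=\|c_k-h_k\|\to0$ and $w_k=(c_k-h_k)/\varepsilon_k$. These are $3$-homogeneous with unit norm, so a subsequence converges weakly in $W^{1,2}$ and strongly in $L^2(\partial B_1)$ to some $w$.

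\textbf{Step 1 (reduction to boundary data).} For $3$-homogeneous $c=r^3\phi(\theta)$ in two dimensions, the integrals reduce to one-dimensional ones:
\[
\int_{B_1}|\nabla c|^2=\frac16\int_{\partial B_1}(9\phi^2+\phi'^2),\qquad
\int_{B_1}2|x|c^+=\frac25\int_{\partial B_1}\phi^+ .
\]
Hence $M(c)$ is an explicit functional $E(\phi)=\frac16\int(\phi'^2-9\phi^2)+\frac25\int\phi^+$, up to checking constants. Expanding $\phi=g+\varepsilon\psi$ and using the Euler--Lagrange equation of $g$, $M(c)-M(h)$ splits into two pieces:
\begin{itemize}
\item a quadratic part $\frac{\varepsilon^2}{6}\int_I(\psi'^2-9\psi^2)$ on the positivity arc $I$;
\item a linear, strictly positive contribution $\frac25\varepsilon\int_{\partial B_1\setminus I}\psi$ on the complement, where $\psi\ge0$ because $c\ge0$ and $h=0$ there.
\end{itemize}

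\textbf{Step 2 (competitor).} I expand $\psi$ on $I$ in Dirichlet eigenfunctions $\sin(j\pi(\theta-a)/|I|)$ of $-\partial_\theta^2$, with eigenvalues $\mu_j=(j\pi/|I|)^2$. Each mode with $\mu_j>9$ is extended by $r^{\sqrt{\mu_j}}$ instead of $r^3$. Lowering or raising the homogeneity strictly lowers the Dirichlet-minus-boundary energy by a factor of order $(\sqrt{\mu_j}-3)^2/(\sqrt{\mu_j}+3)$. The modes with $\mu_j\le9$ are exactly the kernel and negative directions. For $|I|=2\pi/3$ one has $\mu_1=9/4$ and $\mu_2=9$, and I expect the $\mu_2$ mode to be, up to the positive part, the infinitesimal rotation $\partial_{\theta_1}h$, which the choice of $h_k$ excludes. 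On the complement $\partial B_1\setminus I$ I use a cutoff extension that decays faster into the interior, say $r^{\beta}$ with $\beta>3$. This makes the linear gain from $\frac25\int\psi$ survive at order $\varepsilon$, while the Dirichlet cost is only $O(\varepsilon^2)$. The competitor is $v_k=h_k+\varepsilon_k\tilde w_k$.

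\textbf{Step 3 (contradiction).} Comparing $M(v_k)$ with $(1-\kappa)M(c_k)+\kappa M(h_k)$ and dividing by $\varepsilon_k^2$ (or by $\varepsilon_k$ when the complementary mass dominates), I pass to the limit. This forces $w$ to lie in the span of the neutral modes and to vanish off $I$. Orthogonality then gives $w=0$, and I expect this to contradict $\|w_k\|=1$ by upgrading to strong convergence, using that the energy gap controls $\|w_k'\|_{L^2}$.

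\textbf{Main obstacle.} I expect the hardest step to be handling the nonsmooth term $v^+$ together with the unstable mode $\mu_1<9$. The competitor $v_k$ may become negative where $h_k$ is small near the endpoints of $I$, so $\int|x|v^+$ is not quadratic there. I would first try to control the negative part by the linear term, as in Weiss's truncation argument, and to show that the $\mu_1$ mode is absent because its coefficient is forced to be $O(\varepsilon)$ by nonnegativity of $c_k$. If that fails, the fallback is to absorb it by adjusting the arc $I$, that is, by perturbing the sector of $h$.
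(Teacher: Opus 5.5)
Your Step~3 does not close, and it fails at exactly the two points you list as the main obstacle.

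(i) The $\mu_1$ mode is not eliminated. Your competitor keeps every mode with $\mu_j\le 9$ at homogeneity $3$. After dividing by $\varepsilon_k^2$ and letting $\kappa_k\to0$, the comparison therefore says nothing about the $\mu_1=9/4$ component $r^3\sin(3\theta/2)\chi_I$ of $w$. Orthogonality to $\partial_{\theta_1}h$ does not remove it either, since this function is $W^{1,2}(B_1)$-orthogonal to $r^3\sin 3\theta\,\chi_I$. Your proposed remedy is false. The function $h_0+\varepsilon r^3\sin(3\theta/2)\chi_{\{0<\theta<2\pi/3\}}$ is nonnegative, $3$-homogeneous and $O(\varepsilon)$-close to $h_0$, yet its normalized $\mu_1$-coefficient is of order one. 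The fallback of moving the arc leaves $\mathbb{H}$, whose arcs all have length exactly $2\pi/3$. (The mode is actually harmless because it lowers $M$: taking $v=c_k$ in the contradiction hypothesis gives $M(c_k)>M(h_k)$, which bounds its negative contribution by the high-mode and off-arc parts. Alternatively, extend it by $r^{3/2}$ as well.)

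(ii) The sign constraint is not handled. Near the edge rays $h_0\approx r^3\theta^2/2$ vanishes quadratically, while the Dirichlet modes vanish only linearly. Moreover $\psi_k|_I$ need not lie in $H^1_0(I)$, since $c_k$ may be positive on the rays. With only $H^1$ control, the penalty $\int_{\{h_k>0\}}2|x|v_k^-$ is a priori of order $\varepsilon_k^2$, the same order as the gain you want. The linear term lives on $\partial B_1\setminus I$, so it cannot absorb a negative part created inside the sector. Separately, $\int_{B_1}2|x|c^+=\frac13\int_{\partial B_1}\phi^+$, not $\frac25$; only with $\frac13$ is the Euler--Lagrange equation $g''+9g=1$.

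The missing idea, which the paper takes from Weiss, is to avoid a global spectral competitor altogether and use only competitors that are automatically nonnegative.
\begin{itemize}
\item With radial $\eta$, the competitor $v=(1-\eta)c_m+\eta h_0$ yields $\int_{\{h_0=0\}}c_m\le C\delta_m^2$, so $w=0$ off the sector.
\item With $\eta$ compactly supported in $\{h_0>0\}$, where $h_0$ is bounded below, $v=(1-\eta)c_m+\eta(h_0+\delta_m\Psi)$ is nonnegative for large $m$. This shows that $w$ is harmonic in the open sector.
\item Then $3$-homogeneity gives $w=r^3\phi(\theta)$ with $\phi''+9\phi=0$ on $(0,2\pi/3)$ and $\phi=0$ at the endpoints, i.e.\ $w=B\,r^3\sin3\theta$. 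Harmonicity plus degree-$3$ homogeneity pins the eigenvalue at $9$, so neither the $\mu_1$ mode nor the high modes can appear, and no mode-by-mode bookkeeping is needed.
\item The best-approximation property, via one-sided difference quotients in $\theta_1$, gives $B=0$.
\item Strong convergence follows from the radial cutoff competitor combined with the homogeneity identity $\int_{B_{1/2}}|\nabla w_m|^2=\frac1{64}\int_{B_1}|\nabla w_m|^2$.
\end{itemize}
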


\begin{remark}\label{Th1.1}
We generalize the Epiperimetric inequality from the classical obstacle problem to the obstacle problem with a degenerate forcing term.
\end{remark}

\begin{thm}\label{TH1.2}
All nontrivial homogeneous global solutions fall exclusively into the following categories:
\begin{equation}\label{ff01}
 h_{\theta_1}(x)=\left\{ \frac{|x|^3}{9}-\frac{\cos3\theta_1}{9}({x_1}^3-3x_1{x_2}^2 )- \frac{\sin3\theta_1}{9}( 3{x_1}^2x_2-{x_2}^3  ) \right\}\chi_{\{ \theta_1<\theta<\theta_2 \}},
\end{equation}
and
\begin{equation}\label{ff02}
\begin{split}
h_{\widetilde{\theta_1}}(x)&=\left\{ \frac{|x|^3}{9}-\frac{\cos3\theta_1}{9}({x_1}^3-3x_1{x_2}^2 )- \frac{\sin3\theta_1}{9}( 3{x_1}^2x_2-{x_2}^3  ) \right\}\chi_{\{ \theta_1<\theta<\theta_2 \}},\\
&\quad+\left\{ \frac{|x|^3}{9}-\frac{\cos3\theta_1}{9}({x_1}^3-3x_1{x_2}^2 )- \frac{\sin3\theta_1}{9}( 3{x_1}^2x_2-{x_2}^3  ) \right\}\chi_{\{ \theta_2+\varsigma<\theta<\theta_2+\frac{2\pi}{3}+\varsigma \}},
\end{split}
\end{equation}
and
\begin{equation}\label{ff03}
 \begin{split}
h_{\underline{\theta_1}}(x)&=\left\{ \frac{|x|^3}{9}-\frac{\cos3\theta_1}{9}({x_1}^3-3x_1{x_2}^2 )- \frac{\sin3\theta_1}{9}( 3{x_1}^2x_2-{x_2}^3  ) \right\}\chi_{\{ \theta_1<\theta<\theta_2 \}},\\
&\quad+\left\{ \frac{|x|^3}{9}-\frac{\cos3\theta_1}{9}({x_1}^3-3x_1{x_2}^2 )- \frac{\sin3\theta_1}{9}( 3{x_1}^2x_2-{x_2}^3  ) \right\}\chi_{\{ \theta_2<\theta<\theta_2+\frac{2\pi}{3} \}}\\
&\quad+\left\{ \frac{|x|^3}{9}-\frac{\cos3\theta_1}{9}({x_1}^3-3x_1{x_2}^2 )- \frac{\sin3\theta_1}{9}( 3{x_1}^2x_2-{x_2}^3  ) \right\}\chi_{\{ \theta_2+\frac{2\pi}{3}<\theta<\theta_2+\frac{4\pi}{3} \}},
\end{split}
\end{equation}
as well as
\begin{equation}\label{ff04}
 h_{a,b}(x)=\frac{|x|^3}{9}+a({x_1}^3-3x_1{x_2}^2 )+b( 3{x_1}^2x_2-{x_2}^3  ),
\end{equation}
where $\theta_1\in[0,2\pi)$, $\theta_2=\theta_1+\frac{2\pi}{3}$ and $\varsigma\in[0,\frac{2\pi}{3}]$ as well as  constants $a,b$ satisfy $\sqrt{a^2+b^2}<\frac{1}{9}$. Here, in order to simplify the  notational representation of the positivity set, we use $\{ \theta_1<\theta<\theta_2 \}$ to denote the positive set  $\{x: u(x)>0, \, and \, the \, corresponding \, polar \, angle \,\ \theta\in ( \theta_1,\theta_2)\}$.

\end{thm}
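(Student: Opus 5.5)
The classification reduces to an ODE on the unit circle. Let $u$ be a nontrivial homogeneous global solution of \eqref{f6}. By the optimal growth estimate for \eqref{f4} with $\alpha=1$, any blow-up is homogeneous of degree $3$, so I take $u(x)=r^3 g(\theta)$ with $g\ge 0$ and $2\pi$-periodic. Since $u\in C^{1,1}_{loc}$ and $|x|\chi_{\{u>0\}}$ is bounded, $u\in C^{1,\beta}$, hence $g\in C^1(\mathbb S^1)$. The equation $\Delta u=|x|\chi_{\{u>0\}}$ becomes
\[
g''+9g=\chi_{\{g>0\}}\quad\text{on }\mathbb S^1 .
\]
I will first treat the case $\{g>0\}=\mathbb S^1$, and then the case where $\{g>0\}$ is a proper open subset of $\mathbb S^1$, hence a disjoint union of open arcs.

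\textbf{Case 1: $\{g>0\}=\mathbb S^1$.} Here $g=\tfrac19+A\cos3\theta+B\sin3\theta$. In Cartesian form this is $u=\tfrac{|x|^3}{9}+a(x_1^3-3x_1x_2^2)+b(3x_1^2x_2-x_2^3)$, with $r^3\cos3\theta=x_1^3-3x_1x_2^2$ and $r^3\sin3\theta=3x_1^2x_2-x_2^3$. Positivity everywhere is equivalent to $\sqrt{a^2+b^2}<\tfrac19$. In the borderline case $\sqrt{a^2+b^2}=\tfrac19$, $g$ has zeros but no interval of zeros, so $\{g=0\}$ is finite and $u$ solves $\Delta u=|x|$ a.e. This is the same family, which gives \eqref{ff04} (with the closure of the parameter range).

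\textbf{Case 2: arcs.} On each component arc $(\alpha,\beta)$ of $\{g>0\}$, $g=\tfrac19+A\cos3\theta+B\sin3\theta$ with $g(\alpha)=g'(\alpha)=0$ and $g(\beta)=g'(\beta)=0$, using $C^1$ matching with the zero set. The conditions at $\alpha$ force $g=\tfrac19(1-\cos3(\theta-\alpha))$. This is positive exactly on $(\alpha,\alpha+\tfrac{2\pi}{3})$ and vanishes with zero derivative at $\beta=\alpha+\tfrac{2\pi}3$. So every positivity arc has length exactly $\tfrac{2\pi}3$, and the solution there equals the expression in \eqref{ff01} with $\theta_1=\alpha$. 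Expanding $\cos3(\theta-\theta_1)$ recovers the displayed Cartesian formula.

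\textbf{Gluing the arcs.} Gluing is automatic because $g$ and $g'$ vanish at both endpoints of each arc, and on the zero set the equation holds trivially. Disjoint arcs of length $\tfrac{2\pi}3$ fit in the circle at most three times.
\begin{itemize}
\item One arc gives \eqref{ff01}.
\item Two arcs give \eqref{ff02}, with the second arc starting at $\theta_2+\varsigma$ and $\varsigma\in[0,\tfrac{2\pi}3]$ the gap.
\item Three arcs must tile the circle and give \eqref{ff03}.
\end{itemize}
In \eqref{ff02} and \eqref{ff03} the formula on each arc is $\tfrac19(1-\cos3(\theta-\text{start}))$. Since $\cos3\theta$ is $\tfrac{2\pi}{3}$-periodic, shifts by multiples of $\tfrac{2\pi}3$ leave the same polynomial expression. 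This explains why \eqref{ff03} has the same expression on all three arcs. In \eqref{ff02} the second expression should correspondingly be shifted by $\varsigma$.

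\textbf{Expected obstacle.} The main subtle step is justifying the reduction: that $g$ is $C^1$ across the boundary of $\{g>0\}$, and that the distributional equation on the zero set imposes nothing more. Both follow from $u\in C^{1,1}_{loc}$ and from $\Delta u=0$ a.e. on $\{u=0\}$. One must also confirm that every configuration listed is genuinely a solution in $W^{2,\infty}$, which follows from the $C^1$ gluing.
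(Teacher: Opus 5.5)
Your proposal is correct and follows essentially the paper's argument in Lemmas \ref{L5.1}--\ref{L5.3}: on each positivity cone $u-\frac{|x|^3}{9}$ is a degree-$3$ harmonic function, the double condition $u=\partial_\theta u=0$ at the edges forces every arc to have length exactly $\frac{2\pi}{3}$, and one then counts arcs. The only difference is packaging (the paper passes through the no-sign problem with $\Omega(u)=\{u=|\nabla u|=0\}^c$ and encodes the edge conditions in the complex quantity $\varpi_u=u-\frac{i}{3}\partial_\theta u$, while you work directly with $\{g>0\}$ and $g''+9g=\chi_{\{g>0\}}$, which is equivalent since $u\ge0$); your correction that the second arc in \eqref{ff02} must carry $\cos3(\theta-\theta_2-\varsigma)$ agrees with the paper's Lemma \ref{L5.2}, and the borderline case $\sqrt{a^2+b^2}=\frac19$ is exactly \eqref{ff03}, so the strict inequality in \eqref{ff04} needs no closure.
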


\begin{remark}\label{R1.31} In the classical obstacle problem, the blow-up limit is a non-negative homogeneous quadratic polynomial-specifically, a half-space solution or a positive semi-definite quadratic form. Classical theory tells us that quadratic forms possess a well-behaved algebraic structure; as demonstrated in the seminal work of Caffarelli \cite{Caf1,Caf2} and  Figalli-Serra \cite{Fig1}, one can discuss related favorable properties across different dimensions. In our setting, however, the blow-up limit corresponds to a nonnegative third-order homogeneous global function. In practice, such function exhibit more complex geometric characteristics and lack convexity. These features introduce substantial difficulties in understanding the free boundary in this class of problems.
\end{remark}

\begin{thm}[Energy decay and uniqueness]\label{TH1.2un}  Let $0$ be a regular free boundary point, and assume the epiperimetric inequality holds with $\kappa\in(0,1)$ for every homogeneous function $c_r(x):=|x|^3u_r(\frac{x}{|x|})$ of degree $3$ provided $0<r\le r_0<1$. Let $u_0$ denote an arbitrary blowup profile of $u_r$ at $0$. Then we have
\begin{equation}\label{uni1}
|\Phi(r)-\Phi(0)|\le |\Phi(r_0)-\Phi(0)|\left( \frac{r}{r_0} \right)^{\frac{6\kappa}{1-\kappa}},
\end{equation}
and there is a constant $C(\kappa)>0$ such that
\begin{equation}\label{uni2}
\int_{\partial B_1}\left| \frac{u(rx)}{r^3}-u_0\right|d\mathcal{H}\le C(\kappa) r^{\frac{3\kappa}{1-\kappa}}
\end{equation}
for $r\in(0,r_0/2)$, and hence $u_0$ is the unique blowup profile at $0$. Here $u_r$ is defined in Definition \ref{Def2}.
\end{thm}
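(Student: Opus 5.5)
We follow Weiss' scheme \cite{Wei1}, adapted to homogeneity degree $3$. Write $u_r(x)=u(rx)/r^3$, so that $\Phi(r,u)=M(u_r)$. The first step is to compute the derivative of $\Phi$ by scaling. Since $\Delta u=|x|\chi_{\{u>0\}}$ and $|rx|=r|x|$, each $u_r$ again solves \eqref{f6}, and the standard computation gives
\begin{equation*}
\Phi'(r)=\frac{2}{r}\int_{\partial B_1}\Big(\partial_\nu u_r-3u_r\Big)^2d\mathcal{H}\;\ge\;0 .
\end{equation*}
We also need the relation between $\Phi$ and the homogeneous extension $c_r(x)=|x|^3u_r(x/|x|)$. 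Differentiating $M(u_r)$ directly and using the Euler-type identity for degree-$3$ functions (the radial part of $|\nabla c_r|^2$ and the $2|x|c_r^+$ term integrate radially with weights $r^{4}$ and $r^{5}$ over $[0,1]$) yields
\begin{equation*}
\Phi'(r)\ge \frac{6}{r}\big(M(c_r)-M(u_r)\big).
\end{equation*}
This comes from writing $M(c_r)=\frac{1}{7}\int_{\partial B_1}(|\nabla_\theta u_r|^2+9u_r^2+2u_r^+)-3\int_{\partial B_1}u_r^2$ and comparing it with the boundary integral form of $\frac{d}{dr}M(u_r)$, via the inequality $a^2+b^2\ge 2ab$ applied to the tangential and normal terms. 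We will check the constant $6=2\cdot 3$, which matches the degree of homogeneity and the exponent in \eqref{uni1}.

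The second step applies the epiperimetric inequality. By hypothesis there is $v$ with $v=c_r$ on $\partial B_1$ and $M(v)\le(1-\kappa)M(c_r)+\kappa M(h)$. Here $h$ is the blow-up class element, and $M(h)=\Phi(0)$ because all regular blow-ups share the lowest energy. Since $u_r$ minimizes $M$ among functions with its own boundary data (the constraint $u\ge0$ is automatic for the functional with $v^+$), we get $M(u_r)\le M(v)$. Combining,
\begin{equation*}
M(u_r)-\Phi(0)\le(1-\kappa)\big(M(c_r)-\Phi(0)\big),
\end{equation*}
and therefore
\begin{equation*}
\Phi'(r)\ge\frac{6}{r}\,\frac{\kappa}{1-\kappa}\big(\Phi(r)-\Phi(0)\big).
\end{equation*}
Since $\Phi(r)-\Phi(0)\ge0$ by monotonicity, integrating $\frac{d}{dr}\log(\Phi(r)-\Phi(0))\ge \frac{6\kappa}{(1-\kappa)r}$ from $r$ to $r_0$ gives \eqref{uni1}.

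The third step proves the convergence rate. For $\rho<\sigma$ we have
\begin{equation*}
\int_{\partial B_1}|u_\sigma-u_\rho|\le\int_\rho^\sigma\int_{\partial B_1}\frac{1}{t}|\partial_\nu u_t-3u_t|\,dt .
\end{equation*}
By Cauchy--Schwarz in $\theta$ and the identity for $\Phi'$, this is at most $C\int_\rho^\sigma t^{-1/2}\Phi'(t)^{1/2}dt$. On dyadic intervals $[2^{-k-1}\sigma',2^{-k}\sigma']$ we apply Cauchy--Schwarz again to get the bound $C(\Phi(2^{-k}\sigma')-\Phi(0))^{1/2}\le C(2^{-k}\sigma'/r_0)^{3\kappa/(1-\kappa)}$. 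Summing this geometric series gives $\int_{\partial B_1}|u_\sigma-u_\rho|\le C(\kappa)\sigma^{3\kappa/(1-\kappa)}$ for $\sigma<r_0/2$. Letting $\rho\to0$ along the subsequence defining $u_0$ (using $C^{1,\alpha}$ compactness of $u_r$ from the $C^{1,1}$/optimal growth bounds) yields \eqref{uni2}. Since the right-hand side is independent of the subsequence, any two blow-ups agree on $\partial B_1$. Both are homogeneous of degree $3$, since $\Phi$ is constant on blow-ups, so they coincide.

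The main obstacle we expect is the second inequality in the first step, $\Phi'\ge\frac6r(M(c_r)-M(u_r))$, with the correct constants. The weight $|x|$ scales differently from the classical constant forcing, so the radial integrals of the $u^+$ term must be verified carefully. We would first redo Weiss' computation line by line with degree $3$ and weight $r^{5}$, and confirm that the cross terms close up exactly.
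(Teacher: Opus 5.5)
Your proposal is correct and follows the paper's proof essentially step for step. It uses the same scaling derivative of $\Phi$, the comparison $\Phi'(r)\ge\frac6r\big(M(c_r)-M(u_r)\big)$ with the degree-$3$ homogeneous extension, the epiperimetric inequality combined with minimality of $u_r$ and $M(h)=\Phi(0+)=\frac{\pi}{81}$, the resulting ODE for $\Phi(r)-\Phi(0+)$, and the dyadic Cauchy--Schwarz summation. The one slip is in the step you flagged: $|\nabla c_r|^2$ and $|x|c_r^+$ are both homogeneous of degree $4$, so both carry the radial weight $\int_0^1\rho^5\,d\rho=\frac16$ (not $\frac17$). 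Hence $M(c_r)=\frac16\int_{\partial B_1}\big(|\nabla_\theta u_r|^2+9u_r^2+2u_r^+\big)d\mathcal{H}-3\int_{\partial B_1}u_r^2\,d\mathcal{H}$, and completing the square in the normal derivative (not between tangential and normal parts), $(\partial_\nu u_r)^2-6u_r\partial_\nu u_r=(\partial_\nu u_r-3u_r)^2-9u_r^2$, gives the exact identity $\Phi'(r)=\frac6r\big(M(c_r)-M(u_r)\big)+\frac1r\int_{\partial B_1}(\partial_\nu u_r-3u_r)^2\,d\mathcal{H}$, which is what the paper derives before discarding the square.
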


Now, we give the important result regarding  the free boundary as a graph near regular points.
\begin{thm}\label{TH1.4}
Let $u$ be a solution to \eqref{f6} and suppose $0$ is a regular point. Then there exist constants $C, \varrho>0$ and a function $g$ such that, after a suitable rotation of coordinate axes,
\begin{equation}\label{f8}
\Gamma(u)\cap B_{\varrho}\cap \{|x_1|< \frac{\varrho}{4}\}=\{(x_1,g(x_1)):|x_1|< \frac{\varrho}{4} \},
\end{equation}
and
\begin{equation*}
|g(x_1)-\frac{\sqrt{3}}{3}x_1|\le C||u_{8|x_1|-u^*} ||^{\frac{1}{3}}_{L^\infty (B_1)}|x_1|.
\end{equation*}
as well as
\begin{equation*}
|g'(x_1)-\frac{\sqrt{3}}{3}|\le C||u_{8|x_1|-u^*} ||^{\frac{1}{3}}_{L^\infty (B_1)}|x_1|.
\end{equation*}
where
\begin{equation}\label{f9}
 g\in C^0((-\frac{\varrho}{4},\frac{\varrho}{4}))\cap C^1 ((-\frac{\varrho}{4},\frac{\varrho}{4})\verb+\+\{0\}).
\end{equation}
\end{thm}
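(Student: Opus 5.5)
Throughout, $u^*$ denotes the blow-up profile at $0$, which is unique by Theorem \ref{TH1.2un}. Since $0$ is a regular point, $u^*$ is of the type $h_{\theta_1}$ in \eqref{ff01}, with positivity set the sector $\{\theta_1<\theta<\theta_2\}$ of opening $2\pi/3$. After a rotation I place this sector so that its boundary ray through the first quadrant makes angle $\pi/6$ with the $x_1$-axis, i.e.\ it lies on the line $x_2=\frac{\sqrt3}{3}x_1$. The expected free boundary near $0$ is then the broken line made of two rays meeting at $0$. The graph description \eqref{f8} is therefore taken in a cone around the ray for $x_1>0$ and around the opposite ray for $x_1<0$. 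These two pieces are glued continuously at $0$, which is why \eqref{f9} only asks for continuity at $0$ and $C^1$ away from it.

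\textbf{Step 1: $L^\infty$ closeness of rescalings.} The estimate \eqref{uni2} gives closeness in $L^1(\partial B_1)$, with rate. I would combine it with the $C^{1,1}$-type bounds $|D^2u_r|\le C$ on $B_2$, which follow from the optimal growth $u\le C|x|^3$. Interpolation then upgrades this to $\|u_r-u^*\|_{L^\infty(B_1)}\to0$ and $\|\nabla u_r-\nabla u^*\|_{L^\infty(B_1)}\to0$, with control by $\epsilon_r:=\|u_r-u^*\|_{L^\infty(B_1)}$. The quantities $\|u_{8|x_1|}-u^*\|$ in the statement come from this step.

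\textbf{Step 2: directional monotonicity.} Fix a direction $e$ transversal to the free ray, in the cone of directions along which $u^*$ is increasing near that ray (for instance $e$ pointing into the positive sector). I would show $\partial_e u_r\ge -C\epsilon_r^{1/3}$-type bounds and then $\partial_e u_r\ge0$ in $B_1\setminus B_{1/2}$ near the free boundary.

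The argument is Caffarelli's. Let $w=\partial_e u_r$. In $\{u_r>0\}$ we have $\Delta w=\partial_e|x|$, which is bounded by $1$ and, since $x\cdot e$ has a sign on the relevant region, even has a favorable sign. Moreover $w=0$ on the free boundary. Suppose $w(y)<0$ at some point $y$. Compare $w$ with the barrier $w+\epsilon(\ldots)-c|x-y|^2$ on a small ball around $y$ whose radius is tied to $\epsilon_r$. Nondegeneracy (Yeressian's estimate) together with Step 1 gives $\partial_e u^*\gtrsim \mathrm{dist}^2$ away from the ray, and the maximum principle then produces a contradiction.

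The degeneracy of $f$ only matters at $0$, but after rescaling we work in an annulus $1/2<|x|<1$, where $f\approx1$. Balancing the quadratic and cubic growth rates is what produces the exponent $\frac13$. This is the step I expect to be the main obstacle: the cone of monotone directions shrinks to a single direction only as $\epsilon_r\to0$, and the estimates must be uniform in $r$.

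\textbf{Step 3: graph and slope estimates.} Monotonicity in a cone of directions within the annulus implies that the free boundary there is a Lipschitz graph $x_2=g(x_1)$. The cone contains directions arbitrarily close to the normal of the ray, with opening defect $C\epsilon^{1/3}$. Scaling back from $u_r$ to $u$ with $r\sim 8|x_1|$ gives the Lipschitz constant estimate for the pointwise slope, $|g'(x_1)-\frac{\sqrt3}{3}|\le C\epsilon_{8|x_1|}^{1/3}$. The $C^1$ regularity away from $0$ follows from the classical regular-point theory, because there $f>0$. Integrating the slope bound from $0$ (where $g(0)=0$) gives the estimate on $g(x_1)-\frac{\sqrt3}{3}x_1$. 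Finally, continuity at $0$ follows because both pieces tend to $0$.
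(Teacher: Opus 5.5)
The gap is in Step 2. You run the barrier argument on $w=\partial_e u_r$ alone, relying on $\Delta w=\partial_e|x|=e\cdot x/|x|$ having a favorable sign. It does not.

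Near the free ray and for $e$ close to the inner normal $e^*$, $e\cdot x$ is close to $0$, and on the positive side of the ray it is even positive. So $w$ is \emph{sub}harmonic in $\{u_r>0\}$ there. Knowing $w=0$ on $\Gamma_{u_r}$ and $w\ge -C\epsilon$ then gives nothing, and there is no uniform $c>0$ with $\Delta w\le -c$ to absorb the $-c|x-y|^2$ term. Even in the classical case $f\equiv1$, $\partial_e u$ is harmonic in $\{u>0\}$ and this argument cannot start.

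Only directions tilted towards the origin, $e\cdot x\le -c|x|$, make $w$ uniformly superharmonic. Those form a one-sided cone whose dual cone does not confine the normal near $e^*$, so the slope estimate of Step 3 cannot follow. Nondegeneracy does not repair this. Note also that $\partial_e u^*$ is explicit and grows linearly, not quadratically, in the distance to the ray.

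The missing idea is Caffarelli's combination $C\partial_e u-u$. In $\{u>0\}$,
\[
\Delta(C\partial_e u-u)=C\,\partial_e|x|-|x|\le C-|x|\le -C
\]
for every unit $e$ on $\{|x|\ge 2C\}$. This is what the paper does. Lemma~\ref{L7.2} takes $C=\frac{1}{20}$ on $\{|x|\ge\frac{1}{10}\}$ with barrier $|x-z|^2/80$. Lemma~\ref{L7.1} gives $\frac{1}{20}\partial_e u^*-u^*\ge0$ in a thin sector next to the ray when $e\cdot e^*$ dominates its width. Lemma~\ref{L7.3} compares the two on a ball of radius $\sim\|u-u^*\|_{C^1(B_1)}^{1/3}$ around a free boundary point. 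The result is $\partial_e u\ge 20u\ge0$ for all $e$ with $e\cdot e^*\gtrsim\|u-u^*\|^{1/3}_{C^1(B_1)}$, which is the nearly half-plane cone of directions you need.

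Some smaller points also need fixing.

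(a) Before any monotonicity argument you must localize $\Gamma_{u_r}$ in a thin neighborhood of the two rays at every scale. Nondegeneracy excludes positivity deep inside $\{u^*=0\}$, and the bound $u^*\ge c\,d^2(|x|+d)$ excludes zeros deep inside $\{u^*>0\}$ (Lemmas~\ref{L6.1}--\ref{L6.4}). This localization also gives existence and uniqueness of the free boundary point on every vertical line in $B_\varrho$, not only near the ray.

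(b) Integrating the slope bound gives $\int_0^{x_1}\epsilon_{8s}^{1/3}\,ds$. This is not controlled by $x_1\epsilon_{8x_1}^{1/3}$, since for $s<r$ one only has $\epsilon_s\le(r/s)^3\epsilon_r$. The paper instead gets the position estimate directly from Lemma~\ref{L6.4} at the single scale $4|x|$.

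(c) The second ray is not the opposite one, because the positive sector has opening $2\pi/3$. For the sector $\{\pi/6<\theta<5\pi/6\}$ it lies on $x_2=-\frac{\sqrt3}{3}x_1$. So for $x_1<0$ the comparison is with $\frac{\sqrt3}{3}|x_1|$ and the slope is near $-\frac{\sqrt3}{3}$. The paper itself only treats $x_1\ge0$.

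(d) Your slope bound lacks the factor $|x_1|$ that appears in the statement. The paper's proof of Lemma~\ref{L8.3} also only yields $C\|u_{4|x|}-u^*\|^{1/3}_{C^1(B_1)}$, in the $C^1$ rather than the $L^\infty$ norm. Interpolating from $L^\infty$ as in your Step 1 would lower the exponent. So that discrepancy lies with the statement rather than with your route.
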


\begin{remark}\label{R1.41} This theorem reveals that although the free boundary remains Lipschitz near the regular point, it cannot be geometrically flat. This contrasts sharply with the key geometric property in the classical obstacle problem, where for usual regular points, ``flatness implies Lipschitz regularity''-a fundamentally new property exhibited by the regular points of \eqref{f6}. It is noteworthy that at regular points, the free boundary fails to be $C^1$. Actually, the free boundary  has a corner at the regular point. This behavior may represent a distinctive feature of the obstacle problem with a degenerate forcing term.
\end{remark}

\addtocontents{toc}{\protect\setcounter{tocdepth}{0}}
\section*{Acknowledgements}
I would like to thank Professor Hui Yu (National University of Singapore) for many helpful conversations and valuable suggestions.
\addtocontents{toc}{\protect\setcounter{tocdepth}{1}}


\section{Preliminaries }		\label{sect Preliminaries}

In this chapter, we will give some basic properties for our problem. \\

By abuse of notation, we given following definition of regular points and singular points in our problem:
\begin{definition}\label{Def1}
Let $u$ solve \eqref{f6}, and $x_0\in \{f=0\}\cap\partial\{u=0\}$, then we call

$(\romannumeral 1)$  $x_0$ is \textbf{regular point} if  $\Phi(0+,u)=\frac{\pi}{81}$,

$(\romannumeral 2)$ $x_0$ is \textbf{singular point} if $\Phi(0+,u)>\frac{\pi}{81}$.
\end{definition}
It should be emphasized that this paper focuses primarily on the properties of free boundary points situated where $f(x)$ is degenerate. Those occurring where $f(x) > 0$, however, exhibit behavior analogous to the free boundary points in the classical obstacle problem. To prevent confusion, we will hereafter refer to the free boundary points in the classical obstacle problem-as well as those in our setting that do not lie at degenerate points of $f(x)$ -as \textbf{usual regular points} or \textbf{usual singular points}.

\begin{definition}\label{Def2} (Rescaling and blowups)
Let $u$ solve \eqref{f6}, and suppose $x_0\in B_1$. We define the rescaling of $u$ at $x_0$  as
\begin{equation*}
u_{x_0,r}(x):=\frac{u(x_0+rx)}{r^3}.
\end{equation*}
If there exists a sequence $r_j\rightarrow0$ such that $u_{x_0,r_j}(x)\rightarrow u_{x_0,0}(x)$, then we call the limit of sequence $\{u_{x_0,r_j}\}$ as a blowup limit of $u$ at $x_0$.
If $x_0=0$, we denote $u_{x_0,0}$ by $u_0$.
\end{definition}

To facilitate the characterization of the epiperimetric inequality, we first introduce the following definition:
\begin{definition}\label{Def3}
We give a class of special homogeneous function of degree $3$ as follows:

\begin{equation}\label{DH1}
\mathbb{H}:= \left\{h_{\theta_1}=\left( \frac{|x|^3}{9}-\frac{\cos3\theta_1}{9}\Big({x_1}^3-3x_1{x_2}^2 \Big)- \frac{\sin3\theta_1}{9}\Big( 3{x_1}^2x_2-{x_2}^3  \Big) \right) \chi_{\{ \theta_1<\theta<\theta_2 \}}\right\},
\end{equation}
where $\theta_2=\theta_1+\frac{2\pi}{3}$ with $\theta_1\in[0,2\pi)$.
\end{definition}

The optimal nondegeneracy and  the optimal growth of solutions are crucial properties, which imply that the solutions neither decay too rapidly nor grows too quickly near free boundary points. In our setting, these key features were established by Yeressian \cite{Yere1}.

\begin{lem}\label{dege} There exists a $C>0$ such that if
\begin{equation*}
f(x)=|x|  \quad\textrm{in}\quad B_1,
\end{equation*}
then for $x_0\in \{u>0 \}$ and $B_r(x_0)\subset\subset B_1$ we have
\begin{equation}\label{T1}
\sup_{\{u>0 \}\cap\partial B_r(x_0)}u\ge u(x_0)+Cr^2\left(r+|x_0|  \right),
\end{equation}
and
\begin{equation}\label{grow}
u(x)\le C\left( u(x_0)+ r^2(r+|x_0|) \right) \quad\textrm{for }\quad x\in B_{\frac{r}{2}}(x_0).
\end{equation}
\end{lem}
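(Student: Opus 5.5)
The plan is to adapt Caffarelli's classical comparison and Harnack arguments. The only new ingredient is an explicit barrier adapted to the forcing term $|x|$. That barrier comes from the identity $\Delta(|x|^3)=9|x|$ in $\mathbb{R}^2$.

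\emph{Nondegeneracy \eqref{T1}.} For fixed $x_0\in\{u>0\}$ I would take the ``Bregman'' barrier
\begin{equation*}
\psi(x):=\frac{1}{9}\Big(|x|^3-|x_0|^3-3|x_0|\,x_0\cdot(x-x_0)\Big).
\end{equation*}
It satisfies $\Delta\psi=|x|$, $\psi(x_0)=0$ and $\nabla\psi(x_0)=0$. The first key step is the lower bound
\begin{equation*}
\psi(x)\ge c\,|x-x_0|^2\big(|x-x_0|+|x_0|\big).
\end{equation*}
To prove it, write $\psi(x)=\frac19\int_0^1(1-t)\,D^2(|\cdot|^3)(x_0+t(x-x_0))[x-x_0,x-x_0]\,dt$. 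Then use $D^2(|y|^3)\ge 3|y|\,\mathrm{Id}$, so the bound reduces to
\begin{equation*}
\int_0^1(1-t)\,|x_0+t(x-x_0)|\,dt\ge c\,\big(|x_0|+|x-x_0|\big).
\end{equation*}
This elementary inequality holds because $|x_0+te|$ is at least $|x_0|-t|e|$ and also at least $t|e|-|x_0|$. One of the two is comparable to $\max\{|x_0|,|e|\}$ on a fixed subinterval of $t$. I expect this inequality to be the only genuinely new computation, and hence the main (mild) obstacle.

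Next set $w:=u-u(x_0)-\psi$ on $\Omega:=\{u>0\}\cap B_r(x_0)$. There $w$ is harmonic and $w(x_0)=0$. On $\partial\{u>0\}\cap B_r(x_0)$ we have $w=-u(x_0)-\psi<0$. By the maximum principle, $\sup_{\partial\Omega}w\ge w(x_0)=0$ (if $w$ is constant the conclusion holds trivially). Hence the supremum must be attained on $\{u>0\}\cap\partial B_r(x_0)$, which gives
\begin{equation*}
\sup_{\{u>0\}\cap\partial B_r(x_0)}u\ge u(x_0)+\min_{\partial B_r(x_0)}\psi\ge u(x_0)+Cr^2(r+|x_0|).
\end{equation*}

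\emph{Growth \eqref{grow}.} On $B_r(x_0)$ we have $0\le\Delta u\le |x|\le r+|x_0|$. I would split $u=h+g$, where $g$ solves $\Delta g=\Delta u$ in $B_r(x_0)$ with $g=0$ on $\partial B_r(x_0)$. Comparison with $\frac{r+|x_0|}{4}(r^2-|x-x_0|^2)$ gives $|g|\le r^2(r+|x_0|)$. The function $h=u-g$ is harmonic, and $u\ge0$ gives $h+r^2(r+|x_0|)\ge0$. The Harnack inequality in $B_{r/2}(x_0)$ then yields
\begin{equation*}
\sup_{B_{r/2}(x_0)}h\le C\big(h(x_0)+r^2(r+|x_0|)\big)\le C\big(u(x_0)+r^2(r+|x_0|)\big).
\end{equation*}
Adding back $g$ completes the proof of \eqref{grow}.
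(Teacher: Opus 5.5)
Your proof is correct. It necessarily differs from the paper's treatment, because the paper gives no argument here: it quotes the lemma from Yeressian \cite{Yere1}, whose nondegeneracy and growth results cover the whole class $\lambda|(x_1,\dots,x_p)|^\alpha\le f\le\Lambda|(x_1,\dots,x_p)|^\alpha$. You instead give a self-contained Caffarelli-type argument specialized to $f=|x|$.

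Your barrier $\psi$ is exactly the function $P_{x_0}$ that the paper later uses in the proof of Lemma~\ref{L4.2}, where only the upper bound \eqref{m5} is needed. Your Taylor-remainder computation with $D^2(|y|^3)\ge 3|y|\,\mathrm{Id}$ supplies the matching lower bound. The elementary integral inequality closes as you claim: use $t\in[0,\frac18]$ when $|x-x_0|\le 4|x_0|$, and $t\in[\frac12,1]$ otherwise.

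The maximum-principle step and the splitting $u=h+g$ with Harnack are fine. In fact $\Delta g\ge0$ with zero boundary data gives $g\le0$, so $h\ge u\ge0$ and no shift is needed.

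What your route buys is an explicit, elementary proof with universal constants. It extends verbatim to $\lambda|x|\le f\le\Lambda|x|$: use $\lambda\psi$ as the barrier and $\Lambda$ in the bound for $g$. What the citation buys is the partially degenerate case $p<n$. There the analogous Bregman barrier built from $|x'|^{\alpha+2}$ vanishes at the points of $\partial B_r(x_0)$ with $x'=x_0'$, so it gives no lower bound there and a different argument is needed.

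Finally, your argument produces a small constant in \eqref{T1} and a large one in \eqref{grow}. The single $C$ in the statement should therefore be read as two constants, which is how the paper later uses it.
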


\section{Epiperimetric inequality}		\label{sect Epiperimetric inequality}

Inspired by Weiss's outstanding work \cite{Wei1}, we shall establish the corresponding  epiperimetric inequality for obstacle problem with a degenerate forcing term.
\\

\begin{proof}[Proof of Theorem \ref{TH1}.]
 Assume by contradiction that there are sequences $\kappa_m \to 0$, $\delta_m\to0$, $0\le c_m\in W^{1,2}(B_1)$ that is homogeneous of degree $3$ such that
\begin{equation}\label{ep1}
||c_m-h_m||_{W^{1,2}(B_1)}=\delta_m \quad\textrm{for some}\quad h_m\in \mathbb{H}
\end{equation}
and
\begin{equation}\label{ep2}
M(v)>(1-\kappa_m)M(c_m)+\kappa_mM(h_0)
\end{equation}
for all  $v\in W^{1,2}(B_1)$ with $v=c_m$ on $\partial B_1$,
where $\delta_m:=\inf_{ h_m\in \mathbb{H}}||c_m-h_m||_{W^{1,2}(B_1)}$. By rotation invariance of  the norm, after a possible rotation of coordinate axes, we can assume  $h_m:=h_0$ (see the definition \eqref{ff01}), where
\begin{equation}\label{ep3}
 h_0(x)=\left\{ \frac{|x|^3}{9}-\frac{{x_1}^3-3x_1{x_2}^2 }{9} \right\}\chi_{\{ 0<\theta<\frac{2\pi}{3} \}}.
\end{equation}
Subtracting $M(h_0)$ from both sides of equation \eqref{ep2}, one can see that
\begin{equation}\label{ep4}
(1-\kappa_m)(M(c_m)- M(h_0))<M(v)-M(h_0)
\end{equation}
for all $v\in W^{1,2}(B_1)$ with  $v=c_m$ on $\partial B_1$.
Note the  first variation of the energy $M(v)$ at $h_0$ vanishes. In other words,
\begin{equation}\label{ep5}
(\delta M(h_0))(\phi) = 2\int_{B_1}\left( \nabla h_0 \cdot \nabla\phi+|x|\phi\chi_{\{h_0>0\}}  \right)dx -6\int_{\partial B_1}h_0\phi d\mathcal{H}
\end{equation}
for all $\phi\in W^{1,2}(B_1)$.
Since $h_0$ is a homogeneous function of degree $3$, we have $x\cdot\nabla h_0=3h_0$.  This together the fact $\Delta h_0=|x|\chi_{\{h_0>0\}}$ in $\mathbb{R}^2$  yields
\begin{equation}\label{ep6}
(\delta M(h_0))(\phi)=0 \quad\textrm{ for all}\quad \phi\in W^{1,2}(B_1).
\end{equation}
Hence by taking $\phi=c_m-h_0$, we conclude that
\begin{equation}\label{ep7}
\begin{split}
(1-\kappa_m)(M(c_m)- M(h_0))&=(1-\kappa_m)\left[ M(c_m)- M(h_0)-(\delta M(h_0))(  c_m-h_0 )\right]\\
&=(1-\kappa_m) \bigg[ \int_{B_1} |\nabla(c_m-h_0)|^2dx+ \int_{B_1\cap \{h_0=0 \}} 2|x|c_mdx\\
&\quad-3\int_{\partial B_1}(c_m-h_0)^2d\mathcal{H}\bigg].
\end{split}
\end{equation}
By a similar method, one can see that
\begin{equation}\label{ep8}
\begin{split}
M(v)- M(h_0)&=M(v)- M(h_0)-(\delta M(h_0))( v-h_0 )\\
&=\int_{B_1} |\nabla(v-h_0)|^2dx  +\int_{B_1}2|x| \big[ v^+-h_0-(v-h_0)\chi_{\{ h_0>0  \}} \big]dx             \\
&\quad -3\int_{\partial B_1}(v-h_0)^2d\mathcal{H}\\
&=\int_{B_1} |\nabla(v-h_0)|^2dx-3\int_{\partial B_1}(v-h_0)^2d\mathcal{H}\\
&\quad+\int_{B_1}2|x|\Big[v^+\chi_{\{ h_0=0  \}}+v^-\chi_{\{ h_0>0  \}}\Big]dx.
\end{split}
\end{equation}
Therefore, combining \eqref{ep4}, \eqref{ep7}, and \eqref{ep8} we get
\begin{equation}\label{ep9}
\begin{split}
(1-\kappa_m)& \bigg[ \int_{B_1} |\nabla(c_m-h_0)|^2dx+ \int_{B_1\cap \{h_0=0 \}} 2|x|c_mdx
-3\int_{\partial B_1}(c_m-h_0)^2d\mathcal{H}\bigg]\\
&<  \int_{B_1} |\nabla(v-h_0)|^2dx-3\int_{\partial B_1}(v-h_0)^2d\mathcal{H}+\int_{B_1}2|x|\Big[v^+\chi_{\{ h_0=0  \}}+v^-\chi_{\{ h_0>0  \}}\Big]dx
\end{split}
\end{equation}
for all $v\in W^{1,2}(B_1)$ with  $v=c_m$ on $\partial B_1$.

Define now the sequence of functions $w_m:=\frac{c_m-h_0}{\delta_m}$. Then it is easy to find
\begin{equation}\label{ep10}
||w_m||_{W^{1,2}(B_1)}=1.
\end{equation}
Thus there exists a subsequence $\{w_m\}$ satisfying  $w_m\rightharpoonup w$ weakly in $W^{1,2}(B_1)$.
To reach a contradiction, we need to show the following
\\

\noindent \textbf{Claim.} The sequence of functions $w_m\to w$ strongly in $W^{1,2}(B_1)$, and $w\equiv0$ in $B_1$.
\\

Indeed, suppose the claim holds, then we have
\begin{equation*}
||w_m||_{W^{1,2}(B_1)}=0,
\end{equation*}
which contradicts the \eqref{ep10}  and completes the proof of Theorem \ref{TH1}.

Now, we divide the proof of the claim into four steps.
\\

\noindent \textbf{Step 1.} $w=0$ a.e. in $B_1\cap \{h_0=0 \}^\circ $ and $w=0$ $\mathcal{H}^1$-a.e. on $B_1\cap \partial\{h_0=0 \}$.

Firstly, we show that
\begin{equation}\label{ep11}
\int_{B_1\cap \{h_0=0 \}}\frac{c_m}{{\delta_m}^2}\le C.
\end{equation}
Given a radial function $\eta\in C^{\infty}_0(B_1)$ with $\eta\in[0,1]$, then by inserting $v:=(1-\eta)c_m+\eta h_0$ into \eqref{ep9}, we have
\begin{equation}\label{ep12}
\begin{split}
(1-\kappa_m)\int_{B_1\cap \{h_0=0 \}} 2|x|c_mdx&<3(1-\kappa_m)\int_{\partial B_1}(c_m-h_0)^2d\mathcal{H}+\int_{B_1} |\nabla\Large((1-\eta)(c_m-h_0)\Large)|^2dx\\
&\quad -3\int_{\partial B_1}|(1-\eta) (c_m-h_0)|^2d\mathcal{H}+\int_{B_1\cap \{h_0=0 \}} 2(1-\eta)|x|c_mdx.
\end{split}
\end{equation}
This together with $w_m=\frac{c_m-h_0}{\delta_m} $ yields taht
\begin{equation}\label{ep13}
\begin{split}
(1-\kappa_m)\int_{B_1\cap \{h_0=0 \}} \frac{2|x|}{\delta_m}w_mdx&<3(1-\kappa_m)\int_{\partial B_1}{w_m}^2d\mathcal{H}+\int_{B_1} |\nabla\Large((1-\eta)w_m\Large)|^2dx\\
&\quad +\int_{B_1\cap \{h_0=0 \}} (1-\eta)\frac{2|x|}{\delta_m}w_mdx.
\end{split}
\end{equation}
In view of $||w_m||_{W^{1,2}(B_1)}=1$, one can see that there exists an uniform constant $C_0>0$ such that
\begin{equation}\label{ep14}
3(1-\kappa_m)\int_{\partial B_1}{w_m}^2d\mathcal{H}+\int_{B_1} |\nabla((1-\eta)w_m)|^2dx\le C_0.
\end{equation}
Thus, combining \eqref{ep13} and \eqref{ep14}, we conclude that
\begin{equation}\label{ep15}
\int_{B_1\cap \{h_0=0 \}} 2|x|(\eta-\kappa_m)\frac{w_m}{\delta_m}dx\le C_0.
\end{equation}

Based on the fact $\eta(x)=\eta(|x|)$, and $c_m$ is third-order homogeneous global function, we derive that
\begin{equation}\label{ep16}
\begin{split}
\int_{0}^{1}dt\int_{\partial B_t\cap \{h_0=0 \}}2t(\eta(t)-\kappa_m)\frac{c_m}{{\delta_m}^2}d\mathcal{H}&=\int_{0}^{1}2t^5(\eta(t)-\kappa_m)dt\int_{\partial B_1\cap \{h_0=0 \}}\frac{c_m}{{\delta_m}^2}d\mathcal{H}\\
&=2\int_{0}^{1}t^5(\eta(t)-\kappa_m)dt\int_{\partial B_1\cap \{h_0=0 \}}\frac{w_m}{\delta_m}d\mathcal{H}\\
&\le C_0.
\end{split}
\end{equation}
Note that for $m$ large enough, we have
\begin{equation}\label{ep17}
0<2\int_{0}^{1}t^5(\eta(t)-\kappa_m)dt<C_1
\end{equation}
for some $C_1>0$. Hence one can see that
\begin{equation}\label{ep18}
\int_{\partial B_1\cap \{h_0=0 \}}\frac{w_m}{\delta_m}d\mathcal{H}<C_2.
\end{equation}
In view of $w_m\ge0$ in $\{h_0=0\}$, by taking $m\to\infty$, we deduce that $w(x)=0$ a.e. $\{h_0=0\}^\circ\cap B_1$.\\

\noindent \textbf{Step 2.} $\Delta w=0$ in $B_1\cap \{h_0>0 \} $.\\

Choosing $v=(1-\eta)c_m+\eta(h_0+\delta_m\Psi) $, where $\eta\in C^{\infty}_0(B_1\cap\{h_0>0 \})$ and $\eta(x)\in[0,1]$, $\Psi\in L^{\infty}(B_1)\cap W^{1,2}(B_1)$. It is easy to find that $v\in W^{1,2}(B_1)$ and $v=c_m$ on $\partial B_1$.

Inserting this $v$ into \eqref{ep9}, we have
\begin{equation}\label{ep19}
\begin{split}
(1-\kappa_m)\Bigg[\int_{B_1}|\nabla w_m|^2dx&+\int_{B_1\cap \{h_0=0 \}} \frac{2|x|}{\delta_m} w_m dx \Bigg]\le \int_{B_1}\Bigg| \frac{\nabla( (1-\eta)(c_m-h_0)+\eta\delta_m\Psi )}{\delta_m} \Bigg|^2dx\\
& \quad+\int_{B_1\cap \{h_0>0 \}}\frac{2|x|}{{\delta_m}^2}( (1-\eta)c_m+\eta(h_0+\delta_m\Psi ) )^- dx \\
&\quad+\int_{B_1\cap \{h_0=0 \}}\frac{2|x|}{{\delta_m}^2}((1-\eta)c_m+\eta\delta_m\Psi)^+dx.
\end{split}
\end{equation}
Noticing that $\eta\equiv0$ in $B_1\cap \{h_0=0 \}$, and $\Psi\in L^{\infty}(B_1)$, thus for $m$ large enough, we infer that
\begin{equation}\label{ep20}
\int_{B_1\cap \{h_0=0 \}}\frac{2|x|}{{\delta_m}^2}((1-\eta)c_m+\eta\delta_m\Psi)^+dx=\int_{B_1\cap \{h_0=0 \}}\frac{2|x|}{{\delta_m}^2}c_mdx
\end{equation}
and
\begin{equation}\label{ep21}
\int_{B_1\cap \{h_0>0 \}}\frac{2|x|}{{\delta_m}^2} \left( (1-\eta)c_m+\eta(h_0+\delta_m\Psi ) \right)^- dx=0.
\end{equation}
Combining \eqref{ep19},\eqref{ep20} and \eqref{ep21}, it follows that
\begin{equation}\label{ep22}
\begin{split}
\int_{B_1}|\nabla w_m|^2dx &\le \kappa_m\int_{B_1}|\nabla w_m|^2dx+\kappa_m\int_{B_1\cap \{h_0=0 \}}\frac{2|x|}{\delta_m}w_mdx\\
&\quad+\int_{B_1} |\nabla((1-\eta)w_m+\eta\Psi)|^2dx.
\end{split}
\end{equation}
Letting $m\to \infty$, based on the fact $||w_m||_{W^{1,2}(B_1)}=1$ and with the help of step1, we get
\begin{equation}\label{ep23}
\int_{B_1}|\nabla w|^2dx \le \int_{B_1} |\nabla((1-\eta)w+\eta\Psi)|^2dx.
\end{equation}
Actually, by the classical approximation theory, we can drop the condition $\Psi\in L^\infty(B_1)$.

For all ball $B_0\subset\subset B_1\cap \{h_0>0 \}$, we can choose $\eta\equiv1$ in $B_0$, $\Psi=w$ in $B_1 \verb+\+ B_0$.
Hence we  can see that
\begin{equation}\label{ep24}
\int_{B_0}|\nabla w|^2dx \le \int_{B_0} |\nabla \Psi|^2dx
\end{equation}
for all $\Psi\in W^{1,2}(B_1)$ with $\Psi=w$ on $\partial B_0$.
This together with the arbitrariness of the choice of  $B_0$ yields the desired conclusion.\\

\noindent \textbf{Step 3.} $w=0$ in $B_1$.\\
\indent We already showed that $w=0$ a.e. in $B_1\cap \{h_0=0 \} $. Thus we just need prove $w=0$ in $B_1\cap \{h_0>0 \} $. Using the homogeneity of $w$ and with the help of step 2,we obtain that
$y''(\theta)+9y(\theta)=0$, where $y(\theta)=w(e^{i\theta})$. This leads to $w(e^{i\theta})=A\cos(3\theta)+B\sin(3\theta)$, where $A$ and $B$ are arbitrary constants. Notice that from step 1 we have $w(e^{i\theta})=0$  $\mathcal{H}^1$-a.e. on $B_1\cap \partial\{h_0=0 \}$, and hence one can see that $A=0$ and
\begin{equation}\label{1d03}
w(x)=B(3{x_1}^2x_2-{x_2}^3).
\end{equation}

\indent On the other hand, based on the fact that $h_0$ is the best approximation in $W^{1,2}(B_1)$, we have that
\begin{equation*}
\begin{split}
||c_m-h_0||^2_{W^{1,2}(B_1)}&\le ||c_m-h_0+h_0-h_{\theta_1} ||^2_{W^{1,2}(B_1)}\\
&=||c_m-h_0||^2_{W^{1,2}(B_1)}+||h_{\theta_1}-h_0||^2_{W^{1,2}(B_1)}-2(c_m-h_0, h_{\theta_1}-h_0),
\end{split}
\end{equation*}
witch implies that
\begin{equation}\label{ep25}
(w_m,h_{\theta_1}-h_0)\le \frac{1}{2\delta_m}||h_{\theta_1}-h_0||^2_{W^{1,2}(B_1)}.
\end{equation}
Define $v_{\theta_1}=(\cos\theta_1,\sin\theta_1)$. We denote $v_{\theta_0}$ by $v_0$ for simplicity. Observe that $h_{\theta_1}\to h_0$ if and only  if $\theta_1\to 0+$ or $\theta_1\to 2\pi-$. \\

$(\romannumeral 1)$ In the case: $\theta_1\to 0+$\\

\noindent It is easy to see that
\begin{equation}\label{ep26}
\begin{split}
\frac{(w_m,h_{\theta_1}-h_0)}{|v_{\theta_1}-v_0|}&=\frac{1}{{|v_{\theta_1}-v_0|}}\int_{B_1} \left(w_m(h_{\theta_1}-h_0)+\nabla{w_m}\cdot\nabla(h_{\theta_1}-h_0) \right)dx\\
&=\frac{1}{{|v_{\theta_1}-v_0|}}\int_{B_1\cap D_1}\left(w_m(h_{\theta_1}-h_0)+\nabla{w_m}\cdot\nabla(h_{\theta_1}-h_0) \right)dx\\
&\quad+\frac{1}{{|v_{\theta_1}-v_0|}}\int_{B_1\cap D_2}\left(w_m h_{\theta_1}+\nabla{w_m}\cdot\nabla h_{\theta_1}\right)dx\\
&\quad -\frac{1}{{|v_{\theta_1}-v_0|}}\int_{B_1\cap D_3}\left(w_m h_0+\nabla{w_m}\cdot\nabla h_0\right)dx.
\end{split}
\end{equation}
Here $D_1:=\{x=(r,\theta): r>0,\theta_1<\theta<\frac{2\pi}{3}\}$, $D_2:=\{x=(r,\theta): r>0,\frac{2\pi}{3}<\theta<\theta_1+\frac{2\pi}{3} \}$, $D_3:=\{x=(r,\theta): r>0, 0<\theta<\theta_1 \}$.

For every $x\in D_1$ we have
\begin{equation*}
\begin{split}
\frac{h_{\theta_1}-h_0}{|v_{\theta_1}-v_0|}&=\frac{r^3}{9}\frac{\cos3\theta-\cos3(\theta-\theta_1)}{|v_{\theta_1}-v_0|}\\
&=\frac{r^3}{9}\frac{v_{\theta_1}-v_0}{|v_{\theta_1}-v_0|}\left( (1+2\cos\theta_1)^2 \cos3\theta, (3-4\sin^2\theta_1) \sin3\theta \right).
\end{split}
\end{equation*}
It is easy to see that
\begin{equation*}
\frac{v_{\theta_1}-v_0}{|v_{\theta_1}-v_0|}\to (0,-1) \quad\textrm{as}\quad \theta_1\to 0+.
\end{equation*}
Thus, combining the computations above, we arrive at
\begin{equation}\label{ep27}
\frac{h_{\theta_1}-h_0}{|v_{\theta_1}-v_0|}\to -\frac{r^3}{3}\sin3\theta=-{x_1}^2x_2+\frac{1}{3}{x_2}^3 \quad\textrm{for all}\quad x\in D_1
\end{equation}
as $\theta_1\to 0+$. On the other hand, we have
\begin{equation}\label{ep28}
\begin{split}
\frac{\nabla(h_{\theta_1}-h_0)}{|v_{\theta_1}-v_0|}&=\frac{r^2}{3|v_{\theta_1}-v_0|}
\begin{pmatrix}
(1-\cos3\theta_1)( \cos^2\theta-\sin^2\theta )-2\sin3\theta_1 \sin\theta\cos\theta
\\ (-\sin3\theta_1)( \cos^2\theta-\sin^2\theta )-2(1-\cos3\theta_1)\sin\theta\cos\theta
\end{pmatrix}'\\
& =\frac{r^2}{3}\left[\frac{(1-\cos3\theta_1)(\cos2\theta,-\sin2\theta )}{|v_{\theta_1}-v_0|}-\frac{(\sin3\theta_1)( \sin2\theta, \cos2\theta)}{|v_{\theta_1}-v_0|} \right].
\end{split}
\end{equation}
Note that
\begin{equation}\label{ep29}
\frac{1-\cos3\theta_1}{|v_{\theta_1}-v_0|} =\frac{\sqrt{1-\cos\theta_1}(1+4(1-\cos\theta_1)\cos\theta_1 )}{\sqrt{2}}\to0 \quad\textrm{as}\quad \theta_1\to 0+,
\end{equation}
and
\begin{equation}\label{ep30}
\frac{\sin3\theta_1}{|v_{\theta_1}-v_0|} =(4\cos^2\theta_1 -1 )\cos{\frac{\theta_1}{2}}  \to 3 \quad\textrm{as}\quad \theta_1\to 0+,
\end{equation}
Hence, combining \eqref{ep28}, \eqref{ep29}, and \eqref{ep30}, we obtain
\begin{equation}\label{ep31}
\frac{\nabla(h_{\theta_1}-h_0)}{|v_{\theta_1}-v_0|}\to-r^2(\sin2\theta, \cos2\theta)=(-2x_1x_2,-{x_1}^2+{x_2}^2) \quad\textrm{for all}\quad x\in D_1
\end{equation}
as $\theta_1\to 0+$.

Since $\cos3\theta_1<\cos3(\theta-\theta_1)$ in $D_2$, one can find that
\begin{equation}\label{ep32}
\begin{split}
0&\le\frac{h_{\theta_1}}{|v_{\theta_1}-v_0|}=\frac{r^3}{9}\frac{1-\cos3(\theta-\theta_1)}{\sqrt{2(1-\cos\theta_1 )}}\\
&\le \frac{r^3}{9} \frac{1-\cos3\theta_1}{2\sin{\frac{\theta_1}{2}}}=\frac{r^3}{9} \frac{\sin^2{ \frac{3\theta_1}{2} }}{\sin{\frac{\theta_1}{2}}}\to0   \quad\textrm{as}\quad \theta_1\to 0+.
\end{split}
\end{equation}
Similarly, we have
\begin{equation}\label{ep33}
\begin{split}
\frac{h_0}{|v_{\theta_1}-v_0|}\to0 \quad\textrm{for all}\quad x\in D_3
\end{split}
\end{equation}
as $\theta_1\to 0+$.

On the other hand, by the Cauchy-Schwartz inequality, we deduce that
\begin{equation*}
\begin{split}
\frac{1}{|v_{\theta_1}-v_0|}\left|\int_{B_1\cap D_3}\nabla{w_m}\cdot\nabla h_0dx \right|&\le  ||w_m||_{W^{1,2}(B_1)}\frac{||\nabla h_0||_{L^2(B_1\cap D_3)}}{|v_{\theta_1}-v_0|}.
\end{split}
\end{equation*}
Indeed,
\begin{equation*}
\begin{split}
\frac{||\nabla h_0||_{L^2(B_1\cap D_3)}}{|v_{\theta_1}-v_0|}&=\frac{1}{|v_{\theta_1}-v_0|}\left(\int_{0}^{1}\frac{2r^5}{9}dr\int_{0}^{\theta_1}( 1-\cos3\theta)  d\theta \right)^{\frac{1}{2}}\\
&=\frac{\sqrt{2}}{18}\sqrt{\frac{3\theta_1-\sin3\theta_1}{1-\cos\theta_1}}\to 0 \quad\textrm{as}\quad \theta_1 \to 0+.
\end{split}
\end{equation*}
Hence we derive that
\begin{equation}\label{ep34}
\begin{split}
\frac{1}{|v_{\theta_1}-v_0|}\int_{B_1\cap D_3}\nabla{w_m}\cdot\nabla h_0dx \to 0 \quad\textrm{as}\quad \theta_1 \to 0+.
\end{split}
\end{equation}
Similarly, we have
\begin{equation}\label{ep35}
\begin{split}
\frac{1}{|v_{\theta_1}-v_0|}\int_{B_1\cap D_2}\nabla{w_m}\cdot\nabla h_{\theta_1}dx \to0\quad\textrm{as}\quad \theta_1 \to 0+.
\end{split}
\end{equation}

Actually, the above estimates indicate that there exists a uniform constant $C>0$ such that
\begin{equation*}
\begin{split}
\frac{|h_{\theta_1}-h_0|+ |\nabla(h_{\theta_1}-h_0)|}{|v_{\theta_1}-v_0|}\le C \quad\textrm{for all}\quad x\in B_1
\end{split}
\end{equation*}
as $\theta_1\to 0+$. Furthermore, this leads to the conclusion
\begin{equation}\label{ep36}
\frac{||h_{\theta_1}-h_0||^2_{W^{1,2}(B_1)}}{|v_{\theta_1}-v_0|}\to0 \quad\textrm{as}\quad \theta_1 \to 0+.
\end{equation}
Multiplying both sides of \eqref{ep25} by $\frac{1}{|v_{\theta_1}-v_0|}$, and with the aid of \eqref{ep27}, \eqref{ep31}-\eqref{ep36}, one can deduce that
\begin{equation}\label{ep37}
\begin{split}
\int_{B_1} w_m (-{x_1}^2x_2+\frac{{x_2}^3}{3})\chi_{\{ h_0>0 \}}dx+\int_{B_1}\nabla{w_m}\cdot(-2x_1x_2,-{x_1}^2+{x_2}^2)\chi_{\{ h_0>0 \}}dx \le o(1)
\end{split}
\end{equation}
as $\theta_1\to 0+$.  Further, using \eqref{1d03}, and then taking $m\to \infty$, we obtain           \begin{equation*}
\begin{split}
-B\int_{B_1\cap \{h_0>0\}}\left(\frac{(3{x_1}^2x_2-{x_2}^3)^2}{3}+3|x|^4 \right)dx  \le 0.
\end{split}
\end{equation*}
This implies that
\begin{equation}\label{ep38}
B\ge0.
\end{equation}

$(\romannumeral 2)$ In the case: $\theta_1\to 2\pi-$\\

\noindent By a similar method in case $(\romannumeral 1)$, firstly letting $\theta_1\to 2\pi-$,  we have
\begin{equation*}
\begin{split}
\int_{B_1} w_m ({x_1}^2x_2-\frac{{x_2}^3}{3})\chi_{\{ h_0>0 \}}dx+\int_{B_1}\nabla{w_m}\cdot(2x_1x_2,{x_1}^2-{x_2}^2)\chi_{\{ h_0>0 \}}dx \le o(1)
\end{split}
\end{equation*}
And then taking $m\to \infty $, one can derive that
\begin{equation*}
\begin{split}
B\int_{B_1}\left( \frac{(3{x_1}^2x_2-{x_2}^3)^2}{3}+3|x|^4\right)\chi_{\{ h_0>0 \}} dx \le 0,
\end{split}
\end{equation*}
which implies
\begin{equation*}
B\le0.
\end{equation*}
This together with \eqref{ep38} yields to $B=0$, which proves the desired conclusion.\\

Next, we claim that the  ${w_m}$ goes to $0$ as the subsequence $m\to \infty$, which is sufficient to arrive at a contradiction to the definition ${w_m}$. Note that we still denote this subsequence as ${w_m}$.\\

\noindent \textbf{Step 4.} $w_{m}\to w=0$ strongly  in $W^{1,2}(B_1)$ as $m\to \infty$.

Since $w_{m}\to w $ weakly in $W^{1,2}(B_1)$ and  $W^{1,2}(B_1) \subset\subset L^2(B_1)$, one can find the subsequence, still denoted by  $w_{m}$, such that $w_{m}\to w$ strongly  in $L^2(B_1)$ as $m\to \infty$. Therefore, we still need to show the strong convergence of $\nabla w_{m}$ in $L^2(B_1)$ as the subsequence $m\to \infty$.

To this end, one can choose a test function $v:=(1-\eta)c_m+\eta(x) h_0$ with $\eta(x)=\min (\max(2-2|x|,0),1)$.
It is easy to see that $\eta\in W^{1,\infty}_0(B_1)$ and $v=c_m$ on $\partial B_1$. Now, plugging $\eta(x)$ into \eqref{ep9}, we obtain
\begin{equation*}
\begin{split}
(1-\kappa_m)\int_{B_1} |\nabla w_m|^2dx&<\int_{B_1\cap \{h_0=0 \}}\frac{2(\kappa_m-\eta)|x|w_m}{\delta_m}dx+3\int_{\partial B_1}(1-\kappa_m-(1-\eta)^2){w_m}^2d\mathcal{H}\\
&\quad+\int_{B_1}|\nabla((1-\eta)w_m)|^2dx.
\end{split}
\end{equation*}
Furthermore, since $\eta=0$ on $\partial B_1$, we conclude that
\begin{equation}\label{ep39}
\begin{split}
\int_{B_1} |\nabla w_m|^2dx&\le C\kappa_m+\int_{B_1} |\nabla \eta|^2{w_m}^2dx-2\int_{B_1}(1-\eta)w_m\nabla\eta\cdot\nabla w_m dx\\
&\quad +\int_{B_1}(1-\eta)^2|\nabla w_m|^2dx.
\end{split}
\end{equation}
for large $m$. Thanks to the choice of $\eta(x)$, one can see that
\begin{equation}\label{ep40}
\int_{B_1} |\nabla \eta|^2{w_m}^2dx\le C||w_m||^2_{L^2(B_1)},
\end{equation}
and
\begin{equation}\label{ep41}
\int_{B_1}(1-\eta)^2|\nabla w_m|^2dx\le \int_{B_1 \backslash B_{1/2}}|\nabla w_m|^2dx.
\end{equation}
Using the Cauchy-Schwartz inequality and the fact that $||w_m||_{W^{1,2}(B_1)}=1$, we obtain that
\begin{equation}\label{ep42}
\begin{split}
-2\int_{B_1}(1-\eta)w_m\nabla\eta\cdot\nabla w_m dx&\le C ||\nabla w_m||_{L^2(B_1)}||w_m||_{L^2(B_1)}\\
&\le C ||w_m||_{L^2(B_1)}.
\end{split}
\end{equation}
And therefore, collecting  \eqref{ep39}-\eqref{ep42}, we arrive at
\begin{equation}\label{ep43}
\begin{split}
\int_{B_{1/2}}|\nabla w_m|^2dx\le C\left(\kappa_m+||w_m||^2_{L^2(B_1)}\right).
\end{split}
\end{equation}
Moreover, applying the homogeneity of $w_m$ again, it follows from \eqref{ep43} that
\begin{equation}\label{ep44}
\begin{split}
\int_{B_{1/2}}|\nabla w_m|^2dx&= \int_{0}^{1/2}{\rho}^5d\rho\int_{\partial B_1}|\nabla w_m|^2d\mathcal{H}\\
&=\frac{1}{384}\int_{\partial B_1}|\nabla w_m|^2d\mathcal{H}\\
&=\frac{1}{64}\int_{B_1}|\nabla w_m|^2dx.
\end{split}
\end{equation}
Now, putting \eqref{ep43} and \eqref{ep44} together, we prove that
\begin{equation*}
\int_{B_1}|\nabla w_m|^2dx\le C\left(\kappa_m+||w_m||^2_{L^2(B_1)}\right) \to 0 \quad\textrm{as}\quad m \to \infty.
\end{equation*}
This finishes the proof of the Claim, and hence completes the proof of Theorem \ref{TH1}.
\end{proof}

\section{$C^{1,1}$ bound and the  monotonicity formula} \label{monotonicity formula}

Now, we are in  a position  to show the monotonicity formula. Firstly, based on the
optimal growth estimate \eqref{grow}, one can show the growth behavior of the solution away from the free boundary.
\begin{lem}\label{L4.1}
Let $u$ be a solution of \eqref{f6} and suppose that $x_0\in \{u>0\}$, $B_{5d}(x_0)\subset B_1$, then we have
\begin{equation*}
u\le Cd^2(|x_0|+d)  \quad\textrm{in}\quad B_d(x_0).
\end{equation*}
where $d:=dist(x_0,\partial \{u=0\})$.
\end{lem}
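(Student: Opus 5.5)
The plan is to reduce the statement to the optimal growth estimate \eqref{grow} of Lemma \ref{dege}, applied on a ball centred at a free boundary point. The set of such centres will be infinitesimally close to $\{u>0\}$, and we pass to the limit. The natural approach is to pick a point $y_0\in\partial\{u=0\}$ with $|x_0-y_0|=d$, which exists by the definition of $d$. Then $B_d(x_0)\subset B_{2d}(y_0)$, and since $B_{5d}(x_0)\subset B_1$ we have $B_{4d}(y_0)\subset\subset B_1$. This leaves room to apply \eqref{grow} with radius $r=4d$, so that its conclusion holds on $B_{2d}(y_0)\supset B_d(x_0)$.

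Lemma \ref{dege} is stated only for centres in $\{u>0\}$, whereas $y_0$ is a free boundary point. I will therefore take a sequence $y_k\in\{u>0\}$ with $y_k\to y_0$. For $k$ large we have $B_{4d}(y_k)\subset\subset B_1$, because $|y_k-x_0|<d+\epsilon$ and $B_{5d}(x_0)\subset B_1$. Applying \eqref{grow} with $r=4d$ gives
\begin{equation*}
u(x)\le C\left(u(y_k)+16d^2(4d+|y_k|)\right)\quad\textrm{for } x\in B_{2d}(y_k).
\end{equation*}
Any fixed $x\in B_d(x_0)$ lies in $B_{2d-\epsilon}(y_0)$ for some $\epsilon>0$, hence lies in $B_{2d}(y_k)$ for all large $k$. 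Letting $k\to\infty$ and using continuity of $u$ (from the $C^{1,1}_{loc}$, or at least $C^{1,\alpha}$, regularity) together with $u(y_0)=0$, we obtain
\begin{equation*}
u(x)\le Cd^2(d+|y_0|)\quad\textrm{in } B_d(x_0).
\end{equation*}

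It remains to replace $|y_0|$ by $|x_0|$. Since $|y_0|\le|x_0|+d$, we get $d+|y_0|\le 2(d+|x_0|)$, which gives $u\le Cd^2(|x_0|+d)$ in $B_d(x_0)$ after enlarging $C$.

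The step I expect to need the most care is the limiting procedure. The estimate \eqref{grow} must be applied with centres in $\{u>0\}$, and the constant in Lemma \ref{dege} must be uniform in the centre, which it is as stated. If one prefers to avoid approximation, an alternative is to redo Yeressian's argument directly at $y_0$, using the nondegeneracy \eqref{T1} by contradiction. The approximation route above is shorter and uses only what is already stated.
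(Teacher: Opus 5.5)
Your argument is correct and is essentially the paper's proof: pick a nearest free boundary point $y_0$, apply \eqref{grow} with radius $4d$ to control $u$ on $B_{2d}(y_0)\supset B_d(x_0)$ using $u(y_0)=0$, and finish with $|y_0|\le|x_0|+d$; the only difference is that the paper applies \eqref{grow} directly at the free boundary point, while you approximate $y_0$ by points of $\{u>0\}$, which is the more faithful use of Lemma \ref{dege} as stated. One small repair: $|y_k-x_0|<d+\epsilon$ does not by itself give $B_{4d}(y_k)\subset\subset B_1$, but taking $y_k$ on the segment from $y_0$ to $x_0$ (which lies in $B_d(x_0)\subset\{u>0\}$) gives $|y_k-x_0|<d$ and hence $\overline{B_{4d}(y_k)}\subset B_{5d}(x_0)\subset B_1$.
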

\begin{proof}
Let $y\in \partial \{u=0\}$ satisfying $d=dist(x_0,\partial \{u=0\})=|y-x_0|$. It is easy to see that $B_{4d}(y)\subset B_{5d}(x_0)$, hence one can derive form \eqref{grow} that
\begin{equation*}
u\le Cd^2(|y|+d) \quad\textrm{in}\quad B_{2d}(y)
\end{equation*}
thanks to $u(y)=0$.  Since $B_d(x_0)\subset B_{2d}(y)$, by the triangle inequality, it follows that
\begin{equation*}
\begin{split}
u&\le Cd^2(d+|y-x_0|+|x_0|)\\
&=Cd^2(2d+|x_0|) \quad\textrm{for all}\quad  x\in B_d(x_0).
\end{split}
\end{equation*}
This completes the proof of Lemma \ref{L4.1}.
\end{proof}

Next, we will show the sequence $\{ u_r \}$ is uniformly bounded in $C^{1,1}(B_1)$. In our problem, this property is not obvious due to the definition of $u_r$.  With the uniform boundedness in hand, one can consider a possible limit as $r\to 0+$.
\begin{lem}\label{L4.2}
Let $u$ be a solution of \eqref{f6}. Suppose  $0$ is a regular point and $B_{r_0}\subset B_1$,  then there exists $C>0$ such that
\begin{equation}\label{m1}
||u_r||_{C^{1,1}(B_1)} \le C  \quad\textrm{for all}\quad r\in (0,r_0/6).
\end{equation}
\end{lem}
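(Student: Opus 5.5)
We need uniform $C^{1,1}(B_1)$ bounds for $u_r(x)=u(rx)/r^3$, $r<r_0/6$, where $0$ is a free boundary point with $f(0)=0$. The plan has three steps: an $L^\infty$ bound from the optimal growth, then interior second-derivative bounds where $u_r>0$ by scaling, and finally a gluing argument across the free boundary.

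\textbf{Step 1: $L^\infty$ bound.} Apply \eqref{grow} of Lemma \ref{dege} at $x_0=0$, approximating $0$ by points of $\{u>0\}$ and using $u(0)=0$ together with continuity. This gives
\[
u(x)\le C\rho^3 \quad \text{in } B_{\rho/2}, \qquad \rho\le r_0 .
\]
Hence $\|u_r\|_{L^\infty(B_2)}\le C$ for $r<r_0/6$; in fact the bound holds on $B_3$, which gives room for the later steps.

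\textbf{Step 2: interior second derivatives.} The rescaled function satisfies
\[
\Delta u_r=|x|\chi_{\{u_r>0\}} \quad\text{in } B_{1/r},
\]
because $\Delta u_r(x)=r^{-1}\,r|x|\chi=|x|\chi$. The right-hand side is therefore bounded by $3$ on $B_3$ and Lipschitz inside $\{u_r>0\}$. Fix $x_0\in\{u_r>0\}\cap B_1$ and set $d=\operatorname{dist}(x_0,\partial\{u_r>0\})$.
\begin{itemize}
\item If $d$ is bounded below (say $d\ge 1/2$), interior Schauder estimates on $B_{1/2}(x_0)$, combined with the sup bound from Step 1, give $|D^2u_r(x_0)|\le C$.
\item If $d$ is small, apply Lemma \ref{L4.1} to $u_r$. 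The lemma is scale invariant in the sense that it follows from \eqref{grow}, and \eqref{grow} rescales correctly since $f(rx)=r|x|$. We get $u_r\le Cd^2(|x_0|+d)\le Cd^2$ in $B_d(x_0)$. Then rescale $v(y)=u_r(x_0+dy)/d^2$ on $B_1$. It satisfies $\Delta v=|x_0+dy|$, which is bounded by $3$ with Lipschitz constant at most $d\le1$, and $\|v\|_{L^\infty}\le C$. Schauder estimates give $|D^2v(0)|\le C$, hence $|D^2u_r(x_0)|\le C$.
\end{itemize}
In either case $D^2u_r$ is uniformly bounded on $\{u_r>0\}\cap B_1$. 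Note that $5d\le$ the available radius holds because the relevant balls lie in $B_3\subset B_{1/r}$.

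\textbf{Step 3: gluing across the free boundary.} On the interior of $\{u_r=0\}$ we have $D^2u_r=0$. Since $u_r\in W^{2,p}_{loc}$ and $D^2u_r=0$ a.e. on $\{u_r=0\}$, it follows that $\|D^2u_r\|_{L^\infty(B_1)}\le C$. The $L^\infty$ bound from Step 1 and interpolation then control $\nabla u_r$, giving \eqref{m1}.

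The main obstacle is Step 2 near the free boundary. The Schauder estimates must be uniform: one has to check that the constants in Lemma \ref{L4.1} and \eqref{grow} survive the rescaling $u\mapsto u_r$. This works because the growth bound $r^2(r+|x_0|)$ is exactly compatible with cubic scaling and the weight $|x|$.
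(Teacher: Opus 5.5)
Your proof is correct. It shares the paper's key input but uses it differently. That input is the growth bound $u\le Cd^2(|x|+d)$ on $B_d(x)$ from Lemma \ref{L4.1}, which you rightly note survives the rescaling $u\mapsto u_r$ because $f(rx)=r|x|$.

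\textbf{The paper's route.} It works with $u$ itself on $B_r$ and subtracts the explicit particular solution $P_x$ built from $h=\frac{1}{9}|x|^3$. Then $u-P_x$ is harmonic in $B_d(x)$, which yields the pointwise gradient bound $|\nabla u(x)|\le Cd(|x|+d)$. It then estimates the quotient $|\nabla u(x)-\nabla u(y)|/|x-y|$ by two cases, according to whether $B_{|x-y|}(z)$, with $z=\frac{x+y}{2}$, lies in $\{u>0\}$. In the first case it uses Schauder estimates for $u-P_z$; in the second it uses the gradient bound together with $d_x,d_y\le 2|x-y|$.

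\textbf{Your route.} You bound $D^2u_r$ pointwise in $\{u_r>0\}$ by rescaled Schauder estimates. You then extend to all of $B_1$ using $u_r\in W^{2,p}_{loc}$ and the fact that $D^2u_r=0$ a.e. on $\{u_r=0\}$. That fact follows from the Stampacchia lemma applied to $\partial_iu_r$, because $\nabla u_r=0$ on the contact set, $u_r\ge0$ being $C^1$. This covers the free boundary too, so no measure-zero information on $\Gamma_u$ is needed. Hence $u_r\in W^{2,\infty}(B_1)$, and since $B_1$ is convex, $[\nabla u_r]_{C^{0,1}(B_1)}\le\|D^2u_r\|_{L^\infty(B_1)}$.

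\textbf{Comparison.} Your argument is shorter and avoids the case analysis, at the cost of relying on $W^{2,p}$ theory and the a.e. vanishing of second derivatives. The paper's argument is elementary and also records the scale-refined gradient decay $|\nabla u(x)|\le Cd(|x|+d)$ near the free boundary.

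\textbf{Two small corrections.}
\begin{itemize}
\item Since $0\in\partial\{u_r>0\}$, you always have $d\le|x_0|<1$, so your separate case $d\ge 1/2$ is unnecessary.
\item The balls $B_{5d}(x_0)$ required by Lemma \ref{L4.1} lie in $B_6$, not $B_3$. This is precisely why the hypothesis $r<r_0/6$ is needed: it gives $B_{5rd}(rx_0)\subset B_{6r}\subset B_{r_0}\subset B_1$.
\end{itemize}
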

\begin{proof}
 Fix $x\in \{ u>0 \}\cap B_{\frac{r_0}{6}}$. And define $d:=dist(x,\partial\{ u=0 \} )$. It's obvious that $d\le |x|$ and $B_{5d}(x)\subset B_{5d+|x|}(0)\subset B_{6|x|}(0)\subset B_{r_0}(0)$. Thus, by the optimal growth estimate \eqref{grow}, one have
\begin{equation*}
u(y)\le Cd^2(|x|+d  ) \quad\textrm{for all}\quad y\in B_d(x),
\end{equation*}
which implies that
\begin{equation}\label{m2}
||u||_{L^\infty (B_d(x))}\le Cd^2(|x|+d).
\end{equation}
Now, for every $x\in B_r\cap \{u>0\}$ with $0<r<\frac{r_0}{6}$, thanks to \eqref{m2} and $d\le |x| $ we have
\begin{equation*}
\begin{split}
|u(x)|&\le ||u||_{L^\infty B_d(x)} \\
&\le Cd^2(|x|+d)\\
&\le  C|x|^3\\
&< Cr^3.
\end{split}
\end{equation*}
This says that
\begin{equation}\label{m3}
||u||_{L^\infty (B_r)}\le Cr^3.
\end{equation}
In order to establish gradient estimates, we firstly define
\begin{equation*}
h(x):=\frac{1}{9}|x|^3, \quad x\in \mathbb{R}^2,
\end{equation*}
and
\begin{equation*}
P_{x_0}(x):=h(x)-h(x_0)-\nabla h(x_0)(x-x_0), \quad \textrm{for all}\quad x,\ x_0\in \mathbb{R}^2.
\end{equation*}
Let $V=(u-P_x)$, then for $x\in B_r\cap \{u>0\}$, we have
\begin{equation*}
\Delta V=0 \quad \textrm{in} B_d(x).
\end{equation*}
Therefore, by Schauder estimates we arrive at
\begin{equation}\label{m4}
\begin{split}
 |\nabla u(x)|=|\nabla V(x)|&\le\frac{C}{d}||V||_{L^\infty (B_d(x))}\\
 &\le \frac{C}{d}||u||_{L^\infty B_d(x)}+\frac{C}{d}||P_x||_{L^\infty (B_d(x))}.
\end{split}
\end{equation}
On the other hand, by a simple calculation we have
\begin{equation*}
\begin{split}
P_{x}(y)= \frac{1}{9} \left(|y|^3-|x|^3-3|x|x\cdot(y-x) \right) \quad\textrm{for all}\quad y\in \mathbb{R}^2.
\end{split}
\end{equation*}
Furthermore, we have
\begin{equation*}
\begin{split}
\left||y|^3-|x|^3-3|x|x\cdot(y-x) \right|=6\left| \int_{0}^{1}(1-t)(|x+t(y-x)||y-x|^2)dt \right|
\end{split}
\end{equation*}
Since $|x+t(y-x)|\le |x|+|y-x|$,  one can see that
\begin{equation}\label{m5}
\begin{split}
|P_{x}(y)|&\le \frac{2}{3}(|x|+|y-x|)|y-x|^2 \int_{0}^{1}(1-t)dt\\
&\le |y-x|^2(|x|+|y-x|).
\end{split}
\end{equation}
Putting \eqref{m2}, \eqref{m4} and \eqref{m5} together, we obtain
\begin{equation}\label{mm6}
\begin{split}
|\nabla u(x)|&\le Cd(|x|+d  )\\
&\le 2Cr^2.
\end{split}
\end{equation}
which implies
\begin{equation}\label{m6}
||\nabla u||_{L^\infty (B_r)}\le Cr^2.
\end{equation}

Finally, it remains to show that $[\nabla u]_{C^{0,1}(B_r)}\le Cr$. To this end, we fix $x,y\in B_r$
and split the proof into two case.

$(\romannumeral 1)$ $B_{|x-y|}(z)\subset \{u>0\}$ with $z=\frac{1}{2}(x+y)$.\\
In this case, we have $t:=dist(z,\partial \{u=0\})\ge |x-y|$. Meanwhile, we have
\begin{equation}\label{m7}
\begin{split}
\frac{|\nabla u(x)-\nabla u(y)|}{|x-y|}&\le [\nabla u]_{C^{0,1}(B_{|x-y|/2}(z))}\le  [\nabla u]_{C^{0,1}(B_{t/2}(z))}\\
&\le [\nabla (u -P_z)]_{C^{0,1}(B_{t/2}(z))} +[\nabla P_z]_{C^{0,1}(B_{t/2}(z))}.
\end{split}
\end{equation}
As similar to \eqref{m4}, and with the help of \eqref{m2} and \eqref{m5}, we obtain
\begin{equation}\label{m8}
\begin{split}
[\nabla (u -P_z)]_{C^{0,1}(B_{t/2}(z))}&\le \frac{C}{t^2} ||u -P_z||_{L^\infty (B_{t/2}(z))}\\
&\le  C(t+|z| )\\
&\le 2Cr.
\end{split}
\end{equation}
On the other hand, by a simple calculation, we have
\begin{equation*}
|D^2P_{x_0}(x)|_M= |D^2h(x)|_M=\frac{1}{3}\left| |x|I+\frac{x_1\otimes x_2  }{|x|}  \right|_M=\frac{\sqrt{5}}{3}|x| \quad \textrm{for all}\quad x,\ x_0\in \mathbb{R}^2.
\end{equation*}
Here $|\cdot|_M$ denotes the matrix norm. Therefore, one can derive that
\begin{equation*}
|D^2P_{z}(\cdot)|_M\le \frac{\sqrt{5}}{3}(t+|z| )\le \frac{2\sqrt{5}}{3}r \quad \textrm{in}\quad B_{t/2}(z),
\end{equation*}
Hence we arrive at
\begin{equation*}
||D^2P_z ||_{L^\infty (B_{t/2}(z))}\le \frac{2\sqrt{5}}{3}r,
\end{equation*}
which implies that
\begin{equation}\label{m9}
\begin{split}
[\nabla P_z]_{C^{0,1}(B_{t/2}(z))}\le Cr
\end{split}
\end{equation}
Combining \eqref{m7},\eqref{m8} and \eqref{m9}, we have
\begin{equation}\label{m10}
\frac{|\nabla u(x)-\nabla u(y)|}{|x-y|}\le Cr.
\end{equation}

$(\romannumeral 2)$ $B_{|x-y|}(z)\cap \{u=0\}\neq\emptyset$.\\
Applying \eqref{mm6}, we have
\begin{equation*}
\begin{split}
\frac{|\nabla u(x)-\nabla u(y)|}{|x-y|}&\le \frac{|\nabla u(x)|+|\nabla u(y)|}{|x-y|}\\
&\le \frac{C\left(d_x(|x|+d_x)+d_y(|y|+d_y)\right)}{|x-y|}
\end{split}
\end{equation*}
Here $d_x=dist(x,\partial\{u=0\}), d_y=dist(y,\partial\{u=0\})$. Note that in this case, we have
$d_x\le 2|x-y|$ and $d_y\le 2|x-y|$. Therefore, we can derive that
\begin{equation}\label{m11}
\begin{split}
\frac{|\nabla u(x)-\nabla u(y)|}{|x-y|}&\le C(|x|+|y| )\le Cr.
\end{split}
\end{equation}

Since we can do this for each $x,y\in B_r$ with $x\neq y$, it follows that
\begin{equation}\label{m12}
[\nabla u]_{C^{0,1}(B_r)}\le Cr.
\end{equation}
Note that $||u_r||_{L^\infty (B_1)}=\frac{1}{r^3}||u||_{L^\infty (B_r)}$, $||\nabla u_r||_{L^\infty (B_1)}=\frac{1}{r^2}||\nabla u||_{L^\infty (B_r)}$
and $[\nabla u_r]_{C^{0,1}(B_1)}=\frac{1}{r}[\nabla u]_{C^{0,1}(B_r)}$, we obtain from \eqref{m3}, \eqref{m6} and \eqref{m12}  the desired conclusion.
\end{proof}

Next, we give some basic properties regarding to the monotonicity formula \eqref{f7}.
\begin{lem}\label{L4.3}
Let $u$ be any solution to \eqref{f6}. Then we have
\begin{equation}\label{m13}
\frac{d}{dr} \Phi(r,u)=\frac{2}{r^5}\int_{\partial B_1}\left| \nabla u(rx)\cdot x -\frac{3u(rx)}{r}  \right|^2d\mathcal{H}\ge0.
\end{equation}
For $u$ a third-order homogeneous solution of \eqref{f6}, we  have
\begin{equation}\label{m14}
\Phi(1,u)=\int_{B_1}|x|u^+dx.
\end{equation}
\end{lem}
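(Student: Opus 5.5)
We prove Lemma \ref{L4.3} by the standard Weiss computation, adapted to the weight $|x|$ and the scaling exponent $3$. Here $\Phi(r)=\Phi(r,u)$ is taken at $x_0=0$ (the degenerate point), so $|x|$ is invariant under the rescaling $x\mapsto rx$. We first rescale: with $u_r(x)=u(rx)/r^3$ we have $\nabla u_r(x)=\nabla u(rx)/r^2$, and the substitution $y=rx$ gives
\begin{equation*}
\Phi(r)=\frac{1}{r^6}\int_{B_r}\left(|\nabla u|^2+2|y|u^+\right)dy-\frac{3}{r^7}\int_{\partial B_r}u^2d\mathcal{H}
=\int_{B_1}\left(|\nabla u_r|^2+2|x|u_r^+\right)dx-3\int_{\partial B_1}u_r^2d\mathcal{H}.
\end{equation*}
This is exactly $M(u_r)$. (I read the prefactor in \eqref{f7} as $r^{-6}$, the scale-invariant normalization; this is the one compatible with \eqref{m14} and with $\Phi(0+)=\pi/81$.)

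Next we differentiate in $r$. Set $\dot u_r:=\frac{d}{dr}u_r=\frac{1}{r}\left(\nabla u(rx)\cdot x-\frac{3u(rx)}{r}\right)\cdot\frac{1}{r^{2}}$. Equivalently, $\dot u_r=\frac1r\left(x\cdot\nabla u_r-3u_r\right)$. Then
\begin{equation*}
\frac{d}{dr}\Phi(r)=2\int_{B_1}\left(\nabla u_r\cdot\nabla\dot u_r+|x|\chi_{\{u_r>0\}}\dot u_r\right)dx-6\int_{\partial B_1}u_r\dot u_r\,d\mathcal{H}.
\end{equation*}
Differentiating $\int|x|u_r^+$ under the integral is justified because $u_r\in C^{1,1}$ by Lemma \ref{L4.2} (or locally, by the standard $C^{1,1}_{loc}$ regularity). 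It also uses $\dot u_r=0$ a.e. on $\{u_r=0\}$, so $\frac{d}{dr}u_r^+=\chi_{\{u_r>0\}}\dot u_r$ a.e. Integrating by parts and using $\Delta u_r=|x|\chi_{\{u_r>0\}}$, the interior terms cancel, leaving
\begin{equation*}
\frac{d}{dr}\Phi(r)=2\int_{\partial B_1}\dot u_r\left(\partial_\nu u_r-3u_r\right)d\mathcal{H}.
\end{equation*}
On $\partial B_1$ we have $\partial_\nu u_r=x\cdot\nabla u_r$, so $\partial_\nu u_r-3u_r=r\dot u_r$, and hence $\frac{d}{dr}\Phi=2r\int_{\partial B_1}\dot u_r^2\,d\mathcal H$. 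Substituting back $\dot u_r=r^{-3}\left(\nabla u(rx)\cdot x-3u(rx)/r\right)$ gives exactly $\frac{2}{r^5}\int_{\partial B_1}\left|\nabla u(rx)\cdot x-\frac{3u(rx)}{r}\right|^2d\mathcal{H}\ge0$, which is \eqref{m13}.

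For \eqref{m14}, let $u$ be $3$-homogeneous. Then $x\cdot\nabla u=3u$, and we integrate by parts:
\begin{equation*}
\int_{B_1}|\nabla u|^2=\int_{\partial B_1}u\,\partial_\nu u-\int_{B_1}u\Delta u=3\int_{\partial B_1}u^2-\int_{B_1}|x|u^+ .
\end{equation*}
Here we used $u\Delta u=|x|u\chi_{\{u>0\}}=|x|u^+$. Plugging this into $\Phi(1,u)$ gives $\Phi(1,u)=2\int|x|u^+-\int|x|u^+=\int_{B_1}|x|u^+dx$.

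The main obstacle is rigor in the differentiation step. We need absolute continuity of $r\mapsto\Phi(r)$ and the a.e. identity for $\frac{d}{dr}u_r^+$; both follow from $W^{2,\infty}$ regularity and the fact that $\nabla u=0$ a.e. on $\{u=0\}$. The integration by parts with the discontinuous right-hand side $|x|\chi_{\{u>0\}}$ is also fine, since $u_r\in W^{2,p}$.
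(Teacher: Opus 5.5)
Your proposal is correct and follows the paper's proof essentially line by line. You write $\Phi(r)=\Phi(1,u_r)$, differentiate, and integrate by parts with $\Delta u_r=|x|\chi_{\{u_r>0\}}$, so that only the boundary term $2\int_{\partial B_1}(\partial_\nu u_r-3u_r)\dot u_r\,d\mathcal{H}$ survives; for \eqref{m14} you use $x\cdot\nabla u=3u$ and one integration by parts, exactly as the paper does. Your reading of the prefactor in \eqref{f7} as $r^{-6}$ rather than $r^{-1}$ is also right: it is the normalization that the paper's own identity $\Phi(r,u)=\Phi(1,u_r)$ and its later formula for $e'(r)$ actually use.
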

\begin{proof}
 Firstly, integrating by parts, it is easy to check that
\begin{equation}\label{m15}
\begin{split}
\frac{d}{dr} \Phi(r,u)&=\frac{d}{dr}\Phi(1,u_r)=2\int_{B_1}\nabla u_r\cdot\nabla\left( \frac{\nabla u(rx)\cdot x}{r^3}-\frac{3u(rx)}{r^4}  \right)\\
&+2|x|\chi_{ \{u_r>0\}}\left( \frac{\nabla u(rx)\cdot x}{r^3}-\frac{3u(rx)}{r^4}  \right)dx\\
&\quad-3\int_{\partial B_1}2u_r \left( \frac{\nabla u(rx)\cdot x}{r^3}-\frac{3u(rx)}{r^4}  \right)d\mathcal{H}\\
&=2\int_{B_1}\left(|x|\chi_{ \{u_r>0\}}-\Delta u_r\right)\left( \frac{\nabla u(rx)\cdot x}{r^3}-\frac{3u(rx)}{r^4}  \right)dx\\
&\quad+2\int_{\partial B_1}(\nabla u_r\cdot x-3u_r )\left( \frac{\nabla u(rx)\cdot x}{r^3}-\frac{3u(rx)}{r^4}  \right)d\mathcal{H}\\
&=2\int_{\partial B_1}(\nabla u_r\cdot x-3u_r )\left( \frac{\nabla u(rx)\cdot x}{r^3}-\frac{3u(rx)}{r^4}  \right)d\mathcal{H}\\
&=\frac{2}{r^5}\int_{\partial B_1}\left| \nabla u(rx)\cdot x -\frac{3u(rx)}{r}  \right|^2d\mathcal{H}.
\end{split}
\end{equation}
Finally, as similar to the previous proof, we have
\begin{equation*}
\begin{split}
\Phi(1,u)&=\int_{B_1}\left(|\nabla u|^2+2|x| u^+\right)dx-3\int_{\partial B_1} u^2d\mathcal{H}\\
&=\int_{ B_1}(2|x|u\chi_{\{u>0\}}-u\Delta u)dx+\int_{\partial B_1}(u\partial_{\nu}u-3u^2)d\mathcal{H}\\
&=\int_{ B_1}|x|u\chi_{\{u>0\}}dx
\end{split}
\end{equation*}
provided $u$ is a third-order homogeneous solution.
\end{proof}

Now, we give a direct corollary of Lemma \ref{L4.3}. Since the proof is fairly standard, we will omit it here (see \cite{Wei1}).
\begin{cor}\label{Coro4.4}
Let $u$ be any solution to \eqref{f6}. Then, any blow-up of $u$ at $0$ is homogeneous of degree $3$.
\end{cor}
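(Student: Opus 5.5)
The plan is the standard Weiss argument: monotonicity gives a finite limit, scaling of the energy identity forces every blow-up to have vanishing radial defect, and a vanishing radial defect means homogeneity of degree $3$.

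\emph{Compactness and existence of the limit.} By Lemma \ref{L4.2}, the family $\{u_r\}_{0<r<r_0/6}$ is bounded in $C^{1,1}(B_1)$. The same argument applied on larger balls, using $\|u\|_{L^\infty(B_R)}\le CR^3$ and the corresponding gradient and Lipschitz bounds, shows that for every $R>0$ the family is bounded in $C^{1,1}(B_R)$ once $r$ is small. By Arzel\`a--Ascoli, any sequence $r_j\to0$ has a subsequence with $u_{r_j}\to u_0$ in $C^{1,\beta}_{loc}(\mathbb{R}^2)$. By Lemma \ref{L4.3}, $\Phi(r,u)$ is nondecreasing in $r$. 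It is also bounded below, since by \eqref{m3} and \eqref{m6} each term of $\Phi(r,u)=\Phi(1,u_r)$ is $O(1)$. Hence $\Phi(0+,u)$ exists and is finite.

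\emph{Scaling of the energy identity.} First check $\Phi(\rho,u_r)=\Phi(r\rho,u)$. The substitution $x=ry$ gives $|\nabla u_r|^2=r^{-4}|\nabla u(ry)|^2$, $|y|u_r^+=r^{-4}|ry|u^+(ry)$ and $dx=r^2dy$, so the bulk term picks up $r^{-6}$ against $\rho^{-1}$. The boundary term behaves the same way thanks to the $r^{-7}$ normalization. Integrating \eqref{m13} over $[\rho_1,\rho_2]\subset(0,R)$ and changing variables then gives
\begin{equation*}
\int_{\rho_1}^{\rho_2}\frac{2}{\rho^5}\int_{\partial B_1}\Big|\nabla u_{r_j}(\rho x)\cdot x-\frac{3u_{r_j}(\rho x)}{\rho}\Big|^2d\mathcal{H}\,d\rho=\Phi(r_j\rho_2,u)-\Phi(r_j\rho_1,u).
\end{equation*}
The right-hand side tends to $\Phi(0+)-\Phi(0+)=0$ as $j\to\infty$.

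\emph{Passing to the limit and homogeneity.} Uniform $C^1$ convergence on compact sets lets me pass to the limit in the integrand. This gives $\nabla u_0(y)\cdot y=3u_0(y)$ for a.e.\ $y$ in each annulus $B_{\rho_2}\setminus B_{\rho_1}$, hence everywhere in $\mathbb{R}^2\setminus\{0\}$ by continuity. Equivalently, $\frac{d}{dt}\big(t^{-3}u_0(ty)\big)=0$, so $u_0(ty)=t^3u_0(y)$, i.e.\ $u_0$ is homogeneous of degree $3$.

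It remains to see that $u_0$ solves \eqref{f6} on $\mathbb{R}^2$. The bound $u_0\ge0$ passes to the limit. For the equation, $\Delta u_{r}=|x|\chi_{\{u_r>0\}}$, since the rescaled forcing is $r^{-1}|rx|=|x|$. Write $\chi_{\{u_{r_j}>0\}}\rightharpoonup\chi^*$ weakly-$*$. Then $\chi^*=1$ on $\{u_0>0\}$ by uniform convergence. On the interior of $\{u_0=0\}$ we get $\Delta u_0=0$ as a distribution, so $|x|\chi^*=0$ there. The remaining part of $\{u_0=0\}$ has $D^2u_0=0$ a.e.\ on it ($u_0\in C^{1,1}$), so $\chi^*=0$ a.e.\ on $\{u_0=0\}$ away from the origin. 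Therefore $\Delta u_0=|x|\chi_{\{u_0>0\}}$ weakly.

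\emph{Main obstacle.} The genuine work is the uniform $C^{1,1}$ control of $u_r$ on arbitrarily large balls, which is only stated on $B_1$ in Lemma \ref{L4.2}. I would rerun the proof of Lemma \ref{L4.2} with $B_{Rr}$ in place of $B_r$. The estimates \eqref{m3}, \eqref{m6} and \eqref{m12} only use $d\le|x|$ and the growth bound \eqref{grow}, so they are scale-invariant. The regularity hypothesis on $0$ is not actually needed for this step.
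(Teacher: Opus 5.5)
Your proposal is correct and is precisely the standard Weiss argument (monotone, bounded $\Phi$; the rescaled energy identity forcing $\nabla u_0\cdot x=3u_0$), which the paper cites instead of writing out a proof. Three minor points: the $C^{1,1}(B_R)$ bound follows at once from Lemma \ref{L4.2} via $u_r(x)=R^3u_{Rr}(x/R)$, so no rerun of that proof is needed; the only property of $0$ you actually use is $u(0)=0$ (so that $d\le|x|$); and $\Phi(\rho,u_r)=\Phi(r\rho,u)$ holds only with the bulk normalization $r^{-6}$, not the $r^{-1}$ printed in \eqref{f7}.
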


\section{Classification of global solutions and uniqueness of blowups} \label{Classification and uniqueness}

Next, we want to classify all possible blow-up profiles for the solutions to \eqref{f6}. To this end,
we want to find all solutions of following system
\begin{equation}\label{g1}
\left\{
\begin{split}
&\Delta u=|x|\chi_{\{u>0\}} \textup{ in $\mathbb{R}^2$},\\
&u\,\,\, \textup{third-order homogeneous},\\
&u\ge0,\\
&u\in C^{1,1}_{loc}(\mathbb{R}^2).
\end{split}
\right.
\end{equation}

Let $u$ be any solution to \eqref{f6}, by  Lemma \ref{L4.2} one can derive that there is a subsequence $r_k\to0$
such that $u_{r_k}\to u_0$ in $C^1_{loc}(\mathbb{R}^2)$. Here $u_0$ is a blow-up limit of $u$ at $0$.
Invoke all above lemmas, we have $u_0$ solves \eqref{g1} in the sense of distributions.

On the other hand,  the solutions of \eqref{g1} are the nonnegative solutions of the following  no-sign obstacle problem (see \cite{Pet})
\begin{equation}\label{g2}
\left\{
\begin{split}
&\Delta u=|x|\chi_{\{\Omega(u)\}} \textup{ in $\mathbb{R}^2$},\\
&u\,\,\, \textup{third-order homogeneous},\\
&\Omega(u)= \{u=|\nabla u|=0  \}^c,\\
&u\in C^{1,1}_{loc}(\mathbb{R}^2).
\end{split}
\right.
\end{equation}
Therefore, our goal is to find all the solutions of \eqref{g2}, and then eliminate those that may become negative at some point.
In fact, we don't care the trivial case $u\equiv0$ in \eqref{g2}. Hence by the homogeneity of $u$, we obtain that $\Omega(u)$
can only be an open cone in $\mathbb{R}^2\verb+\+ \{0 \}$, i.e., $\Omega(u)=\mathbb{R}^2\verb+\+ \{0 \}:=\Omega_1(u)$  or $\Omega(u)=\{x=re^{i\theta}| \theta_1<\theta<\theta_2\le\theta_1+2\pi, r>0\}:=\Omega_2(u)$
or the union of disjoint open cones. For simplicity, we sometimes denote $\Omega_2(u)$ as  $\left\{\theta:\theta_1<\theta<\theta_2 \right\}$
Note that in the latter two cases, we have that $u=|\nabla u|=0$ holds just on the boundary of the cone.
\\

Now, we first consider the second scenario, where $\Omega(u)$ consists of a single open cone $\Omega_2(u)$.
\begin{lem}\label{L5.1}
Let $u$ be any solution to \eqref{g2} with an open cone $\Omega(u)=\Omega_2(u)$ if and only if $\theta_1\in [0,2\pi)$ and $\theta_2=\theta_1+\frac{2\pi}{3}$ such that
\begin{equation}\label{solu1}
u=\frac{r^3}{9}(1-\cos3(\theta-\theta_1)) \quad\textrm{in}\quad \Omega(u).
\end{equation}
\end{lem}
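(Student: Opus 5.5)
Since $u$ is homogeneous of degree $3$, I will write $u=r^3y(\theta)$ on the cone $\Omega_2(u)=\{\theta_1<\theta<\theta_2\}$ and turn the equation into an ODE. In polar coordinates,
\begin{equation*}
\Delta(r^3y(\theta))=r\bigl(y''(\theta)+9y(\theta)\bigr),
\end{equation*}
so on the cone the equation $\Delta u=|x|=r$ becomes $y''+9y=1$. The general solution is
\begin{equation*}
y(\theta)=\tfrac19+A\cos3\theta+B\sin3\theta=\tfrac19+\rho\cos3(\theta-\alpha),
\end{equation*}
with $\rho\ge0$. This formula holds on $(\theta_1,\theta_2)$ and extends continuously to the endpoints, since $u\in C^{1,1}_{loc}$.

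Next I impose the free boundary conditions. On the boundary rays we have $u=|\nabla u|=0$. Because $\nabla u=r^2(3y\,e_r+y'\,e_\theta)$, this is equivalent to
\begin{equation*}
y(\theta_i)=y'(\theta_i)=0,\qquad i=1,2.
\end{equation*}
From $y(\theta_1)=0$ we get $\rho\cos3(\theta_1-\alpha)=-\frac19$, and from $y'(\theta_1)=0$ we get $\rho\sin3(\theta_1-\alpha)=0$. These force $\rho=\frac19$ and $\cos3(\theta_1-\alpha)=-1$. Hence
\begin{equation*}
y(\theta)=\tfrac19\bigl(1-\cos3(\theta-\theta_1)\bigr).
\end{equation*}
The zeros of this $y$ are exactly $\theta_1+\frac{2k\pi}{3}$, and each is a double zero, so $y'$ vanishes there too. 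Since $\Omega(u)$ is exactly the cone and $y>0$ strictly inside it, the second endpoint must be the first zero after $\theta_1$. That gives $\theta_2=\theta_1+\frac{2\pi}{3}$ and establishes \eqref{solu1}. Rotating, we may take $\theta_1\in[0,2\pi)$.

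For the converse, I set
\begin{equation*}
u=\tfrac{r^3}{9}\bigl(1-\cos3(\theta-\theta_1)\bigr)\chi_{\{\theta_1<\theta<\theta_1+2\pi/3\}}.
\end{equation*}
Inside the cone, $\Delta u=r$ by the ODE computation above, and outside the cone $u\equiv0$. On the two boundary rays, $u$ and $\nabla u$ vanish, so $u\in C^{1,1}_{loc}(\mathbb{R}^2)$. The distributional Laplacian therefore has no singular part on the rays, and $u$ solves \eqref{g2} with $\Omega(u)=\Omega_2(u)$.

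The main obstacle is justifying the boundary conditions $y=y'=0$ at both endpoints. This is where $C^{1,1}$ regularity and the definition $\Omega(u)=\{u=|\nabla u|=0\}^c$ enter: boundary points of $\Omega$ lie in the coincidence set, where both $u$ and $\nabla u$ vanish. One also has to rule out a cone of full opening $2\pi$. That case is excluded because $y$ cannot vanish only at the single ray $\theta_1\equiv\theta_2$ while still solving the ODE: $y$ is $2\pi/3$-periodic and so has an interior zero at $\theta_1+\frac{2\pi}{3}$.
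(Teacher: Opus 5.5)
Your proposal is correct and follows essentially the same route as the paper. The paper writes $u-\frac19|x|^3=r^3(A\cos3\theta+B\sin3\theta)$ in the cone and encodes $u=|\nabla u|=0$ on the boundary rays through the complex quantity $\varpi_u(\theta)=u(e^{i\theta})-\frac{i}{3}\partial_\theta u(e^{i\theta})$, which is exactly your pair of conditions $y(\theta_i)=y'(\theta_i)=0$; like you, it then rules out $\theta_2=\theta_1+\frac{4\pi}{3}$ and $\theta_2=\theta_1+2\pi$ via the interior double zero at $\theta_1+\frac{2\pi}{3}$, and your explicit check of the converse (the truncated profile is $C^{1,1}_{loc}$ and solves \eqref{g2}) is a small addition the paper leaves implicit.
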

\begin{proof}
Define $V(x):=u(x)-\frac{1}{9}|x|^3$, we have $\Delta V=0$  in  $\Omega(u)$. By the homogeneity of $u$,
we have  $V(x)=r^3(A\cos3\theta+B\sin3\theta )$, where $A, B$ are arbitrary constants. Furthermore, we have $u(x)=u(re^{i\theta})=r^3(A\cos3\theta+B\sin3\theta )+\frac{1}{9}r^3$ in $\Omega(u)$.
Hence $u(x)|_{\partial B_1}=u(e^{i\theta})=A\cos3\theta+B\sin3\theta +\frac{1}{9}$, and $\partial_{\theta}u(e^{i\theta})=-3A\sin3\theta+3B\cos3\theta$. For simplicity,
let us define $\varpi_{u}(\theta):=u(e^{i\theta})-\frac{i}{3}\partial_{\theta}u(e^{i\theta})$.
It's easy to check that
\begin{equation}\label{g3}
\varpi_{u}(\theta)=(A-Bi)e^{i3\theta}+\frac{1}{9}.
\end{equation}
As we noted earlier, $u=|\nabla u|=0$ holds just on the boundary $\{ re^{i\theta_1}\}$ and $\{ re^{i\theta_2}\}$, this is equivalent to
$\varpi_{u}(\theta_1)=\varpi_{u}(\theta_2)=\mathbf{0}\in \mathbb{C}$, and $\varpi_{u}(\theta)\neq\mathbf{0}$ for $\theta_1<\theta<\theta_2\le\theta_1+2\pi$.
Firstly, inserting $\varpi_{u}(\theta_1)=\mathbf{0}$ in \eqref{g3}, we have
\begin{equation}\label{g3b}
A-Bi=-\frac{1}{9}e^{-3\theta_1 i}
\end{equation}
Hence we have
\begin{equation}\label{g4}
\varpi_{u}(\theta)=\frac{1}{9}\left(1-e^{3(\theta-\theta_1)i}  \right).
\end{equation}
Similarly, inserting  $\varpi_{u}(\theta_2)=\mathbf{0}$ in \eqref{g4}, one can derive that
\begin{equation}\label{g5}
\theta_2=\theta_1+\frac{2k\pi}{3} \quad\textrm{for all}\quad k\in \mathbb{Z}.
\end{equation}
Note that $u(e^{i\theta})$ is a $2\pi$-periodic function, hence one can restrict $\theta_1\in[0,2\pi)$.
On the other hand, $\theta_2\le \theta_1+2\pi$, hence one have $k=1,2,3$. Furthermore, we can exclude the cases where $k=2$ and $k=3$.
Otherwise we can always find the point $\underline{\theta}=\theta_1+\frac{2\pi}{3}\in (\theta_1,\theta_2)$ such that
$\varpi_{u}(\underline{\theta})=0$, which contradicts the definition of $\Omega_2(u)$.
With the aid of \eqref{g3b}, it is easy to see that
\begin{equation}\label{g6}
u(e^{i\theta})=\frac{1}{9}(1-\cos3(\theta-\theta_1)).
\end{equation}
This completes the proof.
\end{proof}

With Lemma \ref{L5.1} in hand, we can obtain all solutions of \eqref{g1}.
\begin{cor}\label{Coro5.1}
Let $u$ be any solution to \eqref{g1} with an open cone $\Omega(u)=\Omega_2(u)$ if and only if $\theta_1\in [0,2\pi)$ and $\theta_2=\theta_1+\frac{2\pi}{3}$ such that
\begin{equation}\label{solu2}
u=\frac{r^3}{9}(1-\cos3(\theta-\theta_1))  \quad\textrm{for all}\quad \theta_1<\theta<\theta_2.
\end{equation}
\end{cor}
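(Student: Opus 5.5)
The plan is to derive the corollary from Lemma \ref{L5.1}, using the observation already made above that solutions of \eqref{g1} are exactly the nonnegative solutions of \eqref{g2}. We must check two directions.

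\emph{First direction.} Let $u$ solve \eqref{g1} with $\Omega(u)=\Omega_2(u)$. Since $u\ge0$, $u\in C^{1,1}_{loc}$ and $\Delta u=|x|\chi_{\{u>0\}}$, we have $\{u>0\}\subset\Omega(u)$. Conversely, at points of $\{u=0\}$ the function $u$ attains its minimum, so $\nabla u=0$ there. Hence $\{u=0\}=\{u=|\nabla u|=0\}$ and $\chi_{\{u>0\}}=\chi_{\Omega(u)}$, so $u$ solves \eqref{g2} with the same single open cone. Lemma \ref{L5.1} then forces $\theta_2=\theta_1+\frac{2\pi}{3}$ and gives formula \eqref{solu2} on the cone; outside the cone $u\equiv0$.

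\emph{Converse direction.} Take $u$ defined by \eqref{solu2} in the sector $\{\theta_1<\theta<\theta_1+\frac{2\pi}{3}\}$ and $u=0$ elsewhere. We must verify that it is a genuine solution of \eqref{g1}.
\begin{enumerate}
\item \emph{Nonnegativity.} This is immediate from $1-\cos3(\theta-\theta_1)\ge0$, with strict positivity inside the open sector because $3(\theta-\theta_1)\in(0,2\pi)$ there.
\item \emph{Regularity.} For $C^{1,1}_{loc}$ across the rays $\theta=\theta_1$ and $\theta=\theta_2$: both $u$ and $\partial_\theta u=\frac{r^3}{3}\sin3(\theta-\theta_1)$ vanish there, matching the zero extension. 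Since the radial derivative is $3u/r$, the full gradient is continuous across the rays. Second derivatives are bounded on each side, so $u\in C^{1,1}_{loc}(\mathbb{R}^2\setminus\{0\})$. Homogeneity of degree $3$ then gives $C^{1,1}$ near the origin as well.
\item \emph{The equation.} Inside the sector, $\Delta\bigl(\frac{r^3}{9}\bigr)=|x|$ and $r^3\cos3\theta$ is harmonic, so $\Delta u=|x|=|x|\chi_{\{u>0\}}$. Outside, $\Delta u=0$. Because $u\in C^{1,1}$, there is no singular part on the rays or at the origin, so the equation holds in the sense of distributions and a.e.
\end{enumerate}

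\emph{Expected obstacle.} The only delicate point is the identification $\{u>0\}=\Omega(u)$ in the first direction. This is handled by nonnegativity together with the $C^1$ minimum argument above; everything else is a direct substitution into Lemma \ref{L5.1}.
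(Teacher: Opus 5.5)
Your proposal is correct and follows the paper's route: nonnegativity gives $\{u>0\}=\Omega(u)$, so solutions of \eqref{g1} with a single cone are exactly the nonnegative solutions of \eqref{g2} covered by Lemma \ref{L5.1}. The only difference is in the converse: you verify directly that the explicit function solves \eqref{g1} (the $C^1$ matching across the two rays, the $C^{1,1}_{loc}$ bound, and $\Delta u=|x|$ in the sector). The paper instead relies on the ``if'' part of Lemma \ref{L5.1}, whose proof only establishes necessity, so your version is, if anything, more complete.
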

\begin{proof}
It follows that $u\ge0$ from \eqref{g6}. In this case, we have $\Omega(u)= \{u>0\}$. Hence the any solution of \eqref{g2} with an open cone $\Omega(u)=\Omega_2(u)$
solves \eqref{g1}.
\end{proof}

Next, we will consider the setting where the open cone consisting of a union of disjoint
connected open cones.

\begin{lem}\label{L5.2} Let $u$ be any solution to \eqref{g1} with the associated open cone
 consisting of a union of disjoint connected open cones. Then either of the following holds
\begin{flalign}\label{L11001}
&\quad(\romannumeral 1)\quad u=u_1+u_2, &&&
\end{flalign}
and
\begin{equation}\label{L001}
u_1=\frac{r^3}{9}(1-\cos3(\theta-\theta_1)) \quad\textrm{for all}\quad \theta_1<\theta<\theta_2,
\end{equation}
\begin{equation}\label{L002}
u_2=\frac{r^3}{9}(1-\cos3(\theta-(\theta_2+\sigma)))  \quad\textrm{for all}\quad \theta_2+\sigma<\theta<\theta_2+\sigma+\frac{2\pi}{3},
\end{equation}
where $0\le\theta_1<2\pi$,  $0<\sigma\le\frac{2\pi}{3}$.

\begin{flalign}\label{L11002}
&\quad (\romannumeral 2) \quad u=\sum\limits_{i=1}^{3}u_i, &&&
\end{flalign}
and
\begin{equation}\label{L003}
u_i=\frac{r^3}{9}(1-\cos3(\theta-\theta_i)) \quad\textrm{for all}\quad \theta_i<\theta<\theta_i+\frac{2\pi}{3},
\end{equation}
where $\theta_i=\theta_1+\frac{2(i-1)\pi}{3}$, $0\le\theta_1<2\pi$.
\end{lem}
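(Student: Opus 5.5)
The plan is to reduce to Lemma \ref{L5.1} one component at a time, and then count how many components of angular width $\frac{2\pi}{3}$ can fit disjointly in the circle. Let $\Omega(u)=\{u=|\nabla u|=0\}^c$ be the union of disjoint connected open cones $C_1,\dots,C_N$ with $N\ge2$; a priori $N$ could be infinite, and this has to be excluded. Fix one component $C_j=\{\theta_j<\theta<\theta_j'\}$. On $C_j$ the function $V=u-\frac{1}{9}|x|^3$ is harmonic and homogeneous of degree $3$, so $u(e^{i\theta})=A_j\cos3\theta+B_j\sin3\theta+\frac19$ there. On both bounding rays $u=|\nabla u|=0$, because these rays lie in the complement of $\Omega(u)$. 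Equivalently, in the notation of the proof of Lemma \ref{L5.1}, $\varpi_u(\theta_j)=\varpi_u(\theta_j')=0$ and $\varpi_u\neq0$ in between.

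The computation in Lemma \ref{L5.1} is purely local to one cone, so it applies verbatim. It gives $\theta_j'=\theta_j+\frac{2\pi}{3}$ and
\[
u=\frac{r^3}{9}\bigl(1-\cos3(\theta-\theta_j)\bigr)\quad\textrm{in } C_j .
\]
By Corollary \ref{Coro5.1} this profile is nonnegative, so it is compatible with \eqref{g1}. Outside $\Omega(u)$ we have $u\equiv0$. The function glued this way is automatically $C^{1,1}_{loc}$, since $u$ and $\nabla u$ vanish on the boundary rays and the second derivatives stay bounded.

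Every component therefore has angular width exactly $\frac{2\pi}{3}$, and the components are pairwise disjoint arcs of a circle of length $2\pi$. Hence $N\le3$, so in particular $N$ is finite. If $N=2$, label the components so that the first is $(\theta_1,\theta_2)$ with $\theta_2=\theta_1+\frac{2\pi}{3}$. Then the second starts at $\theta_2+\sigma$ for some gap $\sigma\ge0$. The total gap available is $2\pi-\frac{4\pi}{3}=\frac{2\pi}{3}$, which gives $\sigma\le\frac{2\pi}{3}$, and this is case (i). I will need to address $\sigma=0$: the two cones are then adjacent but separated by a ray on which $u=|\nabla u|=0$, so they remain disjoint open cones. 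This presumably should be allowed, or it may already be the case of three cones filling the circle, so I will check it against the statement's requirement $\sigma>0$. If $N=3$, the widths sum to $2\pi$, so there are no gaps and $\theta_i=\theta_1+\frac{2(i-1)\pi}{3}$, which is case (ii).

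The main obstacle is the justification that each component is a sector bounded by two rays on which both $u$ and $\nabla u$ vanish. I also need to exclude accumulation of infinitely many components, and the width argument does this once the width is known for every component. Since $\{u=|\nabla u|=0\}$ is a closed cone and $u$ is $C^1$, the boundary rays of each component belong to this zero set, which is exactly what the local argument of Lemma \ref{L5.1} requires. Finally, I will confirm the converse: each listed configuration does solve \eqref{g1}. This holds because the pieces have disjoint supports and vanish to first order on their boundaries, so in the distributional sense $\Delta u=|x|\chi_{\{u>0\}}$ holds.
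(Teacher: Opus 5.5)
Your argument is the paper's: the computation of Lemma \ref{L5.1} is applied to each component separately, which forces every component to be a sector of opening exactly $\frac{2\pi}{3}$, so there are two or three components. Your remarks on the boundary rays, the finiteness of $N$, and the converse only spell out what the paper leaves implicit.

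The $\sigma=0$ point you left open is settled by relabeling. With two cones the gaps are $\sigma$ and $\frac{2\pi}{3}-\sigma$, and swapping which cone is called the first replaces $\sigma$ by $\frac{2\pi}{3}-\sigma$. So $\sigma=0$ is the same configuration as the admissible value $\sigma=\frac{2\pi}{3}$. It is not the three-cone case, since $u\equiv0$ on the leftover arc of length $\frac{2\pi}{3}$.
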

\begin{proof} From the proof of Lemma \ref{L5.1}, it can be observed that when $\Omega(u)\neq\mathbb{R}^2\verb+\+ \{0 \}$,
the only  permitted form of an open cone is exclusively $\{\theta: \theta_1<\theta<\theta_2, \theta_2=\theta_1+\frac{2\pi}{3}, 0\le \theta_1<2\pi  \}$.
As a result, the desired open cone is composed of two or three disjoint open cones of this form.
\end{proof}

Next, we consider the solution of \eqref{g1} with the open cone $\Omega(u)=\Omega_1(u)$.
\begin{lem}\label{L5.3} Let $u$ be any solution to \eqref{g1} with an open cone $\Omega_1(u)=\mathbb{R}^2\verb+\+ \{0 \}$. Then we have
\begin{equation}\label{g7}
u=r^3(A\cos3\theta+B\sin3\theta +\frac{1}{9}) \quad\textrm{for all}\quad 0\le\theta<2\pi,
\end{equation}
where $A, B\in \mathbb{R}$ with $\sqrt{A^2+B^2}<\frac{1}{9}$.
\end{lem}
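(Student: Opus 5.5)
The plan is to follow the argument of Lemma \ref{L5.1}, now on the whole punctured plane. Since $\Omega(u)=\mathbb{R}^2\setminus\{0\}$, the equation reads $\Delta u=|x|$ in $\mathbb{R}^2\setminus\{0\}$.

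\emph{Step 1: reduce to a harmonic function.} Set $V(x):=u(x)-\frac{1}{9}|x|^3$. A direct computation gives $\Delta(|x|^3)=9|x|$, so $\Delta V=0$ in $\mathbb{R}^2\setminus\{0\}$. Moreover $V$ is homogeneous of degree $3$ and lies in $C^{1,1}_{loc}$.

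\emph{Step 2: identify the angular profile.} Write $V=r^3y(\theta)$. Harmonicity gives $y''+9y=0$, and $y$ must be $2\pi$-periodic. Hence $y=A\cos3\theta+B\sin3\theta$, and so
\[
u=r^3\Big(A\cos3\theta+B\sin3\theta+\tfrac19\Big),
\]
which is \eqref{g7}. Since $V$ is homogeneous of degree $3$ and bounded near $0$, the origin is a removable singularity, so nothing else needs to be checked there.

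\emph{Step 3: pin down the constraint on $A,B$.} Write $\sqrt{A^2+B^2}=\rho$. The minimum of $A\cos3\theta+B\sin3\theta$ over $\theta$ is $-\rho$. The condition $u\ge0$ therefore forces $\rho\le\frac19$.

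The main point is to exclude the equality case $\rho=\frac19$. In that case $u(e^{i\theta})=\frac19(1-\cos3(\theta-\theta_1))$ for some $\theta_1$, which vanishes together with its gradient along the three rays $\theta=\theta_1+\frac{2k\pi}{3}$. Two things then go wrong at once.
\begin{itemize}
\item On those rays we have $u=|\nabla u|=0$, so $\Omega(u)\neq\mathbb{R}^2\setminus\{0\}$, contradicting the hypothesis $\Omega(u)=\Omega_1(u)$.
\item The function is exactly $h_{\underline{\theta_1}}$ from \eqref{ff03}, that is, the three-cone solution already covered by Lemma \ref{L5.2}(ii).
\end{itemize}
Hence $\rho<\frac19$.

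\emph{Step 4: converse.} If $\rho<\frac19$, then $u>0$ away from the origin, so $\{u>0\}=\Omega(u)=\mathbb{R}^2\setminus\{0\}$. By Step 1, $\Delta u=|x|\chi_{\{u>0\}}$ holds in $\mathbb{R}^2$ in the distributional sense. The point $\{0\}$ carries no mass, because $u\in C^{1,1}_{loc}$ since $|x|^3$ is $C^{2,1}$.

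The only delicate step is the strict inequality in Step 3, and it follows directly from comparing the zero set of $u$ with the definition of $\Omega(u)$.
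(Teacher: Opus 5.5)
Your proof is correct and follows essentially the paper's route. You subtract $\frac{1}{9}|x|^3$ to obtain a degree-$3$ homogeneous harmonic function $r^3(A\cos3\theta+B\sin3\theta)$, then use $u\ge0$ together with the fact that $u=|\nabla u|=0$ cannot occur away from the origin to force $\sqrt{A^2+B^2}<\frac19$. The paper encodes that last condition as $\varpi_u(\theta)=(A-Bi)e^{3i\theta}+\frac19\neq0$ and splits into the cases \eqref{g9}/\eqref{g10}, whereas you argue directly that when $\sqrt{A^2+B^2}=\frac19$ the zeros of $u$ are interior minima, so the gradient vanishes there. That is cleaner, but it is the same idea.
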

\begin{proof}
As similar to the proof of Lemma \ref{L5.1}, the solutions $u$  of \eqref{g2} solve \eqref{g1} in $\mathbb{R}^2\verb+\+ \{0 \}$ if and only if

\begin{equation}\label{g8}
\left\{
\begin{split}
&u(e^{i\theta})=A\cos3\theta+B\sin3\theta +\frac{1}{9}\ge0 \quad\textrm{for all}\quad\theta \in \mathbb{R},\\
&\varpi_{u}(\theta)=(A-Bi)e^{i3\theta}+\frac{1}{9}\neq\mathbf{0} \quad\textrm{for all}\quad \theta \in \mathbb{R},
\end{split}
\right.
\end{equation}
where $A,B$ are arbitrary constants. This is equivalent to the statement that any admissible parameter pair $(A, B)$ of \eqref{g8} satisfies either
\begin{equation}\label{g9}
\left\{
\begin{split}
&u(e^{i\theta})=A\cos3\theta+B\sin3\theta +\frac{1}{9}\ge0 \quad\textrm{for all}\quad\theta \in \mathbb{R},\\
&A\cos3\theta+B\sin3\theta +\frac{1}{9}\neq0 \quad\textrm{for all}\quad \theta \in \mathbb{R}.
\end{split}
\right.
\end{equation}
or
\begin{equation}\label{g10}
\left\{
\begin{split}
&u(e^{i\theta})=A\cos3\theta+B\sin3\theta +\frac{1}{9}\ge0 \quad\textrm{for all}\quad\theta \in \mathbb{R},\\
&A\sin3\theta-B\cos3\theta \neq0 \quad\textrm{for all}\quad \theta \in \mathbb{R}.
\end{split}
\right.
\end{equation}
It is easy to verify that the admissible parameter pair $(A,B)$ of \eqref{g9} meets $\sqrt{A^2+B^2}< \frac{1}{9}$.
And the admissible parameter pair $(A,B)$ of \eqref{g10} is empty. Hence all admissible parameter pairs $(A,B)$ of \eqref{g8}
satisfy $\sqrt{A^2+B^2}< \frac{1}{9}$. Note that $u(e^{i\theta})$ is a $2\pi$-periodic function, hence one can restrict $\theta\in[0,2\pi)$. Finally, the  homogeneity gives the desired conclusion.
\end{proof}

Thanks to Lemma \ref{L4.3}, one can give the Weiss energy for each solution.
\begin{lem}\label{L5.4} Let $u$ be any solution to \eqref{g1}.

$(\romannumeral 1)$ If $u$ meets \eqref{solu2}, then we have $\Phi(1,u)=\frac{\pi}{81}$.

$(\romannumeral 2)$ If $u$ meets \eqref{L11001}, then we have $\Phi(1,u)=\frac{2\pi}{81}$.

$(\romannumeral 3)$ If $u$ meets \eqref{L11002} or \eqref{g7}, then we have $\Phi(1,u)=\frac{\pi}{27}$.
\end{lem}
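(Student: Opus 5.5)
The plan is to reduce everything to a one-dimensional integral over the circle using the identity \eqref{m14} of Lemma \ref{L4.3}. Each solution $u$ of \eqref{g1} is nonnegative, homogeneous of degree $3$ and in $C^{1,1}_{loc}$, so \eqref{m14} applies. For the cone solutions we should check that the integration by parts behind \eqref{m14} is legitimate. This holds because $u=|\nabla u|=0$ on the boundary rays of each cone, so $u\in W^{2,\infty}_{loc}$ and $\Delta u=|x|\chi_{\{u>0\}}$ a.e. Since $u^+=u$, we get $\Phi(1,u)=\int_{B_1}|x|u\,dx$. Writing $u(re^{i\theta})=r^3u(e^{i\theta})$ and $dx=r\,dr\,d\theta$, this becomes
\begin{equation*}
\Phi(1,u)=\int_0^1 r^5\,dr\int_0^{2\pi}u(e^{i\theta})\,d\theta=\frac{1}{6}\int_0^{2\pi}u(e^{i\theta})\,d\theta .
\end{equation*}
So it only remains to compute the angular integral in each case of the classification (Corollary \ref{Coro5.1}, Lemma \ref{L5.2}, Lemma \ref{L5.3}).

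\textbf{Case (i).} For \eqref{solu2}, $u(e^{i\theta})=\frac19(1-\cos3(\theta-\theta_1))$ on an arc of length $\frac{2\pi}{3}$ and vanishes elsewhere. The substitution $\phi=\theta-\theta_1$ turns the cosine term into $\int_0^{2\pi/3}\cos3\phi\,d\phi=0$, since this is a full period of $\cos 3\phi$. Hence the angular integral equals $\frac19\cdot\frac{2\pi}{3}=\frac{2\pi}{27}$, and $\Phi(1,u)=\frac{2\pi}{162}=\frac{\pi}{81}$.

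\textbf{Case (ii).} For \eqref{L11001}, the two pieces $u_1,u_2$ live on disjoint arcs, each of length $\frac{2\pi}{3}$. Each piece is again of the form $\frac19(1-\cos3(\theta-\theta_*))$ over a full period, so each contributes $\frac{2\pi}{27}$. Therefore $\Phi(1,u)=\frac{2\pi}{81}$.

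\textbf{Case (iii).} For \eqref{L11002}, the same computation gives three equal contributions, so $\Phi(1,u)=\frac{3\pi}{81}=\frac{\pi}{27}$. For \eqref{g7}, $u(e^{i\theta})=A\cos3\theta+B\sin3\theta+\frac19$ on the whole circle, and the trigonometric terms integrate to zero over $[0,2\pi)$. This gives $\frac{2\pi}{9}\cdot\frac16=\frac{\pi}{27}$, the same value as the three-cone solutions, independently of $(A,B)$.

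The only potentially delicate point is the justification of \eqref{m14} for the cone solutions, namely that no boundary terms arise on the rays $\partial\Omega(u)$. This follows because $u$ and $\nabla u$ vanish there, so $u$ is a genuine $C^{1,1}$ weak solution on all of $B_1$. Everything else is elementary integration.
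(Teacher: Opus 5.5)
Your proposal is correct and follows essentially the same route as the paper. Both apply \eqref{m14}, pass to polar coordinates to get $\Phi(1,u)=\frac16\int_0^{2\pi}u(e^{i\theta})\,d\theta$, and evaluate the angular integral over each full-period arc, or over the whole circle for \eqref{g7}. Your extra check that the integration by parts behind \eqref{m14} is valid for the glued cone solutions, which are $C^{1,1}$ with $u=|\nabla u|=0$ on the boundary rays, is a detail the paper leaves implicit.
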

\begin{proof}
With the help of  \eqref{m14}, we have
\begin{equation*}
\Phi(1,u)=\int_{0}^{1}r^2dr\int_{0}^{2\pi}u\chi_{\{u> 0\}}d\theta.
\end{equation*}
Hence, If $u$ meets \eqref{solu2}, then we have
\begin{equation}\label{g11}
\Phi(1,u)=\frac{1}{9}\int_{0}^{1}r^5dr\int_{\theta_1}^{\theta_1+\frac{2\pi}{3}}(1-\cos3(\theta-\theta_1))d\theta=\frac{\pi}{81}.
\end{equation}
Similarly, if $u$ meets \eqref{L11001}, then we have $\Phi(1,u)=\frac{2\pi}{81}$.  If $u$ meets \eqref{L11002} or \eqref{g7}, then we have $\Phi(1,u)=\frac{\pi}{27}$.
\end{proof}

\begin{proof}[Proof of Theorem \ref{TH1.2}.]
A combination of Corollary \ref{Coro5.1}, Lemma \ref{L5.2} and Lemma \ref{L5.3} yields the desired conclusion.
\end{proof}

Now, we are in the position to show the uniqueness of the blowup limits at regular free boundary point, which is the very important to establish the regularity of the free boundary.\\

\begin{proof}[Proof of Theorem \ref{TH1.2un}.]
Define the energy error as
\begin{equation*}
e(r):=\Phi(r)-\Phi(0+).
\end{equation*}
In order to establish the decay rate of the energy, it is sufficient to construct the ODE of the form  $e'(r)\ge \frac{c}{r}e(r)$. To proceed further, we differentiate $e(r)$ to obtain
\begin{equation}\label{u1}
e'(r)=-6r^{-7}A(r)+r^{-6}A'(r)+21r^{-8}B(r)-3r^{-7}B'(r),
\end{equation}
where
\begin{equation*}
A(r):=\int_{B_r}\left( |\nabla u|^2+2|x|u^+ \right)dx,
\end{equation*}
and
\begin{equation*}
B(r):=\int_{\partial B_r}u^2d\mathcal{H}.
\end{equation*}
For simplicity, we define $F(x):=|\nabla u|^2 +2|x|u^+  $. Direct computations show
that
\begin{equation}\label{u2}
\begin{split}
A(r)&=r^2\int_{B_1}\left(|\nabla u(ry)|^2+2|ry|u^+(ry)  \right)dy\\
&=r^2 \int_{B_1}F(ry)dy.
\end{split}
\end{equation}
Then, differentiating the identity \eqref{u2} and applying integration by parts give
\begin{equation}\label{u3}
\begin{split}
A'(r)&=2r\int_{B_1}F(ry)dy+r\left(\int_{\partial B_1}F(ry)d\mathcal{H}-2\int_{B_1}F(ry)dy\right)\\
&=r\int_{\partial B_1}F(ry)d\mathcal{H}.
\end{split}
\end{equation}
Similarly, one can deduce that
\begin{equation}\label{u4}
B(r)=r\int_{\partial B_1}u^2(ry)d\mathcal{H},
\end{equation}
and therefore, we have
\begin{equation*}
B'(r)=\int_{\partial B_1}u^2(ry)d\mathcal{H}+2r\int_{\partial B_1}u(ry)\nabla u(ry)\cdot y d\mathcal{H}.
\end{equation*}
This together with \eqref{u1}-\eqref{u4} implies
\begin{equation}\label{u5}
\begin{split}
e'(r)&=-6r^{-5}\int_{B_1}F(ry)dy+r^{-5}\int_{\partial B_1}F(ry)d\mathcal{H}+18r^{-7}\int_{\partial B_1}u^2(ry)d\mathcal{H}\\
&\quad -6r^{-6}\int_{\partial B_1}u(ry)\nabla u(ry)\cdot y d\mathcal{H}.
\end{split}
\end{equation}
On the other hand, by the definition of $e(r)$, one can see that
\begin{equation*}
e(r)=r^{-4}\int_{B_1}F(ry)dy-3r^{-6}\int_{\partial B_1}u^2(ry)d\mathcal{H}-\Phi(0+).
\end{equation*}
As a result, we conclude that
\begin{equation}\label{u6}
\begin{split}
e'(r)=&-\frac{6}{r}e(r)-\frac{6}{r}\Phi(0+)-6r^{-6}\int_{\partial B_1}u(ry)\nabla u(ry)\cdot y d\mathcal{H}\\
&\quad+r^{-5}\int_{\partial B_1}F(ry)d\mathcal{H}.
\end{split}
\end{equation}
Furthermore, by using $u_r(y)=\frac{u(ry)}{r^3}$, we obtain that
\begin{equation}\label{u7}
\begin{split}
e'(r)=&-\frac{6}{r}e(r)-\frac{6}{r}\Phi(0+)+\frac{1}{r}\int_{\partial B_1}\left(| \nabla u_r(y)|^2+2|y|u^+_r(y) \right)d\mathcal{H}\\
&-\frac{6}{r}\int_{\partial B_1} u_r(y)\nabla u_r(y)\cdot yd\mathcal{H}.
\end{split}
\end{equation}
Next, we define ${\nabla}^\perp_\nu:=\nabla u_r-(\nabla u_r\cdot\nu)\nu$ to denote the projection of $\nabla u_r$  onto the tangent plane orthogonal to $\nu$, where $\nu$ is the unit outward normal of $\partial B_1$. Therefore, it follows from \eqref{u7} that
\begin{equation}\label{u8}
\begin{split}
e'(r)=&-\frac{6}{r}(e(r)+\Phi(0+) )+\frac{1}{r}\int_{\partial B_1}\left( |{\nabla}^\perp_\nu u_r|^2+|\nabla u_r\cdot\nu |^2+2|y|u^+_r  \right)d\mathcal{H}\\
&-\frac{6}{r}\int_{\partial B_1}u_r\nabla u_r\cdot yd\mathcal{H}\\
=& -\frac{6}{r}(e(r)+\Phi(0+) )+ \frac{1}{r}\int_{\partial B_1}|{\nabla}^\perp_\nu u_r|^2 d\mathcal{H}+\frac{1}{r}\int_{\partial B_1} (\nabla u_r\cdot\nu-3u_r )^2d\mathcal{H}\\
&-\frac{9}{r}\int_{\partial B_1}{u_r}^2d\mathcal{H} +\frac{2}{r}\int_{\partial B_1}|y|u^+_rd\mathcal{H}\\
\ge& -\frac{6}{r}(e(r)+\Phi(0+) )+ \frac{1}{r}\int_{\partial B_1}|{\nabla}^\perp_\nu u_r|^2d\mathcal{H}+\frac{2}{r}\int_{\partial B_1}|y|u^+_rd\mathcal{H}\\
&-\frac{9}{r}\int_{\partial B_1}{u_r}^2d\mathcal{H}.
\end{split}
\end{equation}
Note that $c_r(y)$ is a homogeneous function of degree $3$ and $u_r= c_r$ on $\partial B_1$, hence it follows from \eqref{u8} that
\begin{equation}\label{u9}
\begin{split}
e'(r)\ge &-\frac{6}{r}(e(r)+\Phi(0+) )+\frac{1}{r}\int_{\partial B_1}\left( |{\nabla}^\perp_\nu c_r|^2+2|y|c^+_r+(3c_r)^2 \right)d\mathcal{H}-\frac{18}{r}\int_{\partial B_1}{c_r}^2d\mathcal{H}\\
=& -\frac{6}{r}(e(r)+\Phi(0+) )+\frac{1}{r}\int_{\partial B_1}\left( |{\nabla}^\perp_\nu c_r|^2+2|y|c^+_r+|\nabla c_r\cdot \nu|^2 \right)d\mathcal{H}-\frac{18}{r}\int_{\partial B_1}{c_r}^2d\mathcal{H}\\
=& -\frac{6}{r}(e(r)+\Phi(0+) )+\frac{1}{r}\int_{\partial B_1}\left(|\nabla c_r|^2+2|y|c^+_r  \right)d\mathcal{H}-\frac{18}{r}\int_{\partial B_1}{c_r}^2d\mathcal{H}.
\end{split}
\end{equation}
Then, by the homogeneous of $c_r$ again, we arrive at
\begin{equation*}
\begin{split}
\int_{B_1}\left(|\nabla c_r|^2+2|y|c^+_r  \right)dy=&\int_{0}^{1}d\rho\int_{\partial B_\rho} \left(|\nabla c_r|^2+2|y|c^+_r  \right)d\mathcal{H}\\
=&\int_{0}^{1}{\rho}^5d\rho \int_{\partial B_1}\left(|\nabla c_r|^2+2|z|c^+_r  \right)d\mathcal{H}\\
=&\frac{1}{6}\int_{\partial B_1}\left(|\nabla c_r|^2+2|z|c^+_r  \right)d\mathcal{H}.
\end{split}
\end{equation*}
Inserting this in \eqref{u9}, we infer that
\begin{equation}\label{u10}
\begin{split}
e'(r)\ge &-\frac{6}{r}(e(r)+\Phi(0+) )+\frac{6}{r}\int_{B_1}\left(|\nabla c_r|^2+2|y|c^+_r  \right)dy-\frac{18}{r}\int_{\partial B_1}{c_r}^2d\mathcal{H}\\
=&\frac{6}{r}( M(c_r)-e(r)-\Phi(0) ).
\end{split}
\end{equation}
On one hand, the epiperimetric inequality shows that
\begin{equation*}
M(v)\le (1-\kappa)M(c_r)+\kappa\Phi(0)
\end{equation*}
for some $v\in W^{1,2}(B_1)$ with $v=c_r$ on $\partial B_1$. On the other hand, $u$ is the minimizer of the energy $M(v)$. Hence we obtain
\begin{equation*}
M(u_r)\le M(v)\le (1-\kappa)M(c_r)+\kappa\Phi(0),
\end{equation*}
which together with \eqref{u10} and the definition of $M(u_r)$ yields that
\begin{equation*}
\begin{split}
e'(r)\ge& \frac{6}{r}\left[ \frac{1}{1-\kappa}(M(u_r)- \Phi(0) )-e(r)  \right]\\
=&\frac{6\kappa}{1-\kappa}\frac{e(r)}{r}.
\end{split}
\end{equation*}
This is equivalent to
\begin{equation*}
\left( r^{\frac{6\kappa}{\kappa-1}} e(r)   \right)'\ge0.
\end{equation*}
Then integrating this expression from $r$ to $r_0$ gives
\begin{equation}\label{u11}
e(r)\le \left( \frac{r}{r_0} \right)^{ \frac{6\kappa}{\kappa-1} }e(r_0) \quad\textrm{for all}\quad 0<r<r_0.
\end{equation}
This complete the proof of the first statement.\\

In what follows, we will show that the decay rate of the energy implies the uniqueness of the blowup limits.

For any $0<t<\sigma<r_0<1$, we have
\begin{equation}\label{u12}
\begin{split}
\int_{\partial B_1}\left|  \frac{u(\sigma x)}{{\sigma}^3}- \frac{u(t x)}{t^3} \right|d\mathcal{H}&\le \int_{\partial B_1}  \int_{t}^{\sigma} \left| \frac{d}{ds}u_s  \right|dsd\mathcal{H}\\
&= \int_{t}^{\sigma} \frac{1}{s^3} \int_{\partial B_1} \left| \nabla u(sx)\cdot x- \frac{3u(sx)}{s}\right|dsd\mathcal{H}.
\end{split}
\end{equation}
Using H\"{o}lder's inequality once more, one can see that
\begin{equation}\label{u13}
\begin{split}
\int_{t}^{\sigma} \frac{1}{s^3} \int_{\partial B_1} \left| \nabla u(sx)\cdot x- \frac{3u(sx)}{s}\right|dsd\mathcal{H} &\le C\int_{t}^{\sigma}s^{-\frac{1}{2}}\left( \frac{2}{s^5}\int_{\partial B_1} \left|\nabla u(sx)\cdot x-\frac{3u(sx)}{s}  \right|^2 d\mathcal{H}  \right)^{\frac{1}{2}}ds\\
&=C\int_{t}^{\sigma}s^{-\frac{1}{2}}(\Phi'(s))^{ \frac{1}{2} }ds\\
&=C\int_{t}^{\sigma}s^{-\frac{1}{2}}(e'(s))^{ \frac{1}{2} } ds\\
&\le C\left( \int_{t}^{\sigma}s^{-1}ds   \right)^{\frac{1}{2}}\left( \int_{t}^{\sigma} e'(s)  ds\right)^{\frac{1}{2}}\\
&=C\left( \log\sigma- \log t  \right)^{\frac{1}{2}}\left( e( \sigma)-e(t)  \right)^{\frac{1}{2}}.
\end{split}
\end{equation}
Thus, collecting \eqref{u12} and \eqref{u13}, one can find that for any $0<t<\sigma<r_0<1$ we have
\begin{equation}\label{u14}
\int_{t}^{\sigma} \int_{\partial B_1}\left|   \frac{d}{ds}u_s  \right|d\mathcal{H}ds\le C\left( \log\sigma- \log t  \right)^{\frac{1}{2}}\left( e( \sigma)-e(t)  \right)^{\frac{1}{2}}.
\end{equation}

Now, considering $\rho$ and  $r$ satisfying $0<2\rho<2r< r_0<1 $, it is easy to check that there are integers $K,J>0$ with $K<J$ such that
\begin{equation}\label{u15}
2^{-(K+1)}\le \rho<2^{-K},\quad
2^{-(J+1)}\le r<2^{-J}.
\end{equation}
And therefore, we have
\begin{equation*}
\begin{split}
\int_{\partial B_1}\left|  \frac{u(r x)}{{r}^3}- \frac{u(\rho x)}{{\rho}^3} \right|d\mathcal{H}&\le \int_{\rho}^{r} \int_{\partial B_1} \left|   \frac{d}{ds}u_s  \right|d\mathcal{H}ds\\
&\le \sum\limits_{i=J}^{K}  \int_{\frac{1}{2^{i+1}}}^{\frac{1}{2^i}} \int_{\partial B_1} \left|   \frac{d}{ds}u_s  \right|d\mathcal{H}ds,
\end{split}
\end{equation*}
which together  with \eqref{u14} yields that
\begin{equation}\label{u16}
\begin{split}
\int_{\partial B_1}\left|  \frac{u(r x)}{{r}^3}- \frac{u(\rho x)}{{\rho}^3} \right|d\mathcal{H}&\le C \sum\limits_{i=J}^{K}\left( \log{\frac{1}{2^i}}- \log{\frac{1}{2^{i+1}} }  \right)^{\frac{1}{2}}\left( e\Big( \frac{1}{2^i} \Big)-e\Big(\frac{1}{2^{i+1}}\Big)  \right)^{\frac{1}{2}}\\
&\le C \sum\limits_{i=J}^{K}\left(  e\Big( \frac{1}{2^i} \Big) \right)^{\frac{1}{2}}\\
&\le C \sum\limits_{i=J}^{K} \left(   e(r_0)\left( \frac{2^{-i}}{r_0} \right)^{ \frac{6\kappa}{1-\kappa} } \right)^{\frac{1}{2}}\\
&\le C(\kappa)\sum\limits_{i=J}^{\infty} q^i\\
&\le C(\kappa)\frac{q^J}{1-q}\\
&\le C(\kappa) r^{\frac{3\kappa}{1-\kappa}}
\end{split}
\end{equation}
due to the fact $\frac{1}{2^{J+1}}\le r$, where $q=\left( \frac{1}{2}  \right)^{\frac{3\kappa}{1-\kappa}}\in (0,1)$. To finish the proof, letting $u_{\rho_m}\to u_0$ as a certain sequence $\rho_m\to0$, one can arrive at
\begin{equation*}
\int_{\partial B_1}|u_r-u_0|d\mathcal{H}\le C(\kappa)r^{\frac{3\kappa}{1-\kappa}}.
\end{equation*}
This completes the proof of Theorem \ref{TH1.2un}.
\end{proof}

\section{Convergence of the free boundary} \label{sect Convergence of the free boundary}

In the following chapters, we want to ``transfer" some key information from  blowup limits to the original solutions.
To this end, the first step is to give the convergence of the free boundary to the free boundary of the blowup profiles. Before doing this, we present some important observations.\\

Any global solution with the energy $\frac{\pi}{81}$, up to a rotation,  is of the form
\begin{equation}\label{we1}
u^*=\frac{r^3}{9}(1-\sin3\theta)  \quad\textrm{for all}\quad \frac{\pi}{6}<\theta<\frac{5\pi}{6}.
\end{equation}
Meanwhile, all global solutions with the energy $\frac{\pi}{81}$  have a lower bound.
\begin{lem}\label{L6.1}
Let $u$ be any global solution of \eqref{g1} with $\phi(1,u)=\frac{\pi}{81}$. Then we have
\begin{equation}\label{con1}
u(x)\ge C  d^2(x,\{ u=0\})( |x|+ d(x,\{ u=0\}))  \quad\textrm{for all}\quad x\in \mathbb{R}^2,
\end{equation}
where the constant $C>0$ is independent of $u$.
\end{lem}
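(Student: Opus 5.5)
Since every global solution with $\Phi(1,u)=\frac{\pi}{81}$ is, by Lemma \ref{L5.4} and the classification in Theorem \ref{TH1.2}, a rotation of $u^*$ in \eqref{we1}, and both sides of \eqref{con1} are rotation invariant, I will prove the inequality only for $u^*$. The constant is then automatically independent of $u$. Both sides are homogeneous of degree $3$: $d(\lambda x,\{u^*=0\})=\lambda d(x,\{u^*=0\})$ because the zero set is a cone. It therefore suffices to prove \eqref{con1} for $|x|=1$, i.e. to show
\begin{equation*}
\frac19\bigl(1-\sin3\theta\bigr)\ge C\,d^2(1+d),\qquad d=d(e^{i\theta},\{u^*=0\}),\ \tfrac{\pi}{6}<\theta<\tfrac{5\pi}{6}.
\end{equation*}
For $x\in\{u^*=0\}$ the inequality is trivial. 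Since $d\le |x|=1$, it is enough to prove $1-\sin3\theta\ge c\,d^2$.

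Next I compute $d$ explicitly. The zero set is the closed cone $\{\theta\in[\frac{5\pi}{6},\frac{13\pi}{6}]\}$, whose boundary consists of the two rays $\theta=\frac{\pi}{6}$ and $\theta=\frac{5\pi}{6}$. For $\theta\in(\frac{\pi}{6},\frac{5\pi}{6})$ the opening of the positive cone is $\frac{2\pi}{3}<\pi$. Hence the nearest point of the zero set lies on one of the boundary rays, at the orthogonal projection, and
\begin{equation*}
d=\sin\phi,\qquad \phi:=\min\bigl(\theta-\tfrac{\pi}{6},\ \tfrac{5\pi}{6}-\theta\bigr)\in\bigl(0,\tfrac{\pi}{3}\bigr].
\end{equation*}
By symmetry about $\theta=\frac{\pi}{2}$, I may take $\theta=\frac{\pi}{6}+\phi$. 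Then $1-\sin3\theta=1-\cos3\phi=2\sin^2\frac{3\phi}{2}$.

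It remains to show $2\sin^2\frac{3\phi}{2}\ge c\sin^2\phi$ for $\phi\in(0,\frac{\pi}{3}]$. Here $\frac{3\phi}{2}\in(0,\frac{\pi}{2}]$, so Jordan's inequality gives $\sin\frac{3\phi}{2}\ge\frac{2}{\pi}\cdot\frac{3\phi}{2}=\frac{3\phi}{\pi}$. Combined with $\sin\phi\le\phi$, this yields $2\sin^2\frac{3\phi}{2}\ge\frac{18}{\pi^2}\sin^2\phi$.

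Putting these together gives
\begin{equation*}
u^*(x)\ge\frac{1}{9}\cdot\frac{18}{\pi^2}\,d^2|x|\ge\frac{1}{\pi^2}\,d^2\,\frac{|x|+d}{2},
\end{equation*}
using $d\le|x|$. So \eqref{con1} holds with $C=\frac{1}{2\pi^2}$. The only delicate point is the distance computation: the positive cone must have opening less than $\pi$ so that the nearest point lies on a boundary ray rather than at the origin, and this holds since the opening is $\frac{2\pi}{3}$.
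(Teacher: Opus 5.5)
Your proof is correct and follows the same route as the paper: reduce by homogeneity to the unit circle, identify $d$ as the sine of the angle to the nearer boundary ray, and bound $(1-\cos 3\widetilde{\theta})/\bigl(d^2(1+d)\bigr)$ from below. The only difference is the last step. The paper gets a positive infimum from continuity together with the endpoint limits computed by L'Hospital, whereas your Jordan-inequality estimate yields the explicit constant $C=\frac{1}{2\pi^2}$; your appeal to Lemma \ref{L5.4} also makes explicit the identification of $u$ with a rotation of $u^*$, which the paper uses without stating it.
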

\begin{proof}
The statement is evident for $x\in\{u=0\}$. Let $x\in\{u>0\}$. Observe that both $u$ and $d(x,\{ u=0\})$ are homogeneous function, hence the statement \eqref{con1} is equivalent to
\begin{equation}\label{con2}
u(e^{i\theta})\ge C  d^2(e^{i\theta},\{ u=0\})( 1+ d(e^{i\theta},\{ u=0\}))  \quad\textrm{for all}\quad \theta_1<\theta<\theta_1+\frac{2\pi}{3}.
\end{equation}
For simplicity, we denote $d(e^{i\theta},\{ u=0\})$ as $d$. Define the free boundary $\Gamma_u:=\Gamma^1_u\cup \Gamma^2_u$, where the angles corresponding to $\Gamma^1_u$ and $\Gamma^2_u$ are $\theta_1$ and  $\theta_2$, respectively.
 Define $d_1:=dist(e^{i\theta}, \Gamma^1_u)$ and $d_2:=dist(e^{i\theta}, \Gamma^2_u)$. Then we have $d=\min\{ d_1,d_2 \}$.
It is easy to check that $d_1=|\sin\widetilde{\theta}|$ and $d_2=|\sin(\widetilde{\theta}-\frac{2\pi}{3})|$, where $\widetilde{\theta}=\theta-\theta_1\in (0,\frac{2\pi}{3})$.
Define
\begin{equation}\label{con3}
\varrho( \widetilde{\theta}):=\frac{1-\cos3\widetilde{\theta}}{9d^2(d+1)}.
\end{equation}
One can see that $0<\varrho( \widetilde{\theta})\in C(0,\frac{2\pi}{3})$.
On one hand, by L'Hospital's rule, it is easy to find $\lim_{\widetilde{\theta} \to 0+}\varrho( \widetilde{\theta})=\lim_{\widetilde{\theta} \to  \frac{2\pi}{3}-}\varrho( \widetilde{\theta}) =\frac{1}{2} $.
On the other hand, one can easy check $\varrho( \frac{\pi}{3})=\frac{8\sqrt{3}}{81}<\varrho(0)=\varrho(\frac{2\pi}{3})$. Therefore,  $\varrho( \widetilde{\theta})$
 must attain its minimum  within the interval $(0,\frac{2\pi}{3})$. Taking $C=\min_{0< \widetilde{\theta} <\frac{2\pi}{3}}\varrho( \widetilde{\theta})$, we obtain desired result.  Here $C$ is independent with $\theta_1$.
\end{proof}

In the following lemma, one can see that when $||u-u^* ||_{L^\infty (B_1)}$ is small, if $x$ is far from $\{u^*=0\}$, then $\{u(x)>0\}$; if $x$ is far from $\{u^*>0\}$, then $\{u(x)=0\}$.
\begin{lem}\label{L6.2}
Let $u$ satisfy \eqref{f6} in $B_1$. If there exists a small constant $\varepsilon>0$ such that
\begin{equation}\label{con4}
||u-u^* ||_{L^\infty (B_1)}\le \varepsilon,
\end{equation}
then we have
\begin{equation}\label{con5}
u>0 \quad in \, B_1\cap \left\{x: d^2(x,\{ u^*=0\})( |x|+ d(x,\{ u^*=0\}))> \varepsilon^{-1} ||u-u^* ||_{L^\infty (B_1)} \right\},
\end{equation}
\begin{equation}\label{con6}
B_{1/2}\cap \left\{x: d^2(x,\{ u^*>0\})( |x|+ d(x,\{ u^*>0\}))> \varepsilon^{-1} ||u-u^* ||_{L^\infty (B_1)} \right\}\subset (\{u=0\}\cap B_1)^\circ.
\end{equation}
\end{lem}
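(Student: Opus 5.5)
The plan is to treat the two inclusions separately. For \eqref{con5} we compare $u$ with the explicit lower bound of Lemma \ref{L6.1}. For \eqref{con6} we use the nondegeneracy estimate \eqref{T1} of Lemma \ref{dege}. Throughout, $\varepsilon$ is chosen no larger than the constant of Lemma \ref{L6.1} and no larger than the constant $C$ in \eqref{T1}. Write $\eta:=\|u-u^*\|_{L^\infty(B_1)}$ for brevity.

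\emph{Proof of \eqref{con5}.} Take $x\in B_1$ with $d^2(x,\{u^*=0\})(|x|+d(x,\{u^*=0\}))>\varepsilon^{-1}\eta$. Since $u^*$ is a global solution with energy $\frac{\pi}{81}$, Lemma \ref{L6.1} applies and gives
\[
u^*(x)\ge C\,d^2(|x|+d)>C\varepsilon^{-1}\eta\ge \eta .
\]
The last inequality uses $\varepsilon\le C$. Consequently $u(x)\ge u^*(x)-\eta>0$. This part is immediate.

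\emph{Proof of \eqref{con6}.} Take $x\in B_{1/2}$ with $D:=d(x,\{u^*>0\})$ and $D^2(|x|+D)>\varepsilon^{-1}\eta$. The left-hand side is continuous in $x$, so this condition defines an open set. It therefore suffices to show that $u=0$ at every such $x$: then $u$ vanishes on an open neighbourhood of $x$, and $x\in(\{u=0\}\cap B_1)^\circ$.

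First observe that $0\in\partial\{u^*>0\}$, so $D\le |x|<\tfrac12$. Hence for every $r<D$ the ball $B_r(x)$ satisfies $B_r(x)\subset\subset B_1$ and $B_r(x)\subset\{u^*=0\}$. On this ball $u\le u^*+\eta=\eta$.

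Now suppose, for contradiction, that $u(x)>0$. Apply \eqref{T1} at $x_0=x$ with radius $r<D$:
\[
\eta\ge \sup_{\partial B_r(x)}u\ge u(x)+C r^2(r+|x|)> C r^2(r+|x|).
\]
Letting $r\uparrow D$ gives $C D^2(D+|x|)\le\eta$. On the other hand, the hypothesis and $\varepsilon\le C$ give $C D^2(D+|x|)>C\varepsilon^{-1}\eta\ge\eta$, which is a contradiction. Hence $u(x)=0$.

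The only delicate point is the geometric one: ensuring that the nondegeneracy ball stays inside $B_1$ and inside $\{u^*=0\}$. This is exactly where the restriction to $B_{1/2}$ enters, through the bound $D\le|x|$, which holds because the origin lies on the free boundary of $u^*$. We also need the strict inequality $r<D$ before passing to the limit. Since both constants are independent of $u$, choosing $\varepsilon$ small completes the argument.
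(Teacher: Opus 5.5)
Your proof is correct and follows the same route as the paper: \eqref{con5} comes from the lower bound of Lemma \ref{L6.1}, and \eqref{con6} comes by contradiction from the nondegeneracy estimate \eqref{T1} on balls contained in $\{u^*=0\}$. Your bookkeeping is slightly cleaner than the paper's. Observing $D\le|x|$ (because $0\in\Gamma_{u^*}$) and letting $r\uparrow D$ replaces the paper's choice $r=\frac12 d(x,(\{u^*=0\}\cap B_1)^c)$ and its $\min\{\cdot,\frac12\}$ step, so the hypothesis \eqref{con4} is never actually used. In addition, the openness of the superlevel set replaces the paper's separate argument that $\mathcal{S}$ lies in the interior of $\{u=0\}$.
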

\begin{proof}
With the help of \eqref{con1}, one can find
\begin{equation}\label{con7}
u^*(x)\ge Cd^2(x,\{ u^*=0\})( |x|+ d(x,\{ u^*=0\}))  \quad\textrm{for all}\quad x\in \mathbb{R}^2,
\end{equation}
For $\varepsilon$ suitably small, by the assumption in \eqref{con5}, we have
\begin{equation*}
\begin{split}
u&=u^*+u-u^*\ge u^*-||u-u^*||_{L^\infty (B_1)}\\
&\ge Cd^2(x,\{ u^*=0\})( |x|+ d(x,\{ u^*=0\}))-||u-u^*||_{L^\infty (B_1)}\\
&> \left( \frac{C}{\varepsilon} -1 \right)||u-u^*||_{L^\infty (B_1)}.
\end{split}
\end{equation*}
This completes the proof of the first part.

Now, we claim that  if $y\in B_1$, $B_r(y)\subset\subset \{u^*=0 \} \cap B_1 $ and
\begin{equation}\label{con8}
c_1r^2(r+|y|)> ||u-u^*||_{L^\infty (B_1)} ,
\end{equation}

then we have $u(y)=0$, where the positive constant $c_1$ is from Lemma \ref{dege}.

Assume by contradiction that $u(y_0)>0$, then it follows from  Lemma \ref{dege} that
\begin{equation*}
\sup_{\{u>0 \}\cap\partial B_r(y_0)} u\ge u(y_0)+c_1r^2\left(r+|y_0|  \right).
\end{equation*}
Since $B_r(y_0)\subset\subset \{u^*=0 \}$, we obtain that
\begin{equation*}
\begin{split}
0&=\sup_{\{u>0 \}\cap\partial B_r(y_0)}u^*\ge \sup_{\{u>0 \}\cap\partial B_r(y_0)}u-||u-u^*||_{L^\infty (B_1)}\\
&\ge c_1r^2\left(r+|y_0|  \right)-||u-u^*||_{L^\infty (B_1)}\\
&>u(y_0)
\end{split}
\end{equation*}
due to the assumptions, and hence we reach a contradiction. For all $x_0\in \left( \{u^*=0 \} \cap B_1  \right)^\circ$, we  take  $r=\frac{1}{2}d(x_0, (\{u^*=0 \} \cap B_1)^c )$.
Then by the above claim, if $r$ satisfies \eqref{con8}, i.e.,
\begin{equation}\label{con9}
\begin{split}
\frac{c_1}{8}d^2(x_0, (\{u^*=0 \} \cap B_1)^c )(2|x_0|+d(x_0, (\{u^*=0 \} \cap B_1)^c  )>||u-u^* ||_{L^\infty (B_1)},
\end{split}
\end{equation}
then one can arrive at $u(x_0)=0$. On the other hand, it is easy to see that
\begin{equation*}
\begin{split}
\mathcal{S}:=\left\{x\in B_1:  \frac{c_1}{8} d^2(x,(\{u^*=0 \} \cap B_1)^c)( |x|+ d(x,(\{u^*=0 \} \cap B_1)^c))>||u-u^* ||_{L^\infty (B_1)} \right\}
\end{split}
\end{equation*}
suffices to deduce \eqref{con9}. Therefore, one can find $\mathcal{S}\subset\{u=0\}\cap B_1 $.
Actually,  one can further obtain
\begin{equation}\label{con10}
\mathcal{S}\subset (\{u=0\}\cap B_1)^\circ.
\end{equation}
Otherwise, if there exists $x_0\in \mathcal{S}\cap \Gamma$, then one can find a point $z\in B_{\varsigma}(x_0)$ with $\varsigma>0$ small enough such that $u(z)>0$.
On the other hand, by the continuity of the distance function $x \mapsto d(x,(\{u^*=0 \} \cap B_1)^c )$ and the smallness of $\varsigma$, one can find the point $z$ meets \eqref{con9}. Therefore, we have $u(z)=0$, a contradiction.

Now, for all $x\in B_{1/2}\cap \left\{x: d^2(x,\{ u^*>0\})( |x|+ d(x,\{ u^*>0\}))> \varepsilon^{-1} ||u-u^* ||_{L^\infty (B_1)} \right\}$, one can see that
\begin{equation}\label{con11}
\begin{split}
d(x,(\{u^*=0 \} \cap B_1)^c )&=\min\{d(x, \{u^*>0 \}), d(x,B^c_1 ) \}\\
& \ge \min\{d(x, \{u^*>0 \}), \frac{1}{2}\}.
\end{split}
\end{equation}
Furthermore,  with the aid of \eqref{con11}, by choosing $0<\varepsilon<<c_1$  we obtain
\begin{equation}\label{con12}
\begin{split}
\frac{c_1}{8}& d^2(x,(\{u^*=0 \} \cap B_1)^c)( |x|+ d(x,(\{u^*=0 \} \cap B_1)^c))\\
&\ge \min\left\{   \frac{c_1}{8} d^2(x, \{u^*>0 \})(|x|+d(x, \{u^*>0 \})), \frac{c_1}{32}\left(|x|+\frac{1}{2}\right)\right\}\\
&\ge \min\left\{ \varepsilon d^2(x, \{u^*>0 \})(|x|+d(x, \{u^*>0 \})), 2\varepsilon \right\}\\
&> ||u-u^*||_{L^\infty (B_1)}
\end{split}
\end{equation}
due to the assumption  \eqref{con4}.  Hence, by applying \eqref{con10}, we arrive that
\begin{equation*}
B_{1/2}\cap \left\{x: d^2(x,\{ u^*>0\})( |x|+ d(x,\{ u^*>0\}))> \varepsilon^{-1} ||u-u^* ||_{L^\infty (B_1)} \right\} \subset (\{u=0\}\cap B_1)^\circ
\end{equation*}
under the assumption \eqref{con4}, which leads to \eqref{con6}.
\end{proof}

Next, we show that if $u$ is very close to $u^*$, then  the free boundary of $u$ lies close to $\Gamma_{u^*}$.

\begin{lem}\label{L6.3}
Let $u$ be any solution of  \eqref{f6} in $B_1$. There exists a constant $\varepsilon>0$ such that if
\begin{equation*}
||u-u^* ||_{L^\infty (B_1)}\le \varepsilon,
\end{equation*}
then we have
\begin{equation}\label{con13}
\Gamma_u\cap B_{\frac{1}{2}}\subset \left\{ d^2(x,\Gamma_{u^*})(|x|+ d(x,\Gamma_{u^*}))\le \varepsilon^{-1}||u-u^* ||_{L^\infty (B_1)} \right\}.
\end{equation}
\end{lem}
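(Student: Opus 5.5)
The inclusion \eqref{con13} will follow from Lemma \ref{L6.2} by contraposition. We take $\varepsilon$ to be the small constant furnished by Lemma \ref{L6.2}, decreasing it if needed. Let $x_0\in\Gamma_u\cap B_{1/2}$ and set $\eta:=\varepsilon^{-1}\|u-u^*\|_{L^\infty(B_1)}$. Arguing by contradiction, suppose
\[
d^2(x_0,\Gamma_{u^*})\big(|x_0|+d(x_0,\Gamma_{u^*})\big)>\eta .
\]
The plan is to show that $x_0$ then lies either in the open set where $u>0$ from \eqref{con5}, or in the interior of the coincidence set from \eqref{con6}. Either conclusion contradicts $x_0\in\Gamma_u=\partial\{u=0\}$.

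We split according to where $x_0$ sits relative to $u^*$.

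\emph{Case $x_0\in\{u^*>0\}$.} Since $\{u^*=0\}$ is the closed complement of the open cone $\{u^*>0\}$ (see \eqref{we1}), the nearest point of $\{u^*=0\}$ to $x_0$ lies on $\partial\{u^*>0\}=\Gamma_{u^*}$. Hence $d(x_0,\{u^*=0\})=d(x_0,\Gamma_{u^*})$, so $x_0$ satisfies the hypothesis in \eqref{con5}. The defining inequality in \eqref{con5} is strict and the distance function is continuous, so a whole neighborhood of $x_0$ also satisfies it. Thus $u>0$ near $x_0$, and $x_0\notin\partial\{u=0\}$.

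\emph{Case $x_0\in(\{u^*=0\})^\circ$.} Symmetrically, $d(x_0,\{u^*>0\})=d(x_0,\Gamma_{u^*})$. Since $x_0\in B_{1/2}$, \eqref{con6} gives $x_0\in(\{u=0\}\cap B_1)^\circ$, again contradicting $x_0\in\Gamma_u$.

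\emph{Case $x_0\in\Gamma_{u^*}$.} Here $d(x_0,\Gamma_{u^*})=0$, so the assumed strict inequality would read $0>\eta\ge 0$, which is impossible. This case therefore never occurs.

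The cases exhaust all $x_0$, which proves \eqref{con13}.

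The main point to check carefully is the identification of $d(x,\{u^*=0\})$ and $d(x,\{u^*>0\})$ with $d(x,\Gamma_{u^*})$ on the appropriate side. This uses that $\{u^*>0\}$ is the open sector $\{\pi/6<\theta<5\pi/6\}$ with boundary the two rays forming $\Gamma_{u^*}$. We also need that the same $\varepsilon$ works in both parts of Lemma \ref{L6.2}; this is harmless, since that lemma holds for all sufficiently small $\varepsilon$.
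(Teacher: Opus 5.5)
Your proposal is correct and follows the paper's proof. The paper also argues by contradiction and reduces to \eqref{con5} or \eqref{con6} of Lemma \ref{L6.2}, using the identity $d(x_0,\Gamma_{u^*})=\max\{d(x_0,\{u^*=0\}),d(x_0,\{u^*>0\})\}$, which is exactly what your three-case split verifies. (The neighborhood argument in your first case is not needed: $u(x_0)>0$ already rules out $x_0\in\Gamma_u$.)
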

\begin{proof}
If $u=u^*$, the statement is obvious. In the case $u\neq u^*$, assume by contradiction that there exists $x_0\in \Gamma_u\cap B_{\frac{1}{2}}$ such that
\begin{equation}\label{con14}
d^2(x_0,\Gamma_{u^*})(|x_0|+ d(x_0,\Gamma_{u^*}))> \varepsilon^{-1}||u-u^* ||_{L^\infty (B_1)}.
\end{equation}
In view of $d(x_0,\Gamma_{u^*})=\max \left\{ d(x_0,{u^*=0}),d(x_0,{u^*>0}) \right\}$, which together with \eqref{con14} implies that either of the following holds\\[-12pt]
\begin{flalign*}
&\quad(\romannumeral 1)\quad d^2(x_0,\{u^*=0\})(|x_0|+ d(x_0,\{u^*=0\}))> \varepsilon^{-1}||u-u^* ||_{L^\infty (B_1)}, &\\
&\quad(\romannumeral 2)\quad d^2(x_0,\{u^*>0\})(|x_0|+ d(x_0,\{u^*>0\}))> \varepsilon^{-1}||u-u^* ||_{L^\infty (B_1)}. &
\end{flalign*}
For the case $(\romannumeral 1)$, by Lemma \ref{L6.2}, we have $u(x_0)>0$, a contradiction. In the case of $(\romannumeral 2)$, by Lemma \ref{L6.2} again, one can see that
$x_0\in \{u=0\}^\circ$, which contradicts $x_0\in \Gamma_u$.
\end{proof}

With Lemma \ref{L6.3} in hand, one can show that the free boundary of original solution $u$ converges to the free boundary of $u^*$.
\begin{lem}\label{L6.4}
Let $u$ be any solution of \eqref{f6} in $B_1$, and $0$ is a regular point. If $x_0\in \Gamma_u$ with $|x_0|$ small enough, then there exists a constant $C>0$ such that
\begin{equation}\label{con15}
d(x_0,\Gamma_{u^*})\le C|x_0|  \|u_{4|x_0|}-u^* \|^{\frac{1}{3}}_{C^0 (B_1)}.
\end{equation}
\end{lem}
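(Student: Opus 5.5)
The plan is to rescale at $x_0$ by the factor $r:=4|x_0|$ and apply Lemma \ref{L6.3} to the rescaled solution $u_r$. Here $u^*$ is the (unique, after rotation) blowup profile at $0$, given by Theorem \ref{TH1.2un}. The rescaling $u_r(x)=u(rx)/r^3$ solves
\begin{equation*}
\Delta u_r=|x|\chi_{\{u_r>0\}}
\end{equation*}
in $B_1$ (when $r$ is small, so that $B_r\subset B_1$). Indeed, $\Delta u_r(x)=r^{-1}\Delta u(rx)=r^{-1}|rx|\chi=|x|\chi$, so $u_r$ is again a solution of \eqref{f6} in $B_1$. This homogeneity of the forcing term is exactly what makes scale $3$ natural.

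Set $y_0:=x_0/r$. Then $|y_0|=\tfrac14$, so $y_0\in B_{1/2}$ and $y_0\in\Gamma_{u_r}$. By Theorem \ref{TH1.2un}, or more precisely by the $C^{1,1}$ compactness of Lemma \ref{L4.2} combined with uniqueness of the blowup, we have $\|u_r-u^*\|_{L^\infty(B_1)}\to0$ as $r\to0$. Hence for $|x_0|$ small enough, $\|u_r-u^*\|_{L^\infty(B_1)}\le\varepsilon$, with $\varepsilon$ the constant of Lemma \ref{L6.3}. That lemma then gives
\begin{equation*}
d^2(y_0,\Gamma_{u^*})\bigl(|y_0|+d(y_0,\Gamma_{u^*})\bigr)\le \varepsilon^{-1}\|u_r-u^*\|_{L^\infty(B_1)}.
\end{equation*}
I would deliberately not use the degenerate factor $|y_0|$. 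Dropping it and keeping only $d^3$ yields
\begin{equation*}
d(y_0,\Gamma_{u^*})\le \varepsilon^{-1/3}\|u_r-u^*\|^{1/3}_{L^\infty(B_1)},
\end{equation*}
which produces the exponent $\tfrac13$ of the statement. (Using $|y_0|=\tfrac14$ instead would give the better exponent $\tfrac12$ when the distance is small, but the stated bound only needs the cube.)

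To return to the original scale, use the homogeneity of $u^*$: the cone $\Gamma_{u^*}$ is invariant under dilations, so $d(x_0,\Gamma_{u^*})=r\,d(y_0,\Gamma_{u^*})$. This gives
\begin{equation*}
d(x_0,\Gamma_{u^*})\le 4\varepsilon^{-1/3}|x_0|\,\|u_{4|x_0|}-u^*\|^{1/3}_{C^0(B_1)},
\end{equation*}
which is \eqref{con15} with $C=4\varepsilon^{-1/3}$.

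The main obstacle is justifying that the hypothesis of Lemma \ref{L6.3} holds in $L^\infty$, not just in the $L^1(\partial B_1)$ sense of \eqref{uni2}. I would get this from the uniform $C^{1,1}$ bound of Lemma \ref{L4.2}: by Arzel\`a--Ascoli, every sequence $r_j\to0$ has a subsequence along which $u_{r_j}$ converges uniformly on $\overline{B_1}$ to a blowup. By uniqueness of the blowup (Theorem \ref{TH1.2un}) this limit is always $u^*$, so the full family converges uniformly. One also has to fix coordinates once: the rotation normalizing the regular blowup to the form \eqref{we1} is chosen once and for all, which is legitimate since the blowup is unique.
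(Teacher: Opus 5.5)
Your proposal is correct and matches the paper's proof: rescale by $4|x_0|$, apply Lemma~\ref{L6.3} at $y_0=x_0/(4|x_0|)\in\partial B_{1/4}$, drop the $|y_0|$ term to keep only $d^3$, and scale back using the dilation invariance of $\Gamma_{u^*}$. This gives the same constant $C=4\varepsilon^{-1/3}$. You also justify the smallness hypothesis $\|u_{4|x_0|}-u^*\|_{L^\infty(B_1)}\le\varepsilon$ through the $C^{1,1}$ bound of Lemma~\ref{L4.2}, Arzel\`a--Ascoli, and uniqueness of the blowup, which makes explicit a step the paper only asserts.
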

\begin{proof}
Fix $x_0\in \Gamma_u$, and assume $|x_0|$ is small enough. It is easy to see that $u_{4|x_0|}$ solves \eqref{f6} and \eqref{con4}.
Hence by Lemma \ref{L6.3}  we have
\begin{equation}\label{con16}
\Gamma_{u_{4|x_0|}}\cap B_{\frac{1}{2}}\subset \left\{ d^2(z,\Gamma_{u^*})(|z|+ d(z,\Gamma_{u^*}))\le \varepsilon^{-1}||u_{4|x_0|}-u^* ||_{L^\infty (B_1)} \right\}.
\end{equation}
On  one hand,   $\left\{u^*>0\right\}$ is a cone, hence we have
\begin{equation}\label{con17}
d(\lambda z, \Gamma_{u^*})=\lambda d(z,\Gamma_{u^*}) \quad\textrm{for all}\quad  \lambda>0.
\end{equation}
On the other hand, one can find
\begin{equation}\label{con18}
\Gamma_{u_{4|x_0|}}\cap B_{\frac{1}{2}}=\frac{1}{4|x_0|} \Gamma_{u}\cap B_{2|x_0|}.
\end{equation}
Thus, putting  \eqref{con16}-\eqref{con18} together, we obtain
\begin{equation}\label{con19}
\begin{split}
\Gamma_{u}\cap B_{2|x_0|}&\subset  \left\{4|x_0|z:  d^2(z,\Gamma_{u^*})\left(|z|+ d(z,\Gamma_{u^*})\right)\le \varepsilon^{-1}||u_{4|x_0|}-u^* ||_{L^\infty (B_1)} \right\}\\
&= \left\{x:  d^2\left(\frac{x}{4|x_0|} ,\Gamma_{u^*}\right)\left(\frac{|x|}{4|x_0|}+ d(\frac{x}{4|x_0|},\Gamma_{u^*})\right)\le \varepsilon^{-1}||u_{4|x_0|}-u^* ||_{L^\infty (B_1)} \right\}\\
&= \left\{x:  d^2(x ,\Gamma_{u^*})\left(|x|+ d(x,\Gamma_{u^*})\right)\le 64|x_0|^3\varepsilon^{-1}||u_{4|x_0|}-u^* ||_{L^\infty (B_1)} \right\}.
\end{split}
\end{equation}
Furthermore, since  $x_0\in \Gamma_u$, it follows from \eqref{con19} that
\begin{equation*}
d^2(x_0 ,\Gamma_{u^*})\left(|x_0|+ d(x_0,\Gamma_{u^*})\right)\le 64|x_0|^3\varepsilon^{-1}||u_{4|x_0|}-u^* ||_{L^\infty (B_1)},
\end{equation*}
which implies \eqref{con15}, where $C=4\varepsilon^{-\frac{1}{3}}$.
\end{proof}

\section{The weak directional monotonicity } \label{sect The weak directional monotonicity }

Based on a geometric property of a weak directional monotonicity of the global solution, one can give a similar property for the solution of \eqref{f6} near a usual regular point.
In classical obstacle problem, a directional monotonicity of a half-space solution is obvious. However, in our setting, the structure of the cubic homogeneous global solutions is much more complex.
In general, we cannot expect such a strong property. \\

Now, we are in the position to give a weak directional monotonicity of the global solution $u$ to \eqref{g1} with the energy $\Phi(1,u)=\frac{\pi}{81}$.
Denote by $e^*=\left(-\frac{1}{2},\frac{\sqrt{3}}{2}  \right)$ the the unit normal vector of $u^*$ pointing towards its positive set in the first quadrant.
\begin{lem}\label{L7.1}
Suppose $e=(-\sin(\frac{\pi}{6}+\gamma),\cos(\frac{\pi}{6}+\gamma))$ with $\gamma\in(-\frac{\pi}{2}, \frac{\pi}{2})$, $c_0>0$, $\delta>0$, and $\epsilon>0$. If
$e\cdot e^*\ge\frac{c_0+1}{2c_0}\epsilon$, then we have
\begin{equation}\label{we2}
c_0\partial_eu^*-u^*\ge0 \quad\textrm{in}\quad B_1\verb+\+\{0 \}   \cap \left\{\frac{\pi}{6}<\theta<\frac{\pi}{6}+\epsilon \right\}.
\end{equation}
\end{lem}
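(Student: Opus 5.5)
The statement is fully explicit, so I plan to verify \eqref{we2} by a direct computation in polar coordinates, with no compactness argument. By \eqref{we1}, $u^*=\frac{r^3}{9}(1-\sin3\theta)$ in the relevant sector. Writing $e_r=(\cos\theta,\sin\theta)$ and $e_\theta=(-\sin\theta,\cos\theta)$, I will use
\begin{equation*}
\nabla u^*=\partial_r u^*\,e_r+\frac1r\partial_\theta u^*\,e_\theta=\frac{r^2}{3}\Big[(1-\sin3\theta)\,e_r-\cos3\theta\, e_\theta\Big].
\end{equation*}
The key observation is that $e=(-\sin(\frac{\pi}{6}+\gamma),\cos(\frac{\pi}{6}+\gamma))$ is exactly $e_\theta$ evaluated at the angle $\frac{\pi}{6}+\gamma$. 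Likewise $e^*=e_\theta$ at $\frac{\pi}{6}$, so $e\cdot e^*=\cos\gamma$. Consequently $e\cdot e_r(\theta)=\sin(\theta-\frac{\pi}{6}-\gamma)$ and $e\cdot e_\theta(\theta)=\cos(\theta-\frac{\pi}{6}-\gamma)$.

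Next I substitute $\phi:=\theta-\frac{\pi}{6}\in(0,\epsilon)$. Then $\sin3\theta=\cos3\phi$ and $\cos3\theta=-\sin3\phi$, and
\begin{equation*}
c_0\partial_eu^*-u^*=r^2\Big[\frac{c_0}{3}\Big((1-\cos3\phi)\sin(\phi-\gamma)+\sin3\phi\cos(\phi-\gamma)\Big)-\frac{r}{9}(1-\cos3\phi)\Big].
\end{equation*}
Since $0<r<1$, it suffices to show the bracket is nonnegative with $r$ replaced by $1$. I will apply the half-angle identities $1-\cos3\phi=2\sin^2\frac{3\phi}{2}$ and $\sin3\phi=2\sin\frac{3\phi}{2}\cos\frac{3\phi}{2}$, and divide by $2\sin\frac{3\phi}{2}>0$; this is valid for $\phi\in(0,\epsilon)$ once $\epsilon$ is small enough that $\frac{3\phi}{2}<\pi$. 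The addition formula then collapses the $c_0$-term into a single cosine, and the required inequality becomes
\begin{equation*}
c_0\cos\Big(\frac{\phi}{2}+\gamma\Big)\ge \frac13\sin\frac{3\phi}{2}.
\end{equation*}

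Finally I will compare both sides with $\epsilon$. Since cosine is $1$-Lipschitz, $\cos(\frac{\phi}{2}+\gamma)\ge\cos\gamma-\frac{\phi}{2}\ge \cos\gamma-\frac{\epsilon}{2}$. On the other side, $\frac13\sin\frac{3\phi}{2}\le\frac{\phi}{2}\le\frac{\epsilon}{2}$. It therefore suffices that $c_0(\cos\gamma-\frac{\epsilon}{2})\ge\frac{\epsilon}{2}$, that is, $\cos\gamma\ge\frac{c_0+1}{2c_0}\epsilon$. This is exactly the hypothesis $e\cdot e^*\ge\frac{c_0+1}{2c_0}\epsilon$.

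The only step needing care is the trigonometric bookkeeping: identifying $e$ as a rotated $e_\theta$, and recognizing that the combination $\sin\frac{3\phi}{2}\sin(\phi-\gamma)+\cos\frac{3\phi}{2}\cos(\phi-\gamma)=\cos(\frac{\phi}{2}+\gamma)$ collapses to a single cosine. After that the estimate is elementary. The constant $\delta$ plays no role in this argument.
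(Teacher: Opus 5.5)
Your proposal is correct and is essentially the paper's proof. Both arrive at the same factorization $c_0\partial_eu^*-u^*=\frac{2r^2}{3}\sin\frac{3\phi}{2}\bigl(c_0\cos(\gamma+\frac{\phi}{2})-\frac{r}{3}\sin\frac{3\phi}{2}\bigr)$, you via the polar gradient and the paper via Cartesian differentiation, and then use the same bounds $\frac13\sin\frac{3\phi}{2}\le\frac{\phi}{2}$ and $\cos(\gamma+\frac{\phi}{2})\ge\cos\gamma-\frac{\phi}{2}$.

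Your caveat ``once $\epsilon$ is small enough'' is not actually an extra assumption. The hypothesis with $\cos\gamma\le 1$ already forces $\epsilon\le\frac{2c_0}{c_0+1}<2<\frac{2\pi}{3}$, so $\frac{3\phi}{2}<\pi$ and the whole sector lies where the formula \eqref{we1} for $u^*$ holds.
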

\begin{proof}
By differentiating the identity \eqref{we1}  along the direction $e$, we obtain
\begin{equation*}
\begin{split}
\partial_eu^*&= \frac{r^2}{3} \left( - \sin\left(\frac{\pi}{6}+\gamma \right)(\cos\theta-\sin2\theta)+\cos\left( \frac{\pi}{6}+\gamma \right)(\sin\theta-\cos2\theta  )  \right)\\
&=\frac{2r^2}{3}\left( \sin\left(\frac{\pi}{6}+\gamma \right)\sin\frac{ \frac{\pi}{3} -\phi}{2} +\cos\left(\frac{\pi}{6}+\gamma \right)\cos\frac{ \frac{\pi}{3} -\phi}{2} \right)\sin\frac{3\phi}{2}\\
&=\frac{2r^2}{3}\sin\frac{3\phi}{2}\cos\left(\gamma+\frac{\phi}{2}  \right).
\end{split}
\end{equation*}
where $\phi=\theta-\frac{\pi}{6}$. Therefore, we have
\begin{equation}\label{we3}
c_0\partial_eu^*-u^*=\frac{2r^2}{3}\sin\frac{3\phi}{2}\left( c_0\cos\left(\gamma+\frac{\phi}{2}  \right)-\frac{r}{3}\sin\frac{3\phi}{2}            \right).
\end{equation}
Since $\frac{\pi}{6}<\theta<\frac{5\pi}{6}$, one have $0<\phi<\frac{2\pi}{3}$, which implies  $\sin\frac{3\phi}{2}>0$. And hence, to ensure $c_0\partial_eu^*-u^*\ge0$,
it suffices to show that
\begin{equation}\label{we4}
\sin\frac{3\phi}{2}\le3 c_0 \cos\left(\gamma+\frac{\phi}{2}  \right).
\end{equation}
On one hand, we have
\begin{equation}\label{we5}
\frac{1}{3}\sin\frac{3\phi}{2}\le \frac{\phi}{2} <\frac{\epsilon}{2}
\end{equation}
provided that $0<\phi=\theta-\frac{\pi}{6}< \epsilon$. On the other hand,
\begin{equation}\label{we6}
\cos\left(\gamma+\frac{\phi}{2}  \right)\ge \cos\gamma-\frac{\phi}{2}> \cos\gamma-\frac{\epsilon}{2}.
\end{equation}
Combining \eqref{we5}, \eqref{we6}, and the assumption $e\cdot e^*\ge\frac{c_0+1}{2c_0}\epsilon$, we arrive at \eqref{we4}.
This completes the proof of this lemma.
\end{proof}

Next, we give the following important lemma.
\begin{lem}\label{L7.2}
Let $u$ be any solution to \eqref{f6} in $B_1$. Suppose $z\in \{u>0\}$, $B_r(z)\subset\subset B_1\cap \{|x|\ge \frac{1}{10} \}$. If
\begin{equation}\label{we00}
u(z)-\frac{1}{20}\partial_e u(z)>0,
\end{equation}
then we have
\begin{equation}\label{we7}
\frac{r^2}{80}\le \sup_{\{u>0 \}\cap \partial B_r(z) }\left( u-\frac{1}{20}\partial_e u \right).
\end{equation}
\end{lem}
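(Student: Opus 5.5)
This is a Caffarelli-type nondegeneracy argument applied to the auxiliary function
\[
w(x):=u(x)-\tfrac{1}{20}\partial_e u(x)+\tfrac{1}{80}|x-z|^2 .
\]
Here $e$ is a unit vector, $u\in C^{1,1}_{loc}$ by Lemma \ref{L4.2}, and $\partial_e u$ is continuous.

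\textbf{Step 1: subharmonicity of $w$ in $\Omega:=\{u>0\}\cap B_r(z)$.} In $\{u>0\}$ we have $\Delta u=|x|$. Since $|x|$ is smooth away from the origin, interior regularity gives $u\in C^\infty$ there and $\Delta\partial_e u=\partial_e|x|=e\cdot\frac{x}{|x|}$, so $|\Delta\partial_e u|\le 1$. Because $B_r(z)\subset\{|x|\ge\frac1{10}\}$, we get
\[
\Delta w\ \ge\ |x|-\tfrac{1}{20}+\tfrac{1}{80}\cdot 4\ \ge\ \tfrac1{10}-\tfrac1{20}+\tfrac1{20}>0 \quad\text{in }\Omega .
\]
So $w$ is subharmonic, in fact strictly, with plenty of room to spare; the constants $\frac1{20}$ and $\frac1{80}$ are exactly tuned so that the lower bound $|x|\ge \frac1{10}$ absorbs the term from $\partial_e$.

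\textbf{Step 2: maximum principle.} By \eqref{we00}, $w(z)=u(z)-\frac1{20}\partial_eu(z)>0$. The maximum principle on $\Omega$ then gives a point of $\partial\Omega$ where $w>0$. The boundary $\partial\Omega$ splits into two pieces:
\begin{itemize}
\item $\partial\{u>0\}\cap B_r(z)$;
\item $\{u>0\}$-closure $\cap\,\partial B_r(z)$.
\end{itemize}

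\textbf{Step 3: ruling out the free boundary piece.} This is the point that needs care. On $\partial\{u>0\}$, since $u\ge0$ and $u\in C^{1,1}$, we have $u=0$ and $\nabla u=0$, so $\partial_e u=0$. Hence $w=\frac1{80}|x-z|^2\ge0$ there. This is not negative, so strictly speaking the free boundary piece is not excluded, and the usual trick must be adjusted. I would use $\frac{1}{80}$ in the definition of $w$ so that the quadratic term vanishes at $z$. To make the argument rigorous, I would instead take $\tilde w := w - \frac{1}{80}r^2$. On the free boundary this gives $\tilde w\le 0$. Then:
\begin{itemize}
\item if $\tilde w(z)>0$, the supremum is attained on $\partial B_r(z)\cap\overline{\{u>0\}}$;
\item otherwise $w(z)\le \frac{r^2}{80}$, and this case must be handled differently.
\end{itemize}

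The cleanest fix is the one I expect the intended proof uses: argue with the positive part. Take $\Omega':=\{w>0\}\cap\{u>0\}\cap B_r(z)$, which contains $z$, and note that on $\partial\{u>0\}$ we have $w=\frac1{80}|x-z|^2$. Then compare with the harmonic-type barrier so that the maximum of $w$ over $\overline{\Omega'}$ is attained on $\partial B_r(z)$, where $w=u-\frac1{20}\partial_eu+\frac{r^2}{80}$. This yields $\sup_{\partial B_r(z)\cap\{u>0\}}(u-\frac1{20}\partial_eu)\ge w(z)-\frac{r^2}{80}$, which is positive but not yet the claimed bound.

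\textbf{Main obstacle.} The hard step is matching the constant exactly, i.e. getting $\frac{r^2}{80}$ rather than $w(z)-\frac{r^2}{80}$. I would first try the standard sign convention $w:=u-\frac1{20}\partial_eu-\frac1{80}|x-z|^2$, for which $w(z)>0$ and $w\le0$ on the free boundary. Its subharmonicity requires $\Delta u-\frac1{20}\Delta\partial_eu\ge\frac1{20}$, i.e. $|x|-\frac1{20}\ge\frac1{20}$, which holds exactly because $|x|\ge\frac1{10}$. This is the decisive choice. The maximum principle then forces $\sup_{\partial B_r(z)\cap\{u>0\}}w>0$, which is exactly \eqref{we7}.
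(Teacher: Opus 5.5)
Your closing paragraph is a complete and correct proof, and it is the paper's argument: with $w=u-\frac{1}{20}\partial_e u-\frac{1}{80}|x-z|^2$ you have $\Delta w\ge |x|-\frac{1}{10}\ge 0$ in $\{u>0\}\cap B_r(z)$, $w\le 0$ on $\partial\{u>0\}$ because $u=|\nabla u|=0$ there, and $w(z)>0$, so the maximum principle forces a positive value of $w$ on $\{u>0\}\cap\partial B_r(z)$ (the paper also subtracts the constant $u(z)-\frac{1}{20}\partial_e u(z)$ to normalize $\Upsilon(z)=0$, which is only cosmetic). The wrong-sign function of Steps 1--3 and the $\Omega'$/``barrier'' comparison lead nowhere and should be deleted, keeping only the bound $|\Delta\partial_e u|\le 1$ from Step 1 and the observation $\partial_e u=0$ on the free boundary from Step 3, which the final argument uses.
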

\begin{proof}
Consider  the auxiliary function
\begin{equation}\label{we8}
\Upsilon(x)=u(x)-\frac{1}{20}\partial_e u(x)-\left(  u(z)-\frac{1}{20}\partial_e u(z)   \right)-\frac{|x-z |^2}{80}.
\end{equation}
First, it is easy to see that $\Upsilon(z)=0$ and
\begin{equation}\label{we9}
\Upsilon(x)\le-\left(  u(z)-\frac{1}{20}\partial_e u(z)   \right) \quad\textrm{on}\quad x\in \Gamma_u
\end{equation}
thanks to the assumption. On the other hand, by a simple calculation, we have
\begin{equation*}
\Delta \Upsilon(x)=|x|-\frac{1}{10}  \quad\textrm{in}\quad \{u>0\} \cap \left\{|x|\ge \frac{1}{10} \right\}.
\end{equation*}
In particular, thanks to $B_r(z)\subset\subset B_1\cap \{|x|\ge \frac{1}{10} \}$,  we have
\begin{equation*}
\Delta \Upsilon(x)\ge0 \quad\textrm{in}\quad \{u>0\} \cap B_r(z).
\end{equation*}
Therefore by the maximum principle we obtain that
\begin{equation}\label{we10}
0=\Upsilon(z)\le \sup_{ \partial(\{u>0 \}\cap  B_r(z) ) } \Upsilon(x)
\end{equation}
On the other hand, we have $ \partial(\{u>0 \}\cap  B_r(z) )=( \partial \{u>0 \}\cap B_r(z) )\cup( \{u>0 \}\cap \partial B_r(z) ):=S_1\cup S_2$.
This together with \eqref{we10} yields
\begin{equation*}
0\le \max\left\{ \sup_{S_1}\Upsilon(x), \sup_{S_2}\Upsilon(x) \right\}\le \max\left\{ \sup_{\Gamma_u}\Upsilon(x)  , \sup_{S_2}\Upsilon(x) \right\}.
\end{equation*}
Note that \eqref{we9} implies  $\sup_{\Gamma_u}\Upsilon(x) \le -\left(  u(z)-\frac{1}{20}\partial_e u(z)   \right)<0 $, hence one can arrive at
\begin{equation*}
0\le\sup_{S_2}\Upsilon(x)=-\left( u(z)-\frac{1}{20}\partial_e u(z)+\frac{r^2}{80}  \right)+\sup_{\{u>0 \}\cap \partial B_r(z) }\left( u(x)-\frac{1}{20}\partial_e u(x) \right).
\end{equation*}
This together with  \eqref{we00} yields the desired conclusion.
\end{proof}

In the next lemma we show that $C^1$ approximations by the global solution $u^*$ imply the directional monotonicity.
\begin{lem}\label{L7.3}
Let $u$ be any solution to \eqref{f6} in $B_1$. Suppose $x_0\in \Gamma_u\cap\partial B_{\frac{1}{4}}\cap \{x_1>0\}$ and $e=(-\sin(\frac{\pi}{6}+\gamma),\cos(\frac{\pi}{6}+\gamma))$ with $\gamma\in (-\frac{\pi}{2},\frac{\pi}{2})$, and $r>0$.
There exists a small constant $c>0$  such that if
\begin{equation}\label{we11}
||u-u^* ||^{\frac{1}{3}}_{C^1 (B_1)}+r\le ce\cdot e^*,
\end{equation}
then we have
\begin{equation}\label{we12}
\frac{1}{20}\partial_e u-u\ge0 \quad\textrm{in}\quad  B_r(x_0).
\end{equation}
\end{lem}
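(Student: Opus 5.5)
We will argue by contradiction, following the scheme of Lemma \ref{L7.2}. Suppose that at some point $z\in B_r(x_0)$ we have $\frac{1}{20}\partial_e u(z)-u(z)<0$. Then necessarily $u(z)>0$: on $\{u=0\}$ the $C^{1,1}$ regularity gives $u=|\nabla u|=0$, so the expression vanishes there. Hence $u(z)-\frac{1}{20}\partial_e u(z)>0$, which is exactly hypothesis \eqref{we00}.

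Since $|x_0|=\frac14$ and $r$ is small by \eqref{we11}, we have $B_{\rho}(z)\subset\subset B_1\cap\{|x|\ge\frac1{10}\}$ for a radius $\rho$ comparable to a fixed small constant (say $\rho=\frac1{20}$). We would rather choose $\rho$ tied to the flatness; see below. Lemma \ref{L7.2} then produces a point $y\in\{u>0\}\cap\partial B_\rho(z)$ with
\[
u(y)-\tfrac{1}{20}\partial_e u(y)\ge \tfrac{\rho^2}{80}.
\]
The plan is to contradict this with an upper bound for $u-\frac1{20}\partial_e u$ on $\{u>0\}\cap\partial B_\rho(z)$, obtained by comparison with $u^*$. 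Write $\eta:=\|u-u^*\|_{C^1(B_1)}$. Then
\[
u-\tfrac1{20}\partial_eu\le u^*-\tfrac1{20}\partial_eu^*+2\eta .
\]
It therefore suffices to show that $u^*-\frac1{20}\partial_e u^*\le \frac{\rho^2}{160}$ at every point of $\{u>0\}\cap\partial B_\rho(z)$, with $2\eta<\frac{\rho^2}{160}$.

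To bound $u^*-\frac1{20}\partial_eu^*$, use Lemma \ref{L6.4}/Lemma \ref{L6.3} and the fact that $x_0\in\Gamma_u\cap\{x_1>0\}$. These place $x_0$ within distance $C\eta^{1/3}$ of the ray $\theta=\frac{\pi}{6}$ of $\Gamma_{u^*}$. Points of $B_{r+\rho}(x_0)$ then have angle $\theta\in(\frac\pi6-C(r+\rho+\eta^{1/3}),\,\frac\pi6+C(r+\rho+\eta^{1/3}))$.

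In the part lying in $\{u^*>0\}$, apply Lemma \ref{L7.1} with $c_0=\frac1{20}$ and $\epsilon=C(r+\rho+\eta^{1/3})$. The requirement $e\cdot e^*\ge \frac{21}{2}\epsilon$ is guaranteed by \eqref{we11} once $c$ is small and we take $\rho\simeq r+\eta^{1/3}$. The lemma gives $u^*-\frac1{20}\partial_eu^*\le0$ there.

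In the part lying in $\{u^*=0\}$, $u^*$ and $\nabla u^*$ vanish, so the expression is $0$.

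Thus $u-\frac1{20}\partial_eu\le 2\eta$ on $\partial B_\rho(z)$. To contradict $\frac{\rho^2}{80}$ we need $\eta\ll\rho^2$. With $\rho\simeq r+\eta^{1/3}$ we have $\rho^2\gtrsim \eta^{2/3}\gg\eta$ for $\eta$ small, so this closes.

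The main obstacle is the bookkeeping of scales. The radius $\rho$ must be large enough that $\rho^2/80$ beats $2\eta$, yet small enough (relative to $e\cdot e^*$) that the whole ball $B_\rho(z)$ stays inside the thin angular sector $\{\frac\pi6-\epsilon<\theta<\frac\pi6+\epsilon\}$ where Lemma \ref{L7.1} applies. The choice $\rho=K(r+\eta^{1/3})$ with $K$ large but fixed, followed by $c$ small depending on $K$, should reconcile both, using the exponent $\frac13$ in \eqref{we11}. A secondary point is to check that the negative-angle side ($\theta<\frac\pi6$, where $u^*=0$) causes no trouble; there the bound is trivially $0$, as noted.
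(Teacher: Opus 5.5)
Your proposal is correct and is essentially the paper's own argument. Both run a contradiction via Lemma \ref{L7.2} on a ball of radius comparable to $\|u-u^*\|^{1/3}_{C^1(B_1)}$ around the bad point, locate $x_0$ near $\Gamma_{u^*}\cap\partial B_{1/4}\cap\{x_1>0\}$ using Lemma \ref{L6.3}, and apply Lemma \ref{L7.1} with $c_0=\frac{1}{20}$ to get $u^*-\frac{1}{20}\partial_e u^*\le 0$ on the comparison circle, so that $\rho^2/80$ is bounded by a multiple of $\|u-u^*\|_{C^1(B_1)}$, which is incompatible with $\rho$ being of order $\|u-u^*\|^{1/3}_{C^1(B_1)}$. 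Your choice $\rho=K(r+\|u-u^*\|^{1/3}_{C^1(B_1)})$ also automatically covers the case $u\equiv u^*$, which the paper treats separately; any fixed $K$ works once $c$ is small, so $K$ need not be large.
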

\begin{proof}
The statement is evident provided that $x\in \{u=0\}$, so we next assume that $x\in\{u>0\}\cap B_r(x_0)$.
In the case $u\equiv u^*$, the statement is obvious. Now, assume that $u\not\equiv u^*$.
Arguing by contradiction, suppose the statement fails  for some $y\in B_r(x_0)\cap \{u> 0\}$, and let $x_{u^*}=\Gamma_{u^*}\cap\partial B_{\frac{1}{4}}\cap \{x_1>0\}$.It is easy to see that
$x_{u^*}= \left( \frac{\sqrt{3}}{8},  \frac{1}{8}  \right)$.\\

\textbf{Step 1.} If $\eta\in \left(0, \frac{1}{40}  \right)$ and $\arcsin4\eta\le \frac{2}{21}e\cdot e^*$, then we have $\frac{1}{20}\partial_e u^*-u^*\ge0 \,\ in\, B_{\eta}(x_{u^*})$. \\

\noindent For any $x\in B_{\eta}(x_{u^*})$ with $\eta\in \left(0, \frac{1}{40} \right)$, one can deduce that $x_1>x_{(u^*)_1}-\eta>\frac{1}{10}$, which yields $B_{\eta}(x_{u^*})\subset \left\{|x|\ge \frac{1}{10}   \right\}$. Hence by applying Lemma \ref{L7.1} with $c_0=\frac{1}{20}$,
one can find $\frac{1}{20}\partial_e u^*-u^*\ge0 \,\ in\, B_{\eta}(x_{u^*})$ provided that $\arcsin4\eta\le \frac{2}{21}e\cdot e^*$.
\\

\textbf{Step 2.} If $c>0$ is small, $0<\lambda<\frac{1}{120}$, $x\in B_{\frac{1}{120}}(x_0)\cap \{u>0\}$ with $\frac{1}{20}\partial_e u(x)-u(x)<0$, then we have
\begin{equation}\label{we13}
\frac{\lambda^2}{80}\le \sup_{\{u>0 \}\cap \partial B_\lambda(x) }\left( u-\frac{1}{20}\partial_e u \right).
\end{equation}

\noindent If  $c>0$ is small enough, then we have $ ||u-u^* ||_{L^\infty (B_1)}<< \varepsilon$.  With the aid of Lemma \ref{L6.3},
one can derive that
\begin{equation*}
\Gamma_u\cap B_{\frac{1}{2}}\subset \left\{ d^2(x,\Gamma_{u^*})(|x|+ d(x,\Gamma_{u^*}))\le \varepsilon^{-1}||u-u^* ||_{L^\infty (B_1)} \right\}.
\end{equation*}
Hence, for $x_0\in \Gamma_u\cap\partial B_{\frac{1}{4}}\cap \{x_1>0\}$, we obtain
\begin{equation}\label{we14}
 d^2(x_0,\Gamma_{u^*})(|x_0|+ d(x_0,\Gamma_{u^*}))\le \varepsilon^{-1}||u-u^* ||_{L^\infty (B_1)}.
\end{equation}

Now, we claim that
\begin{equation}\label{we15}
|x_0-x_{u^*}|\le 2d(x_0,\Gamma_{u^*}).
\end{equation}
For any $x\in\partial B_{\frac{1}{4}}\cap  \{x_1>0\}$, we have $x=\left( \frac{\cos\theta}{4},\frac{\sin\theta}{4} \right)$ with $\theta\in [0, \frac{\pi}{2})\cup (\frac{3\pi}{2},2\pi )$.
By a simple calculation, we have
\begin{equation}\label{we16}
|x-x_{u^*}|=\frac{\sqrt{2}}{4} \sqrt{1-\cos(\theta-\frac{\pi}{6})}.
\end{equation}
We divide the proof into two cases.  Case $(\romannumeral 1)$: $\theta\in [0, \frac{\pi}{2})\cup [\frac{5\pi}{3},2\pi  )$, we deduce that $d(x,\Gamma_{u^*})=\frac{1}{4}|\sin(\theta-\frac{\pi}{6})|$. Note that $\theta-\frac{\pi}{6}\in (-\frac{\pi}{3},\frac{\pi}{2})\cup(\frac{3\pi}{2},2\pi)$, and hence  $\cos(\theta-\frac{\pi}{6} )\ge0$. This derive that
\begin{equation}\label{we17}
\frac{|x-x_{u^*}|}{d(x,\Gamma_{u^*}) }=\frac{\sqrt{2}\sqrt{1-\cos( \theta-\frac{\pi}{6})} }{|\sin(\theta-\frac{\pi}{6})|}=\frac{\sqrt{2}}{\sqrt{1+\cos(\theta-\frac{\pi}{6})}}\le \sqrt{2},
\end{equation}
which implies \eqref{we15} by choosing $x=x_0$. Case $(\romannumeral 2)$: $\theta\in(\frac{3\pi}{2}, \frac{5\pi}{3} )$. On one hand, we have $d(x,\Gamma_{u^*})=\frac{1}{4}$. On the other hand, by \eqref{we16}, we have $|x-x_{u^*}|\le \frac{1}{2}$. Thus, we have $\frac{|x-x_{u^*}|}{d(x,\Gamma_{u^*})}\le 2$, which derives that \eqref{we15} by choosing $x=x_0$.  Combining \eqref{we14} and \eqref{we15}, we have
\begin{equation}\label{wwee18}
|x_0-x_{u^*}|\le c_1 ||u- u^* ||^{\frac{1}{3}}_{L^\infty (B_1)}
\end{equation}
for some $c_1>0$, which together the smallness assumption on $c$ yields
$|x_0-x_{u^*}|<\frac{1}{120} $. So we have $(x_0)_1>(x_{u^*})_1-\frac{1}{120} $. This implies $B_{\frac{1}{120}}(x_0)\subset \{x_1> (x_{u^*})_1-\frac{1}{60}   \} $. For every $x\in B_{\frac{1}{120}}(x_0)$, we have
\begin{equation}\label{we18}
B_{\frac{1}{120}}(x)\subset \{x_1> (x_{u^*})_1-\frac{1}{40} \} \subset \{|x|\ge \frac{1}{10} \}.
\end{equation}
Therefore, if $\lambda\in (0, \frac{1}{120})$, it follows from \eqref{we18} that $B_{\lambda}(x)\subset \{|x|\ge \frac{1}{10} \}$. Furthermore, by using Lemma \ref{L7.2}, we prove \eqref{we13}.\\

\textbf{Step 3.} If $c>0$ is small, then we have $\frac{1}{20}\partial_e u-u\ge0 \,\ in\, B_r(x_0)$.

\noindent  For $r\in(0, \frac{1}{120})$, we have $y\in B_r(x_0)\cap \{u>0\}$ with $\frac{1}{20}\partial_e u(y)-u(y)<0$.
Taking
\begin{equation}\label{wwee19}
r_0=4\sqrt{5}||u-u^* ||^{\frac{1}{3}}_{C^1 (B_1)},
\end{equation}
we have $0<r_0<\frac{1}{120}$ provided that $c$ is small. Therefore, by step 2, we infer that
\begin{equation}\label{we19}
\frac{{r_0}^2}{80}\le \sup_{\{u>0 \}\cap \partial B_{r_0}(y) }\left( u-\frac{1}{20}\partial_e u \right).
\end{equation}
Define
\begin{equation}\label{we20}
\eta:= r_0+r+|x_0-x_{u^*}|.
\end{equation}
If $c$ is small enough, by  \eqref{we11}, \eqref{wwee18}, \eqref{wwee19},  one can deduce that $\eta\in (0, \frac{1}{40})$ is also small enough. So, we can infer that $\arcsin4\eta\le 2\pi\eta$. This implies that
\begin{equation*}
\frac{21}{2}\arcsin4\eta\le 21\pi(r_0+r+|x_0-x_{u^*} | )\le e\cdot e^*
\end{equation*}
provided that $c$ is small. Thus, by step1, we have $ \frac{1}{20}\partial_e u^*-u^*\ge0 $ in $B_{\eta}(x_{u^*})$. Furthermore, one can derive that $B_{r_0}(y)\subset B_{r_0+|x_0-y|}(x_0)\subset B_{r_0+|x_0-y|+|x_{u^*}-x_0|}(x_{u^* }) \subset B_\eta $. Hence we have
\begin{equation}\label{we21}
u^*-\frac{1}{20}\partial_e u^*\le0  \quad\textrm{in}\quad B_{r_0}(y).
\end{equation}
With the help of \eqref{we19} and \eqref{we21}, we infer that
\begin{equation*}
\begin{split}
\frac{{r_0}^2}{80}&\le \sup_{\{u>0 \}\cap \partial B_{r_0}(y) }\left( u-\frac{1}{20}\partial_e u \right)\le \sup_{\{u>0 \}\cap \partial B_{r_0}(y) }\left( u^*-\frac{1}{20}\partial_e u^* \right)\\
&\quad+\sup_{\{u>0 \}\cap \partial B_{r_0}(y)}\left(u- u^*-\frac{1}{20}\partial_e(u- u^* )\right)\\
&\le || u- u^*||_{C^1 (B_1)},
\end{split}
\end{equation*}
which implies $ r_0\le 4\sqrt{5}||u-u^* ||^{\frac{1}{2}}_{C^1 (B_1)}$. This contradicts \eqref{wwee19}.
\end{proof}

Now, using the previous lemma, we can show that there is a cone of directions $e$
in which the solution is monotone near the free boundary point $x_0$. Before giving the proof, we give some definitions. \\

Given a constant $\delta\in (0,1)$, we define the open cone and its dual cone as follows:
\begin{equation*}
C_{\delta}:=\left\{x\in \mathbb{R}^2: x\cdot e^*>\delta|x| \right\} \quad and \quad  C^*_{\delta}:=\left\{x\in \mathbb{R}^2: x\cdot y\ge0  \,\ \textrm{for all} \,\  y\in C_{\delta} \right\}
\end{equation*}

\begin{lem}\label{L7.4}
Let $u$ be any solution to \eqref{f6}, and suppose $x_0\in \Gamma_u\cap\partial B_{\frac{1}{4}}\cap \{x_1>0\}$. If $r>0$, $\delta\in(0,1)$, and $||u-u^* ||^{\frac{1}{3}}_{C^1 (B_1)}+r\le c\delta$ with $c$ as in Lemma \ref{L7.3},  then
\begin{equation}\label{L743}
u=0 \quad in\quad \Theta^{1}_{x_0}:= \{x\in B_r(x_0):x=x_0-te \,\ with \,\ e\in C_\delta,\, t>0\},
\end{equation}
\begin{equation}\label{L744}
u>0 \quad in\quad \Theta^{2}_{x_0}:=\{x\in B_r(x_0):x=x_0+te \,\ with \,\ e\in C_\delta,\, t>0\}.
\end{equation}
\end{lem}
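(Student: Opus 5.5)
The idea is to feed Lemma \ref{L7.3} the directions $e\in C_\delta$ and then integrate the resulting differential inequality along rays through $x_0$. First I would check that every unit vector $e\in C_\delta$ is admissible in Lemma \ref{L7.3}. Such an $e$ satisfies $e\cdot e^*>\delta$, so it can be written as $e=(-\sin(\frac{\pi}{6}+\gamma),\cos(\frac{\pi}{6}+\gamma))$ with $\cos\gamma=e\cdot e^*>\delta>0$, that is, $\gamma\in(-\frac{\pi}{2},\frac{\pi}{2})$. The hypothesis $\|u-u^*\|^{1/3}_{C^1(B_1)}+r\le c\delta$ then gives $\|u-u^*\|^{1/3}_{C^1(B_1)}+r\le c\, e\cdot e^*$. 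Hence \eqref{we12} holds:
\begin{equation*}
\tfrac{1}{20}\partial_e u-u\ge0 \quad\textrm{in}\quad B_r(x_0)\quad\textrm{for every unit } e\in C_\delta .
\end{equation*}
Since $u\ge0$, this gives $\partial_e u\ge 20u\ge0$ in $B_r(x_0)$, so $u$ is nondecreasing along every direction of $C_\delta$ inside $B_r(x_0)$. Because $C_\delta$ is a cone, it is enough to work with unit directions.

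Next I would use convexity: for $x=x_0\pm te\in B_r(x_0)$, the whole segment from $x_0$ to $x$ lies in $B_r(x_0)$. To prove \eqref{L743}, take $x=x_0-te$ and set $g(s)=u(x_0-se)$ for $s\in[0,t]$. Then $g'(s)=-\partial_e u\le -20g\le0$, and $g(0)=u(x_0)=0$ because $x_0\in\Gamma_u$. So $g$ is nonincreasing and nonnegative, which forces $g\equiv0$; in particular $u(x)=0$.

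For \eqref{L744} I would argue by contradiction. Suppose $u(x_0+te)=0$ for some $e\in C_\delta$ and $t>0$ with $x_0+te\in B_r(x_0)$. Monotonicity along $e$ together with $u\ge0$ gives $u=0$ on the segment $[x_0,x_0+te]$. Since $C_\delta$ is open, the same holds for every direction $e'$ in a small neighbourhood of $e$, with the endpoint again chosen in $B_r(x_0)$. I then need to show that $u$ vanishes on a whole open set near $x_0$, and do it so that $x_0$ becomes an interior zero point.

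Combining the monotone directions, I would show that $u=0$ on an open set $U$ of the form $\{x_0-se+s'e': \ldots\}$. Since $-C_\delta$ directions are already in the zero set by \eqref{L743}, the union of $\Theta^1_{x_0}$ with a small sector of directions around $e$ covers a full neighbourhood of $x_0$. The convex hull of these two cones covers a full neighbourhood exactly when the angle they leave uncovered is less than $\pi$, and that is not automatic. To avoid relying on this, I would use the strict inequality $\partial_e u\ge 20u$ more cleverly: if $u(y)>0$ at some point, then $u>0$ along all forward rays $y+C_\delta$ inside the ball. This follows from $g'\ge20g$ with $g(0)>0$, which gives $g(s)\ge g(0)e^{20s}>0$.

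Now every point $x_0+te$ is the forward endpoint, along a $C_\delta$ direction, of points arbitrarily close to $x_0$ that lie in directions outside $-\overline{C_\delta}$. Since $x_0\in\Gamma_u$, there are points $y$ arbitrarily close to $x_0$ with $u(y)>0$. I need such a $y$ to satisfy $x_0+te\in y+C_\delta$. This holds as soon as $|y-x_0|$ is small compared with $t\cdot\operatorname{dist}(e,\partial C_\delta)$, because openness of $C_\delta$ makes $x_0+te-y$ remain in $C_\delta$. Hence $u(x_0+te)>0$, which contradicts the assumption and proves \eqref{L744}.

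The main obstacle is this last step: producing positive points $y$ near $x_0$ and checking that the segment from $y$ to $x_0+te$ stays in $B_r(x_0)$. For the latter I would shrink the statement slightly or choose $y$ close enough, using openness of $B_r(x_0)$.
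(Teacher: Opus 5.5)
Your proof is correct and is essentially the paper's: both use Lemma \ref{L7.3} to get $\partial_e u\ge 0$ on $B_r(x_0)$ for every $e\in C_\delta$ and then integrate along segments. Your forward propagation of positivity, from points $y$ near $x_0$ to $x_0+te$, is the contrapositive of the paper's step, in which $u(z)=0$ forces $u=0$ on $(z-C_\delta)\cap B_r(x_0)$, an open set containing $x_0$.

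The obstacle you flag is not real. Positive points near $x_0$ exist because $x_0\in\Gamma_u$, and the segment from $y$ to $x_0+te$ stays in $B_r(x_0)$ by convexity of the ball. Plain monotonicity is enough, so neither the $e^{20s}$ growth nor the convex-hull detour is needed.
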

\begin{proof}
 By Lemma \ref{L7.3}, we have
\begin{equation}\label{we23}
\partial_e u\ge0 \,\ in \,\  B_r(x_0) \quad\textrm{for all}\quad  e\in C_\delta.
\end{equation}
Since $u(x_0)=0$, it follows from \eqref{we23} and the non-degeneracy of $u$  that $u(x_0-te)=0$ for all $t>0$ and $e\in C_\delta$, which implies that $u=0$ in $\Theta^{1}_{x_0}$.  On the other hand, if there exists $z\in \Theta^{2}_{x_0}$  such that $u(z)=0$, then one can find $u(x)=0$
in $\{x\in B_r(x_0): x=z-te \,\ with\,\ e\in C_\delta,\,\ t>0  \}$. Now, repeating the same argument, we have $x_0\in \{u=0\}^\circ$, which is contradict $x_0\in \Gamma_u$.
\end{proof}

As a sequence of the previous lemma, we have the following fact.
\begin{lem}\label{L7.5}
Let $u$ be any solution to \eqref{f6}. Suppose $0$ is a regular point. Then there exists a constant $\rho>0$ such that all points on $\Gamma_u\cap B_{\rho}\cap \{x_1>0 \}$ are usual regular free boundary points.
\end{lem}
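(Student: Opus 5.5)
The plan is to combine the uniqueness of the blow-up (Theorem \ref{TH1.2un}) with the directional monotonicity of Lemmas \ref{L7.3}--\ref{L7.4}, applied to the rescalings $u_{4|x_0|}$. Since $0$ is regular, Definition \ref{Def1} and Lemma \ref{L5.4} show that every blow-up has energy $\frac{\pi}{81}$. By Corollary \ref{Coro5.1} every blow-up is then some $h_{\theta_1}$. By Theorem \ref{TH1.2un} the blow-up is unique, so after a rotation it equals $u^*$ from \eqref{we1}.

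The first step is to upgrade the $L^1(\partial B_1)$ convergence \eqref{uni2} to $C^1(B_1)$ convergence. Lemma \ref{L4.2} bounds $u_r$ uniformly in $C^{1,1}(B_1)$. Together with uniqueness of the limit, the Arzel\`a--Ascoli theorem gives $\|u_r-u^*\|_{C^1(B_1)}\to0$ as $r\to0$, not merely along subsequences.

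Next, fix $x_0\in\Gamma_u\cap B_\rho\cap\{x_1>0\}$ and set $s=4|x_0|$, so that $y_0:=x_0/s\in\Gamma_{u_s}\cap\partial B_{1/4}$. Lemma \ref{L6.4} shows $y_0$ lies near $x_{u^*}=(\frac{\sqrt3}{8},\frac18)$, which sits in the first quadrant; hence $y_0\in\{x_1>0\}$ for $\rho$ small. Fix some $\delta\in(0,1)$, for instance $\delta=\frac12$, and choose $\rho$ so small that $\|u_s-u^*\|^{1/3}_{C^1(B_1)}\le \frac{c\delta}{2}$ for all $s<4\rho$. Then take $r=\frac{c\delta}{2}$. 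Lemma \ref{L7.4} applies to $u_s$ at $y_0$: the zero set contains the cone $y_0-C_\delta$ and the positivity set contains $y_0+C_\delta$ within $B_r(y_0)$. Scaling back by $s$ gives the same statement for $u$ at $x_0$ in $B_{sr}(x_0)$. Note that $B_{sr}(x_0)$ stays inside $\{|x|\ge \frac{|x_0|}{2}\}$, where $f=|x|>0$. There $u$ solves a classical obstacle problem with nondegenerate forcing $f\ge\frac{|x_0|}{2}>0$, and $f$ is smooth there.

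Finally, I conclude that $x_0$ is a usual regular point. The exterior cone condition (a cone of fixed opening $\delta$ inside $\{u=0\}$) rules out the usual singular case. At a classical singular point the blow-up is a quadratic polynomial $\frac12x\cdot Ax$, whose zero set $\{u=0\}$ has zero density. That contradicts the cone $y_0-C_\delta$ sitting in $\{u=0\}$, which has positive density uniformly at all small scales. Hence, by Caffarelli's dichotomy for $\Delta u=f\chi_{\{u>0\}}$ with $f\ge\lambda>0$ near $x_0$, $x_0$ is a usual regular point. Alternatively, the Lipschitz-graph structure itself implies the half-space blow-up.

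The main obstacle is the uniformity in the middle step. One must check that the hypotheses of Lemmas \ref{L7.3} and \ref{L7.4}, which assume $x_0\in\partial B_{1/4}$, are applied to the rescaled function $u_s$. One must also check that the smallness of $\|u_s-u^*\|_{C^1}$ holds uniformly in $s$, which is exactly where uniqueness of the blow-up is indispensable. I would also verify that the cone $C_\delta$ is compatible with the constraint $\gamma\in(-\frac{\pi}{2},\frac{\pi}{2})$ in Lemma \ref{L7.3}: every $e\in C_\delta$ satisfies $e\cdot e^*>\delta>0$.
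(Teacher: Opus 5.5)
Your proposal is correct and follows essentially the paper's argument: rescale at $s=4|x_0|$, apply Lemma \ref{L7.4} to $u_s$ at $y_0=x_0/s\in\partial B_{1/4}$, scale back to get an open cone of the zero set with vertex $x_0$ (where $f=|x|\ge |x_0|/2>0$), and rule out a usual singular point because its nonzero quadratic blow-up cannot vanish on an open cone. Fixing $\delta=\frac12$ and $r=\frac{c\delta}{2}$ is a small simplification, since it removes the paper's case split ($u\equiv u^*$ near $0$ versus not). Your Arzel\`a--Ascoli argument for $\|u_r-u^*\|_{C^1(B_1)}\to0$ supplies a step the paper only asserts.
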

\begin{proof}
 Case $(\romannumeral 1)$: There exists $\rho>0$ such that $u= u^*$ in $B_\rho$.

\noindent Then  for any  $x_0\in \Gamma_u\cap B_\rho\cap \{x_1>0 \}$, by Caffarelli's classical results \cite{Caf1}, we have the blowup limit at $x_0$ is a halfspace  solution, which means $x_0$ is a usual regular point.

Case $(\romannumeral 2)$: $u\neq u^*$ in $B_\rho$ for all $\rho>0$.

\noindent In this case we have
\begin{equation}\label{we24}
||u_\rho-u^* ||_{C^1 (B_1)}>0.
\end{equation}
Since $||u_\rho-u^* ||_{C^1 (B_1)}\to 0$ as $\rho\to0$, for any $\sigma>0$ there exists  $\rho(\sigma)>0$ such that $||u_\rho-u^* ||_{C^1 (B_1)}\le \sigma$ provided that $\rho<4\rho(\sigma)$. For $x_0\in \Gamma_u\cap B_\rho\cap \{x_1>0 \}$, by choosing $\sigma=(\frac{c}{3})^3$, $\rho=4|x_0|$, then we have
\begin{equation}\label{we25}
||u_{4|x_0|}-u^* ||^{\frac{1}{3}}_{C^1 (B_1)}\le \frac{c}{3}
\end{equation}
with $|x_0|<\rho(\sigma)$. Let $y_0=\frac{1}{4|x_0|}x_0$, then we have $y_0\in \Gamma_{u_{4|x_0|}}\cap \partial B_{\frac{1}{4}}\cap \{x_1>0 \}$. Taking $r=||u_{4|x_0|}-u^* ||^{\frac{1}{3}}_{C^1 (B_1)}$, and $\delta=\frac{3}{c}||u_{4|x_0|}-u^* ||^{\frac{1}{3}}_{C^1 (B_1)}$, with the help of \eqref{we24} and \eqref{we25}, one can derive that $||u_{4|x_0|}-u^* ||^{\frac{1}{3}}_{C^1 (B_1)}+r<c\delta  $. Applying Lemma \ref{L7.4}, we obtain \begin{equation*}
\begin{split}
u_{4|x_0|}&=0 \quad in\quad \Theta^{1}_{y_0}:= \{y\in B_r(y_0):y=y_0-te \,\ with \,\ e\in C_\delta,\, t>0\},\\
u_{4|x_0|}&>0 \quad in\quad \Theta^{2}_{y_0}:=\{y\in B_r(y_0):y=y_0+te \,\ with \,\ e\in C_\delta,\, t>0\}.
\end{split}
\end{equation*}
Then rescaling back to $u$, we have
\begin{equation}\label{we26}
u=0 \quad in\quad \Theta^{1}_{x_0}:=\{x\in B_{4|x_0|r}(x_0):x=x_0-4|x_0|te \,\ with \,\ e\in C_\delta,\, t>0\},
\end{equation}
\begin{equation}\label{we27}
u>0 \quad in\quad \Theta^{2}_{x_0}:=\{x\in B_{4|x_0|r}(x_0):x=x_0+4|x_0|te \,\ with \,\ e\in C_\delta,\, t>0\}.
\end{equation}
If $x_0$ is a usual singular point, Caffarelli \cite{Caf1,Caf2} showed the blowup profile at $x_0$ is a nonnegative homogenous  quadratic polynomial denoted by $p_{x_0}(x)$. Then with the aid of homogeneity and \eqref{we26}, we have $p_{x_0}(x)=0$ in $x_0-C_\delta$  which is nonempty open cone. Furthermore, we have $x_0-C_\delta\subset \ker(D^2 p_{x_0}(x))$. This yields $\dim( \ker(D^2 p_{x_0}(x))  )\ge2$. Hence we have $p_{x_0}(x)\equiv0$ in $\mathbb{R}^2$, a contradiction.
\end{proof}

In the next lemma,  we give the convergence of the normal of the free boundary to the normal of the free boundary of the blowup limit at regular points.
\begin{lem}\label{L7.6}
Let $u$ be any solution to \eqref{f6}. Suppose $0$ is a regular point with blowup $u^*$. There exist $\rho>0$ and $C>0$  such that if  $x_0\in \Gamma_u\cap B_{\rho}\cap \{x_1>0 \}$, then we have
\begin{equation}\label{}
|e_{x_0}-e^*|\le C||u_{4|x_0|}-u^* ||^{\frac{1}{3}}_{C^1 (B_1)},
\end{equation}
where $e_{x_0}$ is the  normal to $\Gamma_u$ at $x_0$.
\end{lem}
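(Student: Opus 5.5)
The plan is to run the cone-of-monotonicity argument of Lemma \ref{L7.4} at the scale $4|x_0|$, choosing the opening parameter $\delta$ proportional to $\sigma:=\|u_{4|x_0|}-u^*\|^{\frac13}_{C^1(B_1)}$. The deviation of the normal from $e^*$ is then controlled by $\delta$.

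\textbf{Setup.} By Theorem \ref{TH1.2un} the blowup $u^*$ at $0$ is unique. By Lemma \ref{L4.2} the convergence $u_\rho\to u^*$ holds in $C^1(B_1)$, so $\sigma\to0$ as $|x_0|\to0$. By Lemma \ref{L7.5} we may choose $\rho$ so small that every $x_0\in\Gamma_u\cap B_\rho\cap\{x_1>0\}$ is a usual regular point. Caffarelli's theory then says $\Gamma_u$ is a $C^1$ curve near $x_0$, so the normal $e_{x_0}$ is well defined. If $\sigma=0$ at some scale, then $u=u^*$ near $x_0$ and $e_{x_0}=e^*$ trivially. Otherwise we rescale: put $y_0=x_0/(4|x_0|)\in\Gamma_{u_{4|x_0|}}\cap\partial B_{1/4}\cap\{x_1>0\}$, choose $r=\sigma$ and $\delta=\frac{3}{c}\sigma<1$, where $c$ is the constant of Lemma \ref{L7.3}. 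As in the proof of Lemma \ref{L7.5}, Lemma \ref{L7.4} applies. Rescaling back gives that near $x_0$:
\begin{itemize}
\item $u=0$ on $x_0-C_\delta$;
\item $u>0$ on $x_0+C_\delta$;
\end{itemize}
both restricted to the ball $B_{4|x_0|r}(x_0)$.

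\textbf{Normal inside the dual cone.} Because $\Gamma_u$ is $C^1$ at $x_0$, the blowup of $u$ at $x_0$ is a half-space solution $\frac{f(x_0)}{2}\max\{(x-x_0)\cdot e_{x_0},0\}^2$, with $f(x_0)=|x_0|>0$. This blowup inherits the two cone inclusions. Since $u>0$ on $x_0+C_\delta$, the half-space $\{(x-x_0)\cdot e_{x_0}>0\}$ must contain the cone $C_\delta$, which means $e\cdot e_{x_0}\ge0$ for all $e\in C_\delta$, i.e. $e_{x_0}\in C^*_\delta$. The only subtle point is that the cones hold just inside $B_{4|x_0|r}(x_0)$; this is harmless because a blowup at $x_0$ only sees an arbitrarily small neighbourhood of $x_0$.

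\textbf{Elementary geometry.} In $\mathbb{R}^2$, $C_\delta$ is the sector of directions making angle less than $\arccos\delta$ with $e^*$. Its dual $C^*_\delta$ therefore consists of directions making angle at most $\frac{\pi}{2}-\arccos\delta=\arcsin\delta$ with $e^*$. Hence
\[
|e_{x_0}-e^*|\le 2\sin\Bigl(\tfrac12\arcsin\delta\Bigr)\le \arcsin\delta\le \tfrac{\pi}{2}\delta=\tfrac{3\pi}{2c}\,\sigma,
\]
which is the claim with $C=\frac{3\pi}{2c}$.

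\textbf{Main obstacle.} The hardest step is verifying the hypotheses of Lemma \ref{L7.4} with this particular $\delta$ and $r$. We need $\sigma+r=2\sigma\le c\delta=3\sigma$, which holds, and $\delta<1$, which holds once $\sigma<c/3$; both are guaranteed by taking $\rho$ small. Beyond that, one must check that the non-degeneracy and monotonicity steps inside Lemma \ref{L7.4} really deliver the cones in a ball of radius comparable to $|x_0|\sigma$. These checks are exactly the ones already carried out in the proof of Lemma \ref{L7.5}, so I would reuse that computation verbatim.
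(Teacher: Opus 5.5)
Your proposal is correct and follows the paper's proof essentially step for step. You use Lemma \ref{L7.5} to make $x_0$ a usual regular point with $C^1$ free boundary, apply Lemma \ref{L7.4} at scale $4|x_0|$ with $r=\sigma$ and $\delta=\frac{3}{c}\sigma$ to get $e_{x_0}\in C^*_\delta$, and finish with elementary geometry; your bound $2\sin(\tfrac12\arcsin\delta)$ is exactly the paper's $\sqrt{2-2\sqrt{1-\delta^2}}\le\sqrt2\,\delta$. One small point: it is cleaner to pass the zero-set inclusion ($u=0$ on $x_0-C_\delta$) to the half-space blowup, since positivity is not preserved under limits without invoking nondegeneracy; the zero-set inclusion gives $e\cdot e_{x_0}\ge 0$ for all $e\in C_\delta$ directly.
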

\begin{proof}
 With the aid of Lemma \ref{L7.5}, we have any  $x_0\in \Gamma_u\cap B_{\rho}\cap \{x_1>0 \}$ is a usual regular  point. Therefore, $\Gamma_u$ has a tangent line at all these points. Applying \eqref{we26} and \eqref{we27}, one can find $e_{x_0}\cdot e^*\ge0$ for all $e\in C_\delta$ with $\delta=\frac{3}{c}||u_{4|x_0|}-u^* ||^{\frac{1}{3}}_{C^1 (B_1)}$. This implies that $e_{x_0}$ is from the dual of $C_{\delta}$, i.e., $e_{x_0}\in \overline{C_{\sqrt{1-\delta^2}}}$. Hence we have $e_{x_0}\cdot e^*\ge \sqrt{1-\delta^2}$. So, one can infer that
\begin{equation*}
\begin{split}
|e_{x_0}- e^*|= \sqrt{2-2e_{x_0}\cdot e^*  }\le \sqrt{2}\sqrt{1-\sqrt{1-\delta^2}}\le \sqrt{2}\delta.
\end{split}
\end{equation*}
This completes the proof.
\end{proof}

\section{Free boundary as a graph near regular points} \label{sect Free boundary as a graph near regular points}

Next, we will show that the free boundary is a  Lipschitz graph near the regular point.

\begin{lem}\label{L8.1}
Let $u$ be any solution to \eqref{f6}, and suppose $0$ is a regular point. There exist $\varrho>0$ such that if  $x_1\in [0,\frac{\varrho}{4})$, then there exists a unique $x_2$ satisfying $(x_1,x_2)\in \Gamma_u\cap B_{\varrho}$.
\end{lem}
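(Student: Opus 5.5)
The plan is to prove existence and uniqueness separately. Both parts rest on the cone-monotonicity structure already obtained for free boundary points in the right half-plane. We normalize so that the blowup at $0$ is $u^*=\frac{r^3}{9}(1-\sin3\theta)\chi_{\{\pi/6<\theta<5\pi/6\}}$; this is possible by Theorem \ref{TH1.2un} and the rotation fixed in Section \ref{sect Convergence of the free boundary}. Near the origin, in $\{x_1>0\}$, the free boundary of $u^*$ is the ray $\theta=\pi/6$, i.e. the line $x_2=\frac{\sqrt3}{3}x_1$. Since $e^*=(-\frac12,\frac{\sqrt3}{2})$ has positive $x_2$-component, the vertical direction $e_2=(0,1)$ lies in $C_\delta$ as soon as $\delta<\frac{\sqrt3}{2}$. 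Vertical lines are therefore the natural lines along which to find exactly one crossing.

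\textbf{Existence.} Choose $\varrho$ so small that $\|u_\rho-u^*\|_{C^1(B_1)}$ is below the threshold of Lemma \ref{L6.2} for all $\rho\le 4\varrho$; this uses Lemma \ref{L4.2}, uniqueness of the blowup and compactness. Fix $x_1\in(0,\varrho/4)$ and consider the segment $\{(x_1,x_2):|x_2|\le \varrho/2\}$. Rescale by $\rho\approx 4|x_1|$ and apply Lemma \ref{L6.2} to $u_\rho$. The point $(x_1,\frac{\sqrt3}{3}x_1+\frac{x_1}{2})$ lies at distance comparable to $|x_1|$ from $\{u^*=0\}$, so after scaling \eqref{con5} gives $u>0$ there. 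Likewise $(x_1,\frac{\sqrt3}{3}x_1-\frac{x_1}{2})$ lies well inside $\{u^*=0\}^\circ$, so \eqref{con6} gives $u=0$ in a neighborhood of it. By continuity there is some $x_2$ in between with $(x_1,x_2)\in\Gamma_u$. For $x_1=0$ we take $x_2=0$.

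\textbf{Uniqueness.} Suppose $(x_1,a)$ and $(x_1,b)$ with $a<b$ both lie in $\Gamma_u\cap B_\varrho$. By Lemma \ref{L6.4}, each such point satisfies $|x_2-\frac{\sqrt3}{3}x_1|\le C|x_1|\,\|u_{4|x_1|}-u^*\|^{1/3}$, so both lie within $o(|x_1|)$ of the line. In particular, their distance $b-a$ is $o(|x_1|)$. Now apply Lemma \ref{L7.4}, in the rescaled form \eqref{we26}--\eqref{we27} from the proof of Lemma \ref{L7.5}, at the point $x_0=(x_1,a)$. This gives $u>0$ on $\{x_0+te:e\in C_\delta,\ t>0\}\cap B_{4|x_0|r}(x_0)$. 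Since $e_2\in C_\delta$, $u>0$ on a vertical segment above $x_0$ of length $4|x_0|r$. If $b-a<4|x_0|r$, then $(x_1,b)$ lies on this segment, so $u(x_1,b)>0$, contradicting $(x_1,b)\in\Gamma_u$.

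\textbf{Main obstacle.} The hard step is comparing scales. Lemma \ref{L7.5} takes $r=\|u_{4|x_0|}-u^*\|^{1/3}_{C^1}$, so the cone has radius of order $|x_0|\,\|u_{4|x_0|}-u^*\|^{1/3}$. Lemma \ref{L6.4} only bounds the separation $b-a$ by $2C|x_0|\,\|u_{4|x_0|}-u^*\|^{1/3}$, which is the same order, so the constants may not close the argument. I would first try to avoid this comparison altogether. The idea is to exploit that $\partial_{e_2}u\ge0$ holds on the whole rescaled ball $B_r(y_0)$ by Lemma \ref{L7.3}. Then $u$ is nondecreasing upward along the vertical line in a full neighborhood, so that line can meet $\Gamma_u$ only once near each crossing. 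Concretely, I would chain the monotonicity: cover the segment between $a$ and $b$ by such balls centered at the free boundary points found in between. Alternatively, I would run Lemma \ref{L7.3} with a fixed $r$ (say $r=c\delta/2$ with $\delta$ fixed, e.g. $\delta=1/2$) instead of the shrinking one. This is allowed once $\|u_{4|x_0|}-u^*\|^{1/3}\le c\delta/2$, and it yields a cone of radius comparable to $|x_0|$. That radius dominates the $o(|x_0|)$ separation from Lemma \ref{L6.4} for $\varrho$ small, which closes the contradiction.
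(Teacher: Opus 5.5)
Your proof is correct once you commit to the second alternative in your last paragraph. That alternative is exactly the parameter choice the paper makes: it applies Lemma \ref{L7.4} at $x/(4|x|)$ with the fixed values $\delta=\frac12$ and $r=\frac c4$, so both cones have height $c|x|$, and $e_2\in C_{1/2}$ because $e_2\cdot e^*=\frac{\sqrt3}{2}$. Your first version, with $r=\|u_{4|x_0|}-u^*\|^{1/3}_{C^1}$, indeed does not close: the constant in Lemma \ref{L6.4} is $4\varepsilon^{-1/3}$, which is large. The chaining idea is also unnecessary.

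Where you genuinely differ from the paper is the closing step of uniqueness.
\begin{itemize}
\item Your route takes two crossings $(x_1,a)$ and $(x_1,b)$ with $a<b$. Lemma \ref{L6.4}, in the $|x_1|$-form obtained as in the proof of Lemma \ref{L8.2} (which does not use the present lemma), gives $b-a=o(|x_1|)$. The positive cone of height $c|x_0|\ge c|x_1|$ at the lower point then already contains $(x_1,b)$.
\item The paper never uses the proximity estimate. It takes the first zero $\underline s$ above a free boundary point $x$; the positive cone at $x$ gives $\underline s>x_2$. The zero cone at $(x_1,\underline s)$ then forces zeros just below $\underline s$, a contradiction. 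A symmetric sup argument handles the region below $x$.
\end{itemize}
Your argument is shorter and one-sided, since it needs only the positive cone. The price is the quantitative localization from Lemma \ref{L6.4}, plus a check that for $x_1>0$ the nearest part of $\Gamma_{u^*}$ is the ray $\theta=\frac\pi6$, so that $d(x,\Gamma_{u^*})=\frac{\sqrt3}{2}|x_2-\frac{\sqrt3}{3}x_1|$. The paper's argument is purely qualitative and would work with cones of any fixed relative size, but it needs both cones and the inf/sup construction.

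Your existence argument is essentially the paper's. You place the test points at scale $|x_1|$ rather than at heights $\frac34\varrho$ and $-\frac14\varrho$; both choices work with Lemma \ref{L6.2}.

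One small omission: for $x_1=0$ you only give existence. Uniqueness there cannot come from Lemma \ref{L7.4}, which requires the rescaled point to lie in $\{x_1>0\}$. It does follow directly from Lemma \ref{L6.4}: any $(0,x_2)\neq 0$ is at distance at least $\frac{\sqrt3}{2}|x_2|$ from $\Gamma_{u^*}$, which is impossible for small $|x_2|$. The paper instead uses Lemma \ref{L6.2} to show that $u>0$ on the positive $x_2$-axis and that the negative $x_2$-axis lies in $\{u=0\}^\circ$.
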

\begin{proof}
Case $(\romannumeral 1)$: $x_1=0$. \\
\noindent For any $x=(0,x_2)\in B_{\varrho}\cap \{x_2>0 \}$, taking $\widetilde{x}=\frac{x}{2|x|}$, we have $d(\widetilde{x}, \{u^*= 0\})=\frac{\sqrt{3}}{4}$.  Hence, one can see that if $\varrho$  is small enough, we obtain \begin{equation*}
\varepsilon^{-1}|| u_{2|x|}-u^* ||_{L^\infty (B_1)}<d^2(\widetilde{x}, \{u^*= 0\})(|\widetilde{x}|+d(\widetilde{x}, \{u^*= 0\}))=\frac{6+3\sqrt{3}}{64},
\end{equation*}
where $\varepsilon>0$ is as in Lemma \ref{L6.2}. By using \eqref{con5}, we have $\widetilde{x}\in \{u_{2|x|}>0 \}$. Rescaling back to $u$, we have $u(x)>0$. Similarly, for any $x=(0,x_2)\in B_{\varrho}\cap \{x_2<0 \}$, taking $\widehat{x}=\frac{x}{4|x|}$, we have $d(\widehat{x}, \{u^*> 0\})=\frac{1}{4}$, and hence by \eqref{con6} we obtain $\widehat{x}\in \{u_{4|x|}= 0\}^\circ$, which implies $x\in \{u= 0\}^\circ$. Therefore, for $x_1=0$, there exists a unique $x_2=0$ such that $x=(0,x_2)\in \Gamma_u\cap B_{\varrho}$. \\
\indent Case $(\romannumeral 2)$: $0<x_1<\frac{\varrho}{4}$. We split the proof into two steps, namely the existence and uniqueness of $x_2$.\\
\indent \textbf{Claim 1.}  For any  $x_1\in (0,\frac{\varrho}{4})$, there exists a $x_2$ such that $x=(x_1,x_2)\in \Gamma_u\cap B_{\varrho}$. \\
Fix  $x_1\in(0, \frac{\varrho}{4})$, using the continuity of the $u(x_1,x_2)$ with respect to $x_2$,  it suffices to show that there exist two constants $s_1< s_2$ with $(x_1,s_1 ), (x_1,s_2 )\in B_{\varrho}$ such that $(x_1,s_1 )\in {\{u= 0\}}^\circ$ and $u(x_1,s_2 )>0$. Now, taking $\overline{x}=(\frac{x_1}{\varrho}, \frac{3}{4})$ and $\underline{x}=(\frac{x_1}{\varrho}, -\frac{1}{4})$.  It is easy to see that $|\overline{x}|<1, |\underline{x}|<\frac{1}{2}$. By a simple calculation, we obtain $d(\overline{x}, {\{u^*= 0\}} )=\frac{\sqrt{3}}{2}|\frac{\sqrt{3}x_1}{3\varrho}-\frac{3}{4}|> \frac{3\sqrt{3}-1}{8}$. Hence if $\varrho$ is small enough, we have $\varepsilon^{-1} d^2(\overline{x}, {\{u^*= 0\}} )(|\overline{x}|+d(\overline{x}, {\{u^*= 0\}} ))>||u_\varrho-u^* ||_{L^\infty (B_1)}$, where $\varepsilon$  is as in Lemma \ref{L6.2}.  Applying \eqref{con5}, one can arrive at $u_\varrho(\overline{x})>0$. Rescaling back to $u$, we have  $u(x_1,\frac{3}{4}\varrho)>0$. Similarly, we have $d(\underline{x}, {\{u^*> 0\}} )\ge \frac{\sqrt{3}}{2}| \frac{\sqrt{3}x_1}{3\varrho }+\frac{1}{4} |\ge \frac{\sqrt{3}}{8}$. Hence if $\varrho$ is small enough, by using \eqref{con6}, we have $\underline{x}\in {\{u_\varrho= 0\}}^\circ$.  Rescaling back to $u$, we have  $(x_1,-\frac{1}{4}\varrho)\in {\{u= 0\}}^\circ$. Finally, choosing $s_1=-\frac{1}{4}\varrho$  and $s_2=\frac{3}{4}\varrho$, we complete the proof of the claim.\\
\indent \textbf{Claim 2.} For any  $x_1\in (0,\frac{\varrho}{4})$, there exists a unique $x_2$ such that $x=(x_1,x_2)\in \Gamma_u\cap B_{\varrho}$. \\
Let $x=(x_1,x_2)\in \Gamma_u\cap B_{\varrho}\cap \{0<x_1<  \frac{\varrho}{4} \} $.
Define $x':=\frac{x}{4|x|}\in \Gamma_{u_{4|x|}}\cap\partial B_{\frac{1}{4}}\cap\{x'_1>0\}$. Suppose  $c>0$ be a small constant. Then there exists $\varrho$ small enough such that  $||u_{4|x|}-u^* ||^{\frac{1}{3}}_{C^1 (B_1)}+r\le \frac{c}{2}$, where $r=\frac{c}{4}$. With the aid of Lemma \ref{L7.4}, we have
\begin{equation*}
u_{4|x|}=0 \quad in\quad \Theta^{1}_{x'}=\{y\in B_r(x'):y=x'-te \,\ with \,\ e\in C_{\frac{1}{2}},\, t>0\},
\end{equation*}
\begin{equation*}
u_{4|x|}>0 \quad in\quad \Theta^{2}_{x'}=\{y\in B_r(x'):y=x'+te \,\ with \,\ e\in C_{\frac{1}{2}},\, t>0\}.
\end{equation*}
Rescaling back to $u$, one can see that
\begin{equation}\label{L8L1}
u=0 \quad in\quad \Theta^{1}_{x}=\{z\in B_{4|x|r}(x):z=x-4|x|te \,\ with \,\ e\in C_{\frac{1}{2}},\, t>0\},
\end{equation}
\begin{equation}\label{L8L2}
u>0 \quad in\quad \Theta^{2}_{x}=\{z\in B_{4|x|r}(x):z=x+4|x|te \,\ with \,\ e\in C_{\frac{1}{2}},\, t>0\}.
\end{equation}
For any $x_1\in (0,\frac{\varrho}{4})$, assume by contradiction that there exists a point  $(x_1, a)\in B_{\varrho}$ with $a>x_2$ such that $u(x_1, a)=0$. Taking $\underline{s}=\inf\{ a>x_2:  (x_1, a)\in \cap B_{\varrho}, u(x_1, a)=0 \}$. From \eqref{L8L2}, we have $u>0 \,\ in \,\ \Theta^{2}_{x}$. Therefore, we have  $\underline{s}>x_2$ and
\begin{equation}\label{gr1}
u(x_1,s)>0\quad\textrm{for all}\quad x_2< s<\underline{s}.
\end{equation}
Define $y:=(x_1,\underline{s})$.  Note that we also have $y\in \Gamma_u\cap B_{\varrho}$. Hence by the same argument as above shows that
\begin{equation*}
u=0 \quad in\quad \Theta^{1}_{y}=\{z\in B_{4|y|r}(y):z=y-4|y|te \,\ with \,\ e\in C_{\frac{1}{2}},\, t>0\}.
\end{equation*}
This implies that there exists $s_0>0$ satisfying  $\max\{x_2, \underline{s}-4|y|r \}<s_0<\underline{s}$ such that $u(x_1,s_0)=0$,  which contradicts to \eqref{gr1}.
Finally, we argue by contradiction once more. Fix $x_1\in (0,\frac{\varrho}{4})$ and suppose there exists a point
$(x_1, b)\in B_{\varrho}$ with $b<x_2$ such that $(x_1, b)\in \overline{\{u(x_1, \cdot)>0\}}$. Let $\overline{s}=\sup\{b<x_2: (x_1, b)\in B_{\varrho}, \, and  \,\ (x_1, b)\in  \overline{\{u(x_1, \cdot)>0\}} \}$.  Similarly, from \eqref{L8L1}, we have $\overline{s}<x_2$. Thus we have
\begin{equation}\label{gr2}
(x_1,s)\in \{u(x_1, \cdot)=0\}^\circ  \quad\textrm{for all}\quad \overline{s}<s<x_2.
\end{equation}
Define $y':=(x_1, \overline{s})\in \Gamma_u\cap B_{\varrho}$. Repeating now the same argument as the proof of \eqref{L8L2}, we have
\begin{equation*}
u>0 \quad in\quad \Theta^{2}_{y'}=\{z\in B_{4|y'|r}(y'):z=y'+4|y'|te \,\ with \,\ e\in C_{\frac{1}{2}},\, t>0\}.
\end{equation*}
Hence, there exists $r_0\in \left(\overline{s}, \min\{\overline{s}+4|y'|r, x_2 \}  \right)$ such that $u(x_1, r_0)>0$. This contradicts to \eqref{gr2}.
This completes the proof of the uniqueness of $x_2$.
\end{proof}

Now, we give an immediate corollary of Lemma \ref{L8.1}.
\begin{cor}\label{Cor8.1}
Let $u$ be a solution to \eqref{f6}. Suppose $0$ is a regular point. Then there exist a small constant $\varrho>0$ and a function $g\in C([0, \frac{\varrho}{4}))$ such that
\begin{equation*}
\Gamma(u)\cap B_{\varrho}\cap\{x: 0\le x_1<\frac{\varrho}{4} \}=\{x\in B_{\varrho} :x_2=g(x_1) \}.
\end{equation*}
\end{cor}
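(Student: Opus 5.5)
Corollary \ref{Cor8.1} follows from Lemma \ref{L8.1} by defining $g$ pointwise and then checking continuity. Take $\varrho$ as in Lemma \ref{L8.1}. For each $x_1\in[0,\frac{\varrho}{4})$, Lemma \ref{L8.1} gives a unique $x_2$ with $(x_1,x_2)\in\Gamma_u\cap B_\varrho$, and we set $g(x_1):=x_2$. Case (i) of that proof gives $g(0)=0$. With this definition the set identity
\begin{equation*}
\Gamma(u)\cap B_{\varrho}\cap\{x: 0\le x_1<\tfrac{\varrho}{4}\}=\{x\in B_\varrho: x_2=g(x_1)\}
\end{equation*}
holds: every point of the left side lies on the graph by uniqueness, and every graph point lies in $\Gamma_u\cap B_\varrho$ by existence. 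The only substantive point is continuity of $g$.

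For continuity, first note that $g$ is bounded: the proof of Lemma \ref{L8.1} (Claim 1) shows $g(x_1)\in(-\frac{\varrho}{4},\frac{3\varrho}{4})$. The sharper control is that $(x_1,s)\in\{u=0\}^\circ$ for $s\le-\frac14\varrho$ and $u(x_1,s)>0$ for $s\ge\frac34\varrho$, uniformly in $x_1$. Now fix $t\in[0,\frac{\varrho}{4})$ and a sequence $t_k\to t$. Any limit point $s^*$ of $g(t_k)$ lies in the closed strip, and since $\Gamma_u$ is closed in $B_1$, $(t,s^*)\in\Gamma_u$.

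To conclude $s^*=g(t)$ from uniqueness, I must check that $(t,s^*)\in B_\varrho$ and not only in $\overline{B_\varrho}$. The argument is a compactness one: every limit point equals $g(t)$, so the whole sequence converges to $g(t)$.

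\textbf{Main obstacle.} Ruling out accumulation at $\partial B_\varrho$ is where I expect the difficulty. I would handle it by shrinking: run Lemma \ref{L8.1} with a larger radius $\varrho'$ (any admissible smaller choice works), and then restrict to $\varrho=\varrho'/2$. Free boundary points in $B_\varrho$ with $|x_1|<\varrho/4$ are then interior to $B_{\varrho'}$, so limits stay inside the region where uniqueness holds.

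An alternative that gives the bound directly: by Lemma \ref{L6.4}, every free boundary point $x$ near $0$ lies within $C|x|\,\|u_{4|x|}-u^*\|^{1/3}$ of the ray $\theta=\frac{\pi}{6}$. Hence $|g(x_1)-\frac{\sqrt3}{3}x_1|=o(|x_1|)$, which keeps the graph well inside $B_\varrho$ for small $\varrho$ and also gives continuity at $0$. Continuity at $x_1>0$ can alternatively be read off from the cone conditions \eqref{L8L1}–\eqref{L8L2}, which show the graph is locally Lipschitz. Either route closes the argument.
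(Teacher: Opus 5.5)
Your argument is correct, but it takes a different route from the paper's.

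\textbf{How the paper proves continuity.} It traps $g$. For fixed $y$ and small $\sigma$, it takes from the proof of Lemma \ref{L8.1} that $u(y,g(y)+\sigma)>0$ and $(y,g(y)-\frac{\sigma}{2})\in\{u=0\}^\circ$, and extends both facts to small balls. It then uses the one-sided structure established in the uniqueness part of that proof ($u>0$ above the graph, interior zero set below it) to get $g(y)-\frac{\sigma}{2}\le g(x_1)<g(y)+\sigma$ for nearby $x_1$.

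\textbf{How your argument differs.} You use a closed-graph argument instead. The values of $g$ lie in a compact interval, $\Gamma_u$ is relatively closed in $B_1$, and Lemma \ref{L8.1} gives uniqueness on each vertical line. Hence every subsequential limit of $g(t_k)$ equals $g(t)$. This needs only closedness of $\Gamma_u$ together with existence and uniqueness, and it never uses which side of the graph is positive. The paper's route uses more information, namely the sign of $u$ on both sides. In exchange it shows directly that $\{u>0\}$ is locally the supergraph of $g$, which is the geometric picture used later.

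\textbf{The ``main obstacle'' is not actually there.} Your own bound $g(x_1)\in(-\frac{\varrho}{4},\frac{3\varrho}{4})$ puts any limit point in $s^*\in[-\frac{\varrho}{4},\frac{3\varrho}{4}]$. Since $0\le t<\frac{\varrho}{4}$, this gives $|(t,s^*)|^2<\frac{\varrho^2}{16}+\frac{9\varrho^2}{16}$, that is, $|(t,s^*)|<\frac{\sqrt{10}}{4}\varrho<\varrho$. So $(t,s^*)\in\Gamma_u\cap B_\varrho$, uniqueness applies at once, and you need neither the radius-shrinking device nor Lemma \ref{L6.4}.

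If you did keep the shrinking step, note that Lemma \ref{L8.1} as stated asserts only the existence of one $\varrho$. You would be relying on the fact that its proof works for every sufficiently small radius, which it does. You would also need to check that $g$ defined with radius $\varrho'/2$ agrees with $g$ defined with radius $\varrho'$; uniqueness in the larger ball gives this.
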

\begin{proof}
 Let $x=(x_1,x_2)\in \Gamma(u)\cap B_{\varrho}\cap\{x: 0\le x_1<\frac{\varrho}{4} \}$. By Lemma \ref{L8.1}, there exists  a function  $g(x)$ such that
\begin{equation*}
x_2=\left\{
\begin{aligned}
&\,g(0), \quad  \,\,\,\,  x_1=0,\\
&g(x_1), \quad  \,\,0<x_1<\frac{\varrho}{4},
\end{aligned}
\right.
\end{equation*}
where $g(0)=0$. It remains to show that $g\in C([0, \frac{\varrho}{4}))$.

Fix $y\in [0, \frac{\varrho}{4})$. Let $\sigma>0$ be small enough. According  the proof of Lemma \ref{L8.1}, we have $u(y,g(y)+\sigma)>0$ and  $u(y,g(y)-\frac{\sigma}{2})=0$. By the continuity of $u$, one can see that there exist $\delta_1, \delta_2>0$ such that $u>0$ in $B_{\delta_1}(y,g(y)+\sigma)$ and
$u=0$ in $B_{\delta_2}(y,g(y)-\frac{\sigma}{2})$. Hence, we have
\begin{equation}\label{gr3}
u(x_1,g(y)+\sigma)>0 \quad\textrm{for all}\quad |x_1-y|<\delta_1,
\end{equation}
\begin{equation}\label{gr4}
u(x_1,g(y)-\frac{\sigma}{2})=0 \quad\textrm{for all}\quad |x_1-y|<\delta_2.
\end{equation}
(Here if $y=0$ we assume $x_1\ge0$). Using  Lemma \ref{L8.1} again, it follows from \eqref{gr3} that $g(x_1)<g(y)+\sigma$.
Similarly, we derive from \eqref{gr4} that $g(x_1)\ge g(y)-\frac{\sigma}{2}>g(y)-\sigma$. Therefore, for all $\sigma>0$ sufficiently small,  we have $|g(x_1)-g(y)|<\sigma$ for $|x_1-y|<\delta$, where $\delta=\min\{\delta_1,\delta_2 \}$. This completes the proof of Corollary \ref{Cor8.1}.
\end{proof}

Now, we provide a more refined characterization for the convergence of the free boundary.
\begin{lem}\label{L8.2}
Let $u$ be any solution to \eqref{f6}.  Suppose $0$ is a regular point, and
$x\in \Gamma(u)\cap B_{\varrho}\cap\{x: 0\le x_1<\frac{\varrho}{4}\}$.
Then there exist $C>0$ such that
\begin{equation}\label{gr5}
|g(x_1)-\frac{\sqrt{3}}{3}x_1|\le C||u_{8|x_1|-u^*} ||^{\frac{1}{3}}_{L^\infty (B_1)}|x_1|.
\end{equation}
\end{lem}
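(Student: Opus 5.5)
The plan is to combine Lemma \ref{L6.4} with the graph structure from Corollary \ref{Cor8.1}. The typo $\|u_{8|x_1|-u^*}\|$ in \eqref{gr5} will be read as $\|u_{8|x_1|}-u^*\|$.

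\textbf{Step 1: the easy endpoint.} For $x_1=0$ we have $g(0)=0$ by Corollary \ref{Cor8.1}, so \eqref{gr5} holds trivially. From now on I fix $0<x_1<\varrho/4$ and set $x=(x_1,g(x_1))\in\Gamma_u$.

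\textbf{Step 2: localize the point.} I first show that $x$ lies in a fixed cone around the ray $\Gamma^1_{u^*}=\{\theta=\pi/6\}$. By Case (i) of Lemma \ref{L8.1} and its rescaled version (the cone-separation argument via \eqref{con5} and \eqref{con6}), points with $|\theta-\pi/6|$ bounded below lie in $\{u>0\}$ or in $\{u=0\}^\circ$. The relevant range is $\theta\in(\pi/6,\pi/2]$ or $\theta\in[-\pi/2,\pi/6)$ away from $\pi/6$. Hence, for $\varrho$ small, $|g(x_1)|\le x_1$, say. This gives
\begin{equation*}
x_1\le|x|\le\sqrt2\,x_1,
\end{equation*}
so $4|x|\le 8x_1$. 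It also shows that the nearest point of $\Gamma_{u^*}$ to $x$ lies on the ray $\{\theta=\pi/6\}$, since the other ray $\{\theta=5\pi/6\}$ is at distance comparable to $|x|$.

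\textbf{Step 3: apply Lemma \ref{L6.4} at the right scale.} The statement is phrased with scale $8|x_1|$ rather than $4|x|$, so I redo the argument of Lemma \ref{L6.4} at scale $s=8x_1$. Since $|x|\le\sqrt2 x_1<4x_1=s/2$, Lemma \ref{L6.3} applied to $u_s$ gives
\begin{equation*}
d^2(x,\Gamma_{u^*})\bigl(|x|+d(x,\Gamma_{u^*})\bigr)\le s^3\varepsilon^{-1}\|u_s-u^*\|_{L^\infty(B_1)}.
\end{equation*}
This yields
\begin{equation*}
d(x,\Gamma_{u^*})\le C x_1\|u_{8x_1}-u^*\|^{1/3},
\end{equation*}
using $|x|\ge x_1$ and the homogeneity identity \eqref{con17}. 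Smallness of $\|u_{8x_1}-u^*\|$ is ensured by Theorem \ref{TH1.2un}, which gives convergence of the rescalings; here $u^*$ is the unique blowup, with the rotation from \eqref{we1}.

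\textbf{Step 4: convert distance to the ray into vertical distance.} The line $x_2=\frac{\sqrt3}{3}x_1$ makes angle $\pi/6$ with the horizontal. By Step 2, the distance from $x$ to $\Gamma_{u^*}$ equals the distance to this line. That distance is
\begin{equation*}
\frac{\sqrt3}{2}\Bigl|g(x_1)-\frac{\sqrt3}{3}x_1\Bigr|.
\end{equation*}
There is one caveat: when the projection onto the line falls at negative parameter, the distance is to the origin instead. This cannot happen, since $x_1>0$ and $|g|\le x_1$ keep the projection parameter positive. Combining with Step 3 gives \eqref{gr5}.

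The main obstacle is Step 2: ensuring that $|x|$ is comparable to $x_1$ and that the nearest point of $\Gamma_{u^*}$ is on the correct ray. I would handle it by running the \eqref{con5}/\eqref{con6} argument of Lemma \ref{L8.1} on the rescaled points $x/(4|x|)$.
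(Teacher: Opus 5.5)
Your proof is correct and uses the same core as the paper's: the cube-root closeness of $\Gamma_u$ to $\Gamma_{u^*}$ from Lemma \ref{L6.3} and Lemma \ref{L6.4}, followed by converting the distance to the line $x_2=\frac{\sqrt3}{3}x_1$ into the vertical deviation $\frac{\sqrt3}{2}\bigl|g(x_1)-\frac{\sqrt3}{3}x_1\bigr|$. The differences are only bookkeeping. The paper applies Lemma \ref{L6.4} at scale $4|x|$, bootstraps that bound to get $|x|\le 2|x_1|$, and then passes to scale $8|x_1|$ implicitly via the homogeneity of $u^*$. You instead localize $x$ a priori by cone separation and apply Lemma \ref{L6.3} directly at scale $8x_1$, which also makes explicit why the nearest point of $\Gamma_{u^*}$ lies on the ray $\theta=\pi/6$ (the paper asserts this without comment).
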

\begin{proof}
 Let $x\in \Gamma(u)\cap B_{\varrho}\cap\{x: 0\le x_1<\frac{\varrho}{4}\}$. Then we have $d(x, \Gamma_{u^*})=\frac{\sqrt{3}}{2}|\frac{\sqrt{3}}{3}x_1-x_2|$. Therefore, by Lemma \ref{L6.4}, one can derive that there exists $C_1>0$ such that
\begin{equation}\label{gr6}
|\frac{\sqrt{3}}{3}x_1-x_2|\le C_1 ||u_{4|x|-u^*} ||^{\frac{1}{3}}_{L^\infty (B_1)}|x|.
\end{equation}
On the other hand, we have
\begin{equation}\label{gr7}
\begin{split}
|x|&\le |x_1|+|x_2|\\
&\le (1+\frac{\sqrt{3}}{3})|x_1|+|x_2-\frac{\sqrt{3}}{3}x_1|.
\end{split}
\end{equation}
Note that  if $\varrho>0$ is small enough, we have
\begin{equation}\label{gr8}
C_1||u_{4|x|-u^*} ||^{\frac{1}{3}}_{L^\infty (B_1)}\le \frac{1}{2}.
\end{equation}
Combining \eqref{gr6}, \eqref{gr7} and \eqref{gr8} we obtain
\begin{equation}\label{gr9}
|x_2-\frac{\sqrt{3}}{3}x_1|\le C_2||u_{4|x|-u^*} ||^{\frac{1}{3}}_{L^\infty (B_1)}|x_1|.
\end{equation}
If  $\varrho>0$ is small enough, we obtain $C_2||u_{4|x|-u^*} ||^{\frac{1}{3}}_{L^\infty (B_1)}\le 1-\frac{\sqrt{3}}{3}$. This together with \eqref{gr7} and \eqref{gr9} yields
\begin{equation}\label{gr10}
|x|\le 2|x_1|.
\end{equation}
Now, putting \eqref{gr9} and \eqref{gr10} together, we obtain
\begin{equation*}
|x_2-\frac{\sqrt{3}}{3}x_1|\le C_2||u_{8|x_1|-u^*} ||^{\frac{1}{3}}_{L^\infty (B_1)}|x_1|,
\end{equation*}
which together the fact $x_2=g(x_1)$ implies the desired conclusion.
\end{proof}

Finally, based on the normal convergence of the free boundary, we  derive the following estimate for the slope deviation.
\begin{lem}\label{L8.3}
Let $u$ be any solution to \eqref{f6}. Suppose $0$ is a regular point and $x\in \Gamma(u)\cap B_{\varrho}\cap\{x: 0\le x_1<\frac{\varrho}{4}\}$. Then $g\in C^1((0, \frac{\varrho}{4}))$ and
\begin{equation*}
|g'(x_1)-\frac{\sqrt{3}}{3}|\le C||u_{8|x_1|-u^*} ||^{\frac{1}{3}}_{L^\infty (B_1)}|x_1|.
\end{equation*}
\end{lem}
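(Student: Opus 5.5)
The plan is to derive the slope estimate from the normal convergence in Lemma \ref{L7.6}, using the graph structure from Lemma \ref{L8.1} and Corollary \ref{Cor8.1}. Then, following Lemma \ref{L8.2}, I would convert the scale $4|x|$ into $8|x_1|$. The factor $|x_1|$ is where I am least sure of the argument; the last paragraph says what I would try there.

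\textbf{Step 1: $C^1$ regularity.} Fix $x=(x_1,g(x_1))\in\Gamma(u)\cap B_\varrho$ with $0<x_1<\frac{\varrho}{4}$. By Lemma \ref{L7.5}, after shrinking $\varrho$, $x$ is a usual regular point. Since $|x|>0$ we have $f(x)=|x|>0$ near $x$, so Caffarelli's theorem \cite{Caf1,Caf2} applies: $\Gamma_u$ is a $C^1$ curve in a neighbourhood of $x$. By Lemma \ref{L8.1}, $\Gamma_u$ meets each vertical line $\{x_1=\mathrm{const}\}$ in $B_\varrho$ exactly once. Moreover, the cones $\Theta^1_x,\Theta^2_x$ in \eqref{L8L1}--\eqref{L8L2} have axis $e^*$, which is transversal to the vertical direction. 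Hence this $C^1$ curve is the graph of $g$ near $x_1$. This gives $g\in C^1((0,\frac{\varrho}{4}))$, and the upward unit normal is
\[
e_x=\frac{(-g'(x_1),1)}{\sqrt{1+g'(x_1)^2}} .
\]

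\textbf{Step 2: slope from the normal.} The vector $e^*=(-\frac12,\frac{\sqrt3}{2})$ is exactly $e_x$ evaluated at slope $\frac{\sqrt3}{3}$. The map $s\mapsto (-s,1)/\sqrt{1+s^2}$ is a diffeomorphism onto an open arc of $\mathbb{S}^1$ with inverse smooth near $e^*$. So, as long as $|e_x-e^*|$ is small (which Lemma \ref{L7.6} gives for small $\varrho$),
\[
\Big|g'(x_1)-\tfrac{\sqrt3}{3}\Big|\le C|e_x-e^*|\le C\|u_{4|x|}-u^*\|^{\frac13}_{C^1(B_1)} .
\]
Next I would pass from $4|x|$ to $8|x_1|$ exactly as in Lemma \ref{L8.2}. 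There, \eqref{gr10} gives $|x|\le 2|x_1|$, and $|x_1|\le|x|$ holds trivially. Thus $u_{4|x|}$ is a rescaling of $u_{8|x_1|}$ by the factor $\lambda=\frac{|x|}{2|x_1|}\in[\frac12,1]$, because $u_{4|x|}(y)=\lambda^{-3}u_{8|x_1|}(\lambda y)$. Since $u^*$ is $3$-homogeneous, $\lambda^{-3}u^*(\lambda y)=u^*(y)$, and therefore
\[
\|u_{4|x|}-u^*\|_{C^1(B_1)}\le C\|u_{8|x_1|}-u^*\|_{C^1(B_1)} .
\]
To reach the $L^\infty$ norm in the statement, I would interpolate with the uniform bound $\|u_r\|_{C^{1,1}(B_1)}\le C$ from Lemma \ref{L4.2}, together with the local $C^{1,1}$ bound for $u^*$. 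This gives $\|u_r-u^*\|_{C^1(B_{1/2})}\le C\|u_r-u^*\|^{1/2}_{L^\infty(B_1)}$. To avoid the loss from $B_1$ to $B_{1/2}$, I would enlarge the rescaling radius by a fixed factor, which is harmless by the same homogeneity argument.

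\textbf{Step 3: the factor $|x_1|$ (main obstacle).} Steps 1--2 give a bound on $|g'-\frac{\sqrt3}{3}|$ by a power of $\|u_{8|x_1|}-u^*\|_{L^\infty}$, but with no factor $|x_1|$ and, after interpolation, the exponent $\frac16$ rather than $\frac13$. To recover the exact form, I would use Lemma \ref{L8.2} as a second-order input. Set $\omega(t):=C\|u_{8t}-u^*\|^{1/3}_{L^\infty(B_1)}$. Apply \eqref{gr5} at $x_1$ and at $x_1+h$, and subtract: the difference quotient of $g$ deviates from $\frac{\sqrt3}{3}$ by at most
\[
\frac{\omega(x_1)|x_1|+\omega(x_1+h)|x_1+h|}{h}.
\]
I would then choose $h$ as a fixed small multiple of $|x_1|$ and control $g'$ by this difference quotient. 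For that I would use a $C^{1,\alpha}$ modulus of $g$ on the interval $[x_1,x_1+h]$, taken from the uniform $C^{1,\alpha}$ estimates for usual regular points after rescaling by $|x_1|$. This step is where I expect the real difficulty. The difference quotient naturally produces $\omega$ without the extra $|x_1|$. Gaining $|x_1|$ would require that the rescaled free boundaries $\frac{1}{|x_1|}\Gamma_u$ be close to $\Gamma_{u^*}$ in $C^1$ at rate $\omega\,|x_1|$. I would try to obtain that rate by feeding the energy decay \eqref{uni1}--\eqref{uni2} of Theorem \ref{TH1.2un} into the scale comparison above. If this fails, the argument yields the bound without the factor $|x_1|$.
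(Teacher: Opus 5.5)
Your Steps 1 and 2 are the paper's argument, and they are sound. Lemma \ref{L7.5} and the classical theory give a local $C^1$ curve, and Lemma \ref{L7.6} gives $|e_x-e^*|\le C\|u_{4|x|}-u^*\|^{1/3}_{C^1(B_1)}$. Inverting $s\mapsto(-s,1)/\sqrt{1+s^2}$ near $e^*$ then gives $|g'(x_1)-\frac{\sqrt3}{3}|\le C\|u_{4|x|}-u^*\|^{1/3}_{C^1(B_1)}$. Two of your steps are more explicit than the paper and both are correct: you identify that curve with the graph of $g$ via Lemma \ref{L8.1}, and you change scale by $u_{4|x|}(y)=\lambda^{-3}u_{8|x_1|}(\lambda y)$ with $\lambda=\frac{|x|}{2|x_1|}$, using the homogeneity of $u^*$. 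You also do not need to enlarge the radius before interpolating: by \eqref{gr5}, $\lambda\to\frac{1}{\sqrt3}$, so $B_\lambda$ stays well inside $B_1$.

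The genuine gap is that this is not the displayed inequality. You get exponent $\frac16$ on the $L^\infty$ norm and no factor $|x_1|$, and Step 3 cannot repair this. A difference quotient over $h\sim|x_1|$ cancels the $|x_1|$ in \eqref{gr5}, as you observe. The energy decay of Theorem \ref{TH1.2un} is only an upper bound $\|u_r-u^*\|\lesssim r^{\alpha}$. It cannot turn $C\|u_{8|x_1|}-u^*\|^{1/6}$ into $C\|u_{8|x_1|}-u^*\|^{1/3}|x_1|$, since that would require $\|u_{8|x_1|}-u^*\|^{-1/6}\lesssim|x_1|$, which is impossible as $x_1\to0$ unless $u=u^*$ near $0$. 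More structurally, every input you use (Lemmas \ref{L6.4}, \ref{L7.3}--\ref{L7.6}, \ref{L8.2}) is applied to rescalings $u_r$ with universal constants. So it controls the dimensionless quantity $g'(x_1)-\frac{\sqrt3}{3}$ only through $\|u_{c|x_1|}-u^*\|$. An extra power of $|x_1|$ would need a genuinely new, sharper estimate, not bookkeeping.

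For comparison, the paper's proof ends at exactly the bound your Steps 1--2 produce, $|g'(x_1)-\frac{\sqrt3}{3}|\le C_2\|u_{4|x|}-u^*\|^{1/3}_{C^1(B_1)}$. It then asserts the conclusion ``with the help of \eqref{gr10}''. But \eqref{gr10}, i.e.\ $|x|\le 2|x_1|$, only justifies the passage $4|x|\to 8|x_1|$ that you carried out. It supplies neither the $L^\infty$ norm at exponent $\frac13$ nor the factor $|x_1|$. That factor looks carried over from \eqref{gr5}, where it is the natural length factor.

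So what your argument and the paper's actually establish is $g\in C^1((0,\frac{\varrho}{4}))$ with $|g'(x_1)-\frac{\sqrt3}{3}|\le C\|u_{8|x_1|}-u^*\|^{1/3}_{C^1(B_1)}$. Interpolating on $B_\lambda$ as you propose then gives $\le C\|u_{8|x_1|}-u^*\|^{1/6}_{L^\infty(B_1)}$. You should either present that as the result or flag the stated form (also used in Theorem \ref{TH1.4}) as unproved.
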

\begin{proof}
 On one hand, by Lemma \ref{L7.5}, any $x\in \Gamma_u\cap B_\varrho\cap \{ x_1>0\}$ is a usual regular point. Therefore, as known from the classical theory \cite{Pet}, $\Gamma_u$ is $C^1$ curve in a sufficiently small neighborhood of $x$.   On the other hand,  from Lemma \ref{L7.6}, if $\varrho$ is small enough, we have $e_x\to e^*$ with $e^*=(-\frac{1}{2},\frac{\sqrt{3}}{2})$. Hence $e^*$ exists with  $e^*=\frac{(-g'(x_1),1)}{\sqrt{1+(g'(x_1) )^2  }}$. By a simple calculation, one can see that $ |e_x- e^* |\ge\sqrt{C_1}|g'(x_1)-\frac{\sqrt{3}}{3}  | $ for any $C_1\in(0,3)$ provided that $\varrho$ is small enough. This together with Lemma \ref{L7.6} gives $|g'(x_1)-\frac{\sqrt{3}}{3}  |\le C_2 ||u_{4|x|}-u^* ||^{\frac{1}{3}}_{C^1 (B_1)} $. With the help of  \eqref{gr10}, one can prove the desired conclusion.
\end{proof}

\begin{proof}[Proof of Theorem \ref{TH1.4}.]
Invoke Corollary \ref{Cor8.1}, Lemma \ref{L8.2} and  Lemma \ref{L8.3}, one can deduce Theorem \ref{TH1.4}.
\end{proof}

\bibliographystyle{alpha}

\end{document}